\def\Z{\mathbb{Z}}
\def\R{\mathbb{R}}
\def\N{\mathbb{N}}
\def\epsilon{\varepsilon}
\def\hat{\widehat}
\def\tilde{\widetilde}
\newcommand{\me}{\mathrm{e}}
\newcommand{\SE}{\setcounter{equation}{0} \section}
\newcommand{\be}{\begin{equation}}
\newcommand{\ee}{\end{equation}}
\newcommand{\baa}{\begin{array}}
\newcommand{\eaa}{\end{array}}
\newcommand{\ba}{\begin{eqnarray}}
\newcommand{\ea}{\end{eqnarray}}
\numberwithin{equation}{section}
\newtheorem{theo}{\bf Theorem}[section]
\newtheorem{lem}[theo]{\bf Lemma}
\newtheorem{pro}[theo]{\bf Proposition}
\newtheorem{rem}[theo]{\bf Remark}
\newcommand{\Rd}{\color{red}}
\begin{document}
\date{\today}
\title[Spreading in space-time periodic media, Part 1]{Spreading in space-time periodic media governed by\\ a monostable  equation with free boundaries,\\ Part 1:  Continuous initial functions} 
\thanks{This work was supported by the Australian Research Council, the National Natural Science Fundation of China (11171319, 11371117) and the Fundamental Research Funds for the Central Universities. }
\author[w. Ding, Y. Du and X. Liang]{Weiwei Ding$^\dag$, Yihong Du$^\dag$ and Xing Liang$^\ddag$}
\thanks{
$^\dag$ School of Science and Technology, University of New England, Armidale, NSW 2351, Australia}
\thanks{$^\ddag$ School of Mathematical Sciences, University of Science and Technology of China, Hefei, Anhui, 230026, P.R. China}

\keywords{free boundary, space-time periodic media, spreading-vanishing dichotomy}

\begin{abstract}
We aim to classify the long-time behavior of the solution to a free boundary problem with monostable reaction term in space-time periodic media. Such a model may be used to describe the spreading of a new or invasive species, with the free boundary representing the expanding front. 
In time-periodic and space homogeneous environment, as well as in space-periodic and time autonomous  environment,
such a problem has been studied recently in \cite{dgp, dl}. In both cases, a spreading-vanishing dichotomy has been established,
and when spreading happens, the asymptotic spreading speed is proved to exist by making use of the corresponding semi-wave solutions. The approaches in \cite{dgp, dl} seem difficult to apply to the current situation where the environment is periodic in both space and time. Here we take a different approach, based on the methods developed by Weinberger \cite{w1, w2} and others \cite{fyz,lyz,lz1,lz2,lui}, which yield the existence of the spreading speed without using traveling wave solutions.
 In Part 1 of this work, we establish the existence and uniqueness of classical solutions for the free boundary problem with continuous initial data, extending the existing theory which was established only for $C^2$ initial data. This will enable us to develop Weinberger's method in Part 2 to determine the spreading speed without knowing a priori the existence of the corresponding semi-wave solutions.  In Part 1 here, we also establish a spreading-vanishing dichotomy.
\end{abstract}

\maketitle


\SE{Introduction and main results}\label{sec1}
This work consists of two parts, and the current paper is Part 1.
The aim of this work is to classify the long-time dynamical behavior to a class of  space-time periodic reaction-diffusion equations with free boundaries of the form
\begin{equation}\label{eqf}\left\{\baa{ll}
u_t=du_{xx}+f(t,x,u),& g(t)<x<h(t),\quad t>0,\vspace{3pt}\\
u(t,g(t))=u(t,h(t))=0,&t>0,\vspace{3pt}\\
g'(t)=-\mu u_x(t,g(t)),&t>0, \vspace{3pt}\\
h'(t)=-\mu u_x(t,h(t)),&t>0, \vspace{3pt}\\
g(0)=g_0,\quad h(0)=h_0,\quad u(0,x)=u_0(x),& g_0\leq x\leq h_0,\eaa\right.
\end{equation} 
where $x=g(t)$ and $x=h(t)$ are the moving boundaries to be determined together with $u(t,x)$, and $\mu$ is a given positive constant. Throughout the paper, the diffusion coefficient $d$ is a positive constant; the reaction term $f:\R\times\R\times\R^+\mapsto\R$ is continuous, of class $C^{\alpha/2,\alpha}(\R\times\R)$ in $(t,x)\in\R\times\R$ locally uniformly in $u\in\R^+$(with $0<\alpha<1$), and of class $C^{1}$ in $u\in\R^+$ uniformly in $(t,x)\in\R\times\R$. The basic assumptions on $f$ are:
\begin{equation}\label{zero}
f(t,x,0)=0 \quad \hbox{for all  } t\in\R,\,\,x\in\R,
\end{equation}
there exists
$K>0$ such that 
\begin{equation}\label{globalb}
f(t,x,u)\leq Ku \quad \hbox{for all $u\geq 0$ and all $(t,x)\in\R^2$.}
\end{equation}

Later in the paper, we will  assume additionally that there is some  constant  $M>0$ such that 
\begin{equation}\label{hyp2}
f(t,x,u)\leq 0\,\hbox{ for all }\, t\in\R,\,x\in\R,\, u\geq M,
\end{equation}
and $f$ is  $\omega$-periodic in $t$ and $L$-periodic in $x$ for some positive constants $\omega$ and $L$, that is, 
\begin{equation}\label{period}
\left\{
\begin{array}{l}
f(t+\omega,x,u)=f(t,x,u)\\
 f(t,x+L,u)=f(t,x,u)
\end{array}\right. \hbox{ for all }\, (t,x)\in \R^2,\,u\geq 0.
\end{equation}
Let us note that since $f$ is $C^1$ in $u$,  \eqref{globalb} is satisfied whenever \eqref{zero} and \eqref{hyp2} hold.

The initial function $u_0$ belongs to $\mathcal{H}(g_0,h_0)$ for some $g_0<h_0$, where
\begin{equation*}
\mathcal{H}(g_0, h_0):=\Big\{\phi\in C([g_0,h_0]):\, \phi(g_0)=\phi(h_0)=0, \,\phi(x)>0 \hbox{ in }(g_0,h_0) \Big\}.
\end{equation*}

Free boundary problems of the type \eqref{eqf} arise naturally in many applied areas, such as  melting of ice in contact with 
water and spreading of invasive species; see, for example, \cite{ch2,cr,dlin,ru}. In this work, we regard \eqref{eqf} as describing 
the spreading of a new or invasive species over a one-dimensional habitat, where  $u(t,x)$ represents the population density of 
the species at location $x$ and time $t$, the reaction term $f$ measures the growth rate, the free boundaries $x=g(t)$ and 
$x=h(t)$ stand for the edges of the expanding population range, namely the spreading fronts. The Stefan conditions $g'(t)=-\mu u_x(t,g(t))$ and $h'(t)=-\mu u_x(t,h(t))$ may be 
interpreted as saying that  the spreading front expands at a speed  proportional to the population gradient at the front;
a deduction of these conditions from ecological considerations can be found in \cite{BDK}. When 
$f(t,x,u)$ is periodic with respect to $x$ and $t$ as described in \eqref{period}, problem \eqref{eqf} represents spreading of the speces in a  heterogeneous environment that 
is periodic in both space and time. 

\subsection{Related existing results and motivation}
Before going further, let us discuss the motivation of this work by firstly recalling some related known results.  In the case where the function $f$ does not depend on $x$ and $t$, and is of logistic type, that is, 
$$f(u)=u(a-bu)\hbox{ for some positive constants } a \hbox{ and } b,$$ 
such a problem was first studied in \cite{dlin} for the spreading of a new or invasive species. It is proved that, when
$$u_0\in C^2([g_0,h_0]),\, u_0(g_0)=u_0(h_0)=0, \,u_0(x)>0 \hbox{ in }(g_0,h_0), $$
there exists a unique solution $(u,g,h)$ with $u(t,x)>0$, $g'(t)<0$ and $h'(t)>0$ for all $t>0$ and $g(t)<x<h(t)$, 
and a spreading-vanishing dichotomy holds, namely, there is a barrier $R^*$ on the size of the population range, such that either 
\begin{itemize}
\item[(i)] {\bf Spreading}: the population range breaks the barrier at some finite time (i.e., $h(t_0)-g(t_0)\geq R^*$ for some $t_0>0$), and then the free boundaries go to infinity as $t\to\infty$ (i.e., $\lim_{t\to\infty}h(t)=\infty$ and $\lim_{t\to\infty}g(t)=-\infty$), and the population spreads to the entire space and stabilizes at its positive steady state (i.e. $\lim_{t\to\infty}u(t,x)=a/b $ locally uniformly in $x\in\R$) or   

\item[(ii)] {\bf Vanishing}: the population range never breaks the barrier (i.e. $h(t)-g(t)< R^*$ for all $t>0$), 
and the population vanishes (i.e. $\lim_{t\to\infty}u(t,x)=0$). 
\end{itemize}

Moreover, when spreading occurs, the asymptotic spreading speed can be determined, i.e., 
$$\lim_{t\to\infty}-g(t)/t=\lim_{t\to\infty}h(t)/t=c,
$$
 where $c$ is the unique positive constant such that  the problem
\begin{equation*}
\left\{\baa{l}
dq_{xx}-cq_x+q(a-bq)=0, \;q(x)>0 \quad\hbox{for } x\in (0,\infty),\vspace{3pt}\\
q(0)=0, \quad \mu q_x(0)=c,\quad q(\infty)=a/b
\eaa\right.
\end{equation*} 
has a (unique) solution $q$. Such a solution $q(x)$ is called a semi-wave with speed $c$.

These results have subsequently been extended to more general situations in several directions. Below, we only mention a few that are closely related to this work.

In the case where $f$ is $\omega$-periodic in $t$, radially symmetric in $x$, of logistic type and converges to some time periodic function $\bar{f}(t,u)$ as $|x|\to\infty$ with 
$$\bar{f}(t,u)=u(a_1(t)-b_1(t)u)\hbox{ for some positive $\omega$-periodic functions } a_1(t) \hbox{ and } b_2(t),$$ 
the existence of spreading speed is proved in \cite{dgp} by showing the existence and uniqueness of a positive time periodic semi-wave (see \cite[Theorem 2.5]{dgp}). When $f$ is radially symmetric in $x$, independent of $t$, of logistic type and converges to some  function $\hat{f}(|x|,u)$ as $|x|\to\infty$ with 
$$\hat{f}(r,u)=u(a_2(r)-b_2(r)u)\hbox{ for some positive  $L$-periodic functions } a_2(r) \hbox{ and } b_2(r),$$ 
the spreading speed is determined by the speed of the corresponding pulsating semi-wave (see \cite{dl}). In both cases, the existence of semi-waves is proved by a fixed point approach. 
Moreover, in the space-periodic case, a different method was used in \cite{zh} to prove the existence of pulsating semi-wave  (and hence the existence of spreading speed) for problem \eqref{eqf}, which is based on the approach developed in \cite{dgm}. 

In the recent work \cite{lls1,lls2},  the existence of time almost periodic semi-wave and spreading speed for problem \eqref{eqf} with time almost periodic monostable nonlinearity $f(t,u)$ are established.

\smallskip

When the function $f(t,x,u)$  varies with both the variables $t$ and $x$,  it seems difficult to adapt the approaches mentioned above to determine the spreading speed, mainly due to the difficulty to prove the existence of the corresponding semi-wave solutions.
The main goal of this work is to establish a different approach to treat  the space-time periodic case of problem \eqref{eqf}. 
We will focus on the monostable case and prove
a spreading-vanishing dichotomy, and then show the existence of spreading speed  when spreading happens. 

Our approach is based on developments of Weinberger's ideas firstly appeared in \cite{w1,w2}, where the existence of spreading 
speed for the corresponding Cauchy problem is proved without knowing the existence of the corresponding traveling wave 
solutions. However, to adapt these ideas to treat our free boundary problem here, it is necessary to firstly extend the existence 
and uniqueness theory for \eqref{eqf} with $C^2$ initial functions (see \cite{dlin}) to the case that the initial functions are 
merely continuous, which has not been considered before and requires  new techniques. 

Due to the different techniques used, and its length,  this work is divided into two separate papers.
The current paper constitutes Part 1, where we establish the existence and uniqueness theory for \eqref{eqf} with continuous initial functions, and also prove a spreading-vanishing dichotomy for \eqref{eqf}.  We will prove the existence of asymptotic spreading speed in Part 2 (see \cite{ddl}), based on the results obtained  here, and 
on Weinberger's ideas already mentioned above.



In the next two subsections, we describe the main results of this paper.

\subsection{Existence and uniqueness with continuous initial functions} For any $T>0$, by a {\it classical solution} of problem \eqref{eqf} for $0<t<T$ with initial function $u_0\in \mathcal{H}(g_0,h_0)$, we mean a triple  $\big(u(t,x),g(t),h(t)\big)$ such that $u\in C^{1,2}(G_T)\cap C(\overline{G_T})$, $g,\,h\in C^1((0,T])\cap C([0,T])$, and that all the identities in \eqref{eqf} are satisfied pointwisely in $G_T$, where
$G_T:=\big\{(t,x)\in\R^2:\,t\in(0,T],\,x\in [g(t),h(t)] \big\}.$

We note that the result below is for a rather general class of functions $f$, covering monostable, bistable and combustion types of nonlinearities, with no peridicity condition assumed.

\begin{theo}\label{existence}
Suppose that \eqref{zero} and \eqref{globalb} are satisfied. For any given $u_0\in\mathcal{H}(g_0,h_0)$, problem \eqref{eqf} admits a unique classical solution $\big(u(t,x),g(t),h(t)\big)$ defined for all $t>0$.
Moreover, for any $T>\tau>0$, 
\begin{equation}\label{loces}
\big\|u\big\|_{C^{1+\alpha/2,2+\alpha}(G_T^{\tau})}+\big\|g\big\|_{C^{1+\alpha/2}([\tau,T])} +\big\|h\big\|_{C^{1+\alpha/2}([\tau,T])} \leq C,
\end{equation}
\begin{equation}\label{g-h-es}
h_0\leq h(t)\leq h_0+Ht^{1/2}, \quad \quad  g_0-Ht^{1/2}\leq g(t)\leq g_0 \,\,\hbox{ for }\,\,  0\leq t\leq T,
\end{equation}
where $G_T^\tau=\big\{(t,x)\in\R^2:\,t\in[\tau,T],\,x\in[g(t),h(t)] \big\}$, $C$ and $H$ are  positive constants depending on $\tau$, $T$, $h_0-g_0$, $f$ and $\|u_0\|_{C([g_0,h_0])}$, with $H$ independent of $\tau\in (0, T)$.
\end{theo}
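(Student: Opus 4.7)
The strategy is to reduce to the existing $C^2$ theory of \cite{dlin} by approximating $u_0$ from above and below by smooth initial data, obtain uniform estimates that survive all the way down to $t=0$ in the integrated form \eqref{g-h-es}, extract a limit to produce a classical solution, and finally exploit a sandwich argument to deduce uniqueness.

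\emph{Step 1: Smooth approximation.} First I would choose two sequences $\{u_{0,n}^\pm\}\subset C^2([g_0,h_0])$ satisfying $u_{0,n}^\pm(g_0)=u_{0,n}^\pm(h_0)=0$, the second-order compatibility conditions making them admissible initial data in the sense of \cite{dlin}, the sandwich $u_{0,n}^-\le u_0\le u_{0,n}^+$, and $u_{0,n}^\pm\to u_0$ uniformly on $[g_0,h_0]$. The existence theory of \cite{dlin} extends to the present class of $f$ with only cosmetic changes, producing classical solutions $(u_n^\pm,g_n^\pm,h_n^\pm)$ on some maximal interval. Comparison with the spatially constant supersolution $\me^{Kt}(\|u_0\|_\infty+1)$, permitted by \eqref{globalb}, gives a uniform $L^\infty$-bound on $u_n^\pm$ and hence rules out blow-up, so each solution extends globally in time.

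\emph{Step 2: Uniform $t^{1/2}$ bound on the fronts.} This is the technical heart. On the right boundary I would construct, for suitable $A,\gamma,H>0$ depending only on $\|u_0\|_\infty$, $K$, $d$, $\mu$, $T$, an explicit upper barrier of the form
\be
V(t,x)=A\bigl(1-\me^{-\gamma(h_0+H\sqrt{t}-x)}\bigr)
\ee
on the tubular domain $\{(t,x):0<t\le T,\;h_0\le x\le h_0+H\sqrt{t}\}$. Choosing $A$ and $\gamma$ large enough one verifies that $V$ dominates $u_n^+$ on the parabolic boundary and is a supersolution of $u_t-du_{xx}=Ku$; the resulting inequality $V_t-dV_{xx}\ge KV$ then constrains $H$. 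Comparison forces $h_n^+(t)\le h_0+H\sqrt{t}$ for all $t\in(0,T]$ with $H$ independent of $n$, and differentiating the Stefan condition simultaneously yields the bound $-\partial_x u_n^+(t,h_n^+(t))\le C/\sqrt{t}$. The left boundary is handled symmetrically. This step also provides a uniform modulus of continuity of $u_n^\pm$ up to $t=0$.

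\emph{Step 3: Interior regularity and passage to the limit.} On $G_T^\tau$ for any $0<\tau<T$, the change of variables straightening $[g_n^\pm(t),h_n^\pm(t)]$ to $[0,1]$ converts \eqref{eqf} into a uniformly parabolic problem on a fixed cylinder whose coefficients are controlled via Step 2. Standard Schauder theory delivers the bound \eqref{loces} for $u_n^\pm,g_n^\pm,h_n^\pm$, uniformly in $n$. A diagonal extraction produces limits $(u^\pm,g^\pm,h^\pm)$ satisfying every identity of \eqref{eqf} pointwise on $G_T$ for $t>0$; continuity up to $t=0$ follows from the uniform convergence $u_{0,n}^\pm\to u_0$ together with the barrier of Step 2, giving a classical solution in the sense defined before the theorem.

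\emph{Step 4: Uniqueness, the main obstacle.} Any classical solution $(u,g,h)$ with datum $u_0$ is sandwiched by $(u_n^\pm,g_n^\pm,h_n^\pm)$ via the standard comparison principle for \eqref{eqf}, applied between smooth and merely continuous initial data (the comparison proceeds on any time slice $t\ge\tau>0$, where all three functions are smooth, and is then extended to $t=0$ by continuity in $\tau$). Uniqueness therefore reduces to $u_n^+-u_n^-\to 0$ and $h_n^+-h_n^-,\,g_n^+-g_n^-\to 0$ as $n\to\infty$. I expect this to be the principal obstacle, because one needs a continuous-dependence estimate of the form
\be
\|u_n^+-u_n^-\|_{L^\infty(G_T)}+|h_n^+-h_n^-|_{C([0,T])}+|g_n^+-g_n^-|_{C([0,T])}\le C(T)\,\|u_{0,n}^+-u_{0,n}^-\|_{L^\infty},
\ee
uniform in $n$. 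The natural route is to set up the contraction-mapping scheme of \cite{dlin} in the straightened variables, track how each term depends on the initial data through Schauder bounds that are uniform up to $t=0$ thanks to Step 2, and close the estimate via Gronwall's inequality after bounding the difference of the moving boundaries in terms of the difference of the normal derivatives at the fronts.
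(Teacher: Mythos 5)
Your broad strategy for existence (approximate by $C^2$ data, pass to the limit) matches the paper's, but there are two genuine gaps.

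\textbf{Approximation from above on the fixed interval.} In Step 1 you ask for $u_{0,n}^+\in C^2([g_0,h_0])$ with $u_{0,n}^+(g_0)=u_{0,n}^+(h_0)=0$, $u_{0,n}^+\ge u_0$ and $u_{0,n}^+\to u_0$ uniformly. This is generically impossible: if $u_0$ decays sublinearly at an endpoint (say $u_0(x)\sim\sqrt{h_0-x}$ as $x\to h_0$), then any $C^1$ majorant vanishing at $h_0$ would need $|(u_{0,n}^+)'(h_0)|\ge\sqrt{h_0-x}/(h_0-x)\to\infty$. The paper sidesteps this entirely by approximating from \emph{below} with shrunk supports $[g_{0n},h_{0n}]\subsetneq(g_0,h_0)$ (Section 2.1), and when an upper approximation is needed (Remark~\ref{approx-above}) it \emph{enlarges} the interval to $[\tilde g_{0n},\tilde h_{0n}]\supset[g_0,h_0]$ so the boundary condition is imposed outside $[g_0,h_0]$. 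Your same-interval two-sided sandwich cannot be set up as written. This also undercuts the barrier in Step 2: the lateral-boundary matching $V(t,h_0)\ge u_n^+(t,h_0)$ for $t$ near $0$ requires a rate estimate on $u_n^+(t,h_0)$ that is uniform in $n$, and for upper approximants with $C^2$ norms blowing up in $n$ this is not available (for the paper's lower approximants with shrunk support this is a non-issue since they vanish near $h_0$; the paper in any case proves the $t^{1/2}$ bound by comparison with the Cannon--Hill one-phase Stefan problem with a constant lateral boundary value, where the H\"older-$1/2$ regularity of the front is classical).

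\textbf{Uniqueness.} You identify Step 4 as the principal obstacle and propose to close it by a Gronwall-type continuous-dependence estimate in straightened coordinates with constants uniform in $n$; you do not carry this out, and it is not clear it can be made to work, since the Schauder constants in the straightened problem degenerate as $t\to 0$ precisely because the initial data are only continuous. The paper takes a different route (Lemmas~\ref{clawek}--\ref{clauni}): it shows that any classical solution of \eqref{eqf} is a weak solution in the enthalpy/duality sense (the integral identity \eqref{dweakf} with the graph $\kappa$), and then proves uniqueness of weak solutions by a Holmgren-type adjoint argument, building test functions $\phi_m$ that solve a backward degenerate parabolic equation with mollified coefficients $e_m,l_m$ and passing to the limit. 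This is the mechanism the paper uses to handle merely-continuous data; note also that in the paper the continuous-dependence estimate (Proposition~\ref{cdepend}) is a \emph{consequence} of uniqueness plus the monotone approximation, not a route to it, so your proposed order of reasoning is essentially reversed relative to what is actually proved. Without something like the weak-solution duality argument, Step 4 remains open.
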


 By slight modifications of the proof and statements of  Theorem~\ref{existence}, this result can be extended to the case that  the initial function is bounded and piecewise continuous. Similar problems have been addressed for one-dimensional free boundary problems for the heat equation with  bounded piecewise continuous initial and boundary data in \cite{ch,chr,fpk}.

\subsection{Spreading-vanishing dichotomy} We now focus on monostable $f(t,x,u)$ that is periodic in both $t$ and $x$.
More precisely, we assume that the function $f$  satisfies  \eqref{zero}, \eqref{hyp2}, 
\eqref{period} and 
\begin{equation}\label{hyp1}
\forall\, (t, x) \in\R^2,\hbox{ the function }\, u\mapsto f(t,x,u)/u \,\hbox{ is decreasing for }\, u>0.
\end{equation}

We will show that whether spreading or vanishing happens partly depends on the sign of the generalized principal eigenvalue of the linear operator $\mathcal{L}$ defined by  
$$\mathcal{L}\psi:= \partial_t\psi-d\partial_{xx}\psi-\partial_uf(t,x,0)\psi \,\hbox{ for } \psi\in C_\omega^{1,2}(\R^2),$$
where
\[
C_\omega^{1,2}(\R^2):=\{ \phi\in C^{1,2}(\R^2),\; \phi(t+\omega, x)=\phi(t, x) \mbox{ for all } (t,x)\in\R^2\}.
\]
 The generalized principal eigenvalue of $\mathcal{L}$ is given by
\begin{equation}\label{princi}\left.\baa{ll}
\lambda_1(\mathcal{L})=\sup\big\{\lambda\in\R:& \hbox{there exists } \psi\in C_\omega^{1,2}(\R^2)  
 \vspace{3pt}\\
&\hbox{ such that } \psi>0 \hbox{ and } (\mathcal{L}-\lambda)\psi\geq 0 \hbox{ in } \R^2 \big\}.\eaa\right.
\end{equation}
In what follows, we assume that 
\begin{equation}\label{monostable}
\lambda_1(\mathcal{L})<0.
\end{equation}

An example of $f$ satisfying all these assumptions is the logistic nonlinearity 
\begin{equation}\label{logic}
f(t,x,u)=u\big(a(t,x)-b(t,x)u\big)
\end{equation}
where $a,\,b$ are of class $C^{\alpha/2,\alpha}$ which are $\omega$-periodic in $t$ and $L$-periodic in $x$, and there are positive constants $\kappa_1$, $\kappa_2$ such that 
$\kappa_1\leq a(t,x)\leq \kappa_2$ and $\kappa_1\leq b(t,x)\leq \kappa_2$ for all $(t,x)\in\R^2$.  These conditions may also be satisfied
with $a(t,x)$ sign-changing (see \cite{wmx}).

It is well known (see \cite{na1,na3}) that, under the above assumptions on $f$, the following problem 
\begin{equation}\label{psteady11}
\left\{\baa{l}
p_t=dp_{xx}+f(t,x,p)\,\, \hbox{ in }\,(t,x)\in\R^2,\vspace{3pt}\\
p(t,x)\,\hbox{ is $\omega$-periodic in $t$ and $L$-periodic in $x$},\eaa\right.
\end{equation}   
admits a unique positive solution $p(t,x)\in C^{1,2}(\R^2)$, and $p(t,x)$ is globally asymptotically stable in the sense that for any nonnegative bounded non-null initial function $v_0\in C(\R)$, there holds 
\begin{equation}\label{stablegene}
v(t+s,x;v_0) - p(t+s,x)\to 0\,\hbox{ as }\, s\to\infty  \, \hbox{ locally uniformly in }\, (t,x)\in\R^2,
\end{equation}
where $v(t,x;v_0)$ is the unique solution of the corresponding Cauchy problem 
\begin{equation}\label{cauchy}
\left\{\baa{ll}
v_t=dv_{xx}+f(t,x,v),& x\in\R,\,t>0,\vspace{3pt}\\
v(0,x)=v_0(x),& \,x\in\R.\eaa\right.
\end{equation}   

Before stating the spreading-vanishing dichotomy for problem \eqref{eqf}, let us introduce one more notation. Let $\big(u(t,x),g(t),h(t)\big)$ be the global classical solution of \eqref{eqf} with initial function $u_0\in \mathcal{H}(g_0,h_0)$. By the parabolic maximum principle and the Hopf lemma we easily deduce from the Stefan condition that $h'(t)>0$ and $g'(t)<0$ for all $t>0$. Therefore, the limits $\lim_{t\to\infty}h(t)$ and $\lim_{t\to\infty}g(t)$ exist and we denote them by $h_{\infty}$ and $g_{\infty}$, respectively.

\begin{theo}\label{spva1}
Suppose that \eqref{zero},  \eqref{hyp2}, \eqref{period}, \eqref{hyp1} and \eqref{monostable} are all satisfied. Then the following alternative hold:
Either 
\begin{itemize}
\item[{\bf (i)}] spreading happens, that is, $(g_{\infty}, h_{\infty})=\R$, and 
$$\lim_{t\to\infty}\big|u(t,x)-p(t,x) \big|=0 \,\,\hbox{ locally uniformly in }\, x\in\R,$$
where $p(t,x)$ is the unique positive solution of problem \eqref{psteady11}; 
or 
\item[{\bf (ii)}] vanishing happens, that is, there exists some constant $\overline{R}>0$ such that $(g_{\infty}, h_{\infty})$ is a finite interval with length no larger than $2\overline{R}$, and there holds
$$\lim_{t\to\infty}\max_{g(t)\leq x\leq h(t)}u(t,x)=0. $$
\end{itemize}
\end{theo}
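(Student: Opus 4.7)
\emph{Setup.} The natural threshold for the dichotomy is given by the Dirichlet principal eigenvalue of $\mathcal L$. For each bounded interval $I\subset\R$, let $\lambda_1^D(I)$ denote the principal eigenvalue (in the $\omega$-periodic-in-$t$ sense) of
\begin{equation*}
\psi_t-d\psi_{xx}-\partial_uf(t,x,0)\psi=\lambda\psi\ \text{in}\ \R\times I,\ \psi=0\ \text{on}\ \R\times\partial I,\ \psi\ \omega\text{-periodic in}\ t.
\end{equation*}
Standard monotonicity in $I$ together with $\lambda_1^D(I)\to\lambda_1(\mathcal L)<0$ as $|I|\to\infty$ will produce a critical length $R^*>0$ with $\lambda_1^D(I)<0$ whenever $|I|>R^*$, and I then set $\bar R:=R^*/2$. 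Two facts will be used repeatedly: when $\lambda_1^D(I)<0$, under \eqref{hyp1} the Dirichlet problem for \eqref{cauchy} on $I$ admits a unique positive $\omega$-periodic solution $p_I$, globally attracting among nontrivial nonnegative Dirichlet initial data; and $p_I(t,x)\to p(t,x)$ locally uniformly as $I\nearrow\R$.

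\emph{Vanishing case.} The Hopf lemma applied to the Stefan conditions gives $h'>0$ and $g'<0$, so $h_\infty,g_\infty$ exist in $[-\infty,\infty]$. I would first treat the case that both $h_\infty$ and $-g_\infty$ are finite. Using the interior parabolic regularity in \eqref{loces} from Theorem~\ref{existence}, I extract from the time-translates $u(\cdot+n\omega,\cdot)$ a nonnegative $\omega$-periodic limit $\bar u$ solving the Dirichlet problem on $[g_\infty,h_\infty]$. If $h_\infty-g_\infty\le R^*$, then $\lambda_1^D([g_\infty,h_\infty])\ge 0$ together with \eqref{hyp1} forces $\bar u\equiv 0$, so $u\to 0$ uniformly on $[g(t),h(t)]$. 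If instead $h_\infty-g_\infty>R^*$, a lower comparison on an interior subinterval of length $>R^*$ gives $\bar u\ge p_{[g_\infty,h_\infty]}>0$; but Hopf at $x=h(t)$ then yields $h'(t)\ge c_0>0$ along a subsequence of times, contradicting $h_\infty<\infty$. Hence the finite case necessarily satisfies $h_\infty-g_\infty\le 2\bar R$ and $u\to 0$, giving alternative (ii).

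\emph{No mixed case.} Next I rule out the possibility that exactly one of $h_\infty,-g_\infty$ is infinite. Suppose for contradiction that $h_\infty<\infty$ and $g_\infty=-\infty$. Choose $[a,b]\subset(-\infty,h_\infty)$ with $b-a>R^*$; eventually $[a,b]\subset(g(t),h(t))$. A lower comparison of $u$ with the Dirichlet problem on $[a,b]$ launched from a small positive bump beneath $u$ at that time, combined with the attracting property of $p_{[a,b]}$, yields $\liminf_{t\to\infty}u(t,\cdot)\ge p_{[a,b]}$ on $[a,b]$. Letting $b\uparrow h_\infty$, positivity persists up to $h_\infty$, and Hopf at $x=h(t)$ once again forces $h'(t)\ge c_0>0$ eventually, contradicting $h_\infty<\infty$. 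The symmetric case is identical, so the only alternative to (ii) is $(g_\infty,h_\infty)=\R$.

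\emph{Spreading to $p$, and main obstacle.} When $(g_\infty,h_\infty)=\R$, I get the upper bound $\limsup_{t\to\infty}(u(t,x)-p(t,x))\le 0$ locally uniformly by comparing $u$ with the Cauchy solution of \eqref{cauchy} launched from a bounded extension of $u_0$ to $\R$ and invoking \eqref{stablegene}. For the matching lower bound, given any compact $K\subset\R$ I choose $I\supset K$ with $\lambda_1^D(I)<0$; since $I\subset(g(t),h(t))$ eventually, a lower comparison with the Dirichlet problem on $I$ starting from a small compactly supported positive datum gives $\liminf_{t\to\infty}u(t,\cdot)\ge p_I$ on $K$. Letting $|I|\to\infty$ and using $p_I\to p$ locally uniformly closes case (i). The main obstacle will be the vanishing/mixed-case analysis: one must convert a subsequential nontrivial $\omega$-periodic limit of the time-translates of $u$ into a strict lower bound for $|u_x|$ at the moving boundary via the Hopf lemma, and then contradict the finiteness of $h_\infty$ or $g_\infty$. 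The interplay between the improved initial regularity from Theorem~\ref{existence}, the monostability \eqref{hyp1}, and the convergence $p_I\to p$ as $I\nearrow\R$ will be essential throughout.
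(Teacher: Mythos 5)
Your overall structure mirrors the paper's proof. Both set up the threshold via the Dirichlet principal eigenvalue $\lambda_{1,R}^y$ of the periodic-parabolic operator (Lemma~\ref{proprin}), both establish existence and global attractivity of positive $\omega$-periodic Dirichlet solutions when the eigenvalue is negative (Lemmas~\ref{localsta}--\ref{halfsta}), both extract an $\omega$-periodic limit of the time-translates $u(\cdot+n\omega,\cdot)$ in the vanishing/mixed case, and both aim to derive a contradiction through the Hopf lemma. You correctly flag this contradiction step as the main difficulty.

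The genuine gap sits precisely there. Your argument hinges on the assertion that ``Hopf at $x=h(t)$ once again forces $h'(t)\ge c_0>0$ eventually.'' This is not a consequence of Hopf's lemma applied to $u$: for each fixed $t$ it gives $u_x(t,h(t))<0$, hence $h'(t)>0$, which we already knew; it gives no \emph{uniform} lower bound, and the constant degenerates as $t\to\infty$. Converting the positive periodic limit $\bar w$ into a uniform lower bound for $-u_x(t,h(t))$ would require a careful barrier or boundary-regularity argument that you do not spell out. The paper's Lemma~\ref{vanishing} takes an inverted route that sidesteps this entirely: using the continuous dependence and uniqueness theory developed in Section~\ref{sec2} (specifically Proposition~\ref{cdependunbd} for the half-line problem \eqref{eqfunbd}), the translates $\big(u(\cdot+n\omega,\cdot),\,h(\cdot+n\omega)\big)$ are shown to converge to the solution $(\tilde w,\tilde h)$ of \eqref{eqfunbd} with initial data $\big(\bar w(0,\cdot),h_\infty\big)$; since $h(n\omega)\to h_\infty$ one must have $\tilde h\equiv h_\infty$ and $\tilde w\equiv\bar w$, so the Stefan condition gives $\partial_x\bar w(t,h_\infty)=-\mu^{-1}\tilde h'(t)\equiv 0$, contradicting Hopf's lemma applied to the \emph{limit} $\bar w$. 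Thus the paper shows the boundary derivative of the limit \emph{vanishes} rather than trying to bound $-u_x(t,h(t))$ from below; this is the missing idea in your write-up, and it is exactly what makes the continuous-dependence machinery of Section~\ref{sec2} indispensable here.

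A smaller issue: in a spatially periodic medium the critical half-length $R^*$ depends on the center $y_\infty$ of the interval, and your inference ``$h_\infty-g_\infty\le R^*$ implies $\lambda_1^D([g_\infty,h_\infty])\ge 0$'' runs in the wrong direction. The correct logic is: the Hopf-type contradiction excludes $\lambda_1^D([g_\infty,h_\infty])<0$, so $\lambda_1^D([g_\infty,h_\infty])\ge 0$, whence by the definition of $R^*(y_\infty)$ one obtains $h_\infty-g_\infty\le 2R^*(y_\infty)\le 2\overline R$ with $\overline R=\max_y R^*(y)$ as in \eqref{deoverr}.
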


(The positive constant $\overline{R}$ here can be determined; see \eqref{deoverr}). 

For any given initial function $u_0\in \mathcal{H}(g_0,h_0)$, we have the following criteria for spreading or vanishing. 

\begin{theo}\label{spva2}
Suppose that \eqref{zero},  \eqref{hyp2}, \eqref{period}, \eqref{hyp1} and \eqref{monostable} are all satisfied. Then there exists a positive constant $R^*$ such that 
\begin{itemize}
\item[(i)] if $(h_0-g_0)/2\geq  R^*$, then spreading always occurs;

\item[(ii)] if $(h_0-g_0)/2< R^*$, then there exists a unique $\mu^*>0$ depending on $u_0$ such that vanishing occurs if $0<\mu\leq  \mu^*$ and spreading occurs if $\mu>\mu^*$.
\end{itemize}
\end{theo}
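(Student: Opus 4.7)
The plan is to combine a monotone dependence on the Stefan coefficient $\mu$ with a critical length $R^{*}$ identified through a periodic Dirichlet eigenvalue of $\mathcal{L}$, following the scheme that originated in \cite{dlin} and was adapted to partially periodic media in \cite{dgp,dl}. The key ingredient, proved by a standard comparison argument for \eqref{eqf}, is $\mu$-monotonicity: if $0<\mu_{1}<\mu_{2}$ and the other data agree, then $g^{\mu_{2}}(t)\leq g^{\mu_{1}}(t)<h^{\mu_{1}}(t)\leq h^{\mu_{2}}(t)$ and $u^{\mu_{1}}\leq u^{\mu_{2}}$ on the common domain for every $t>0$. Let $\lambda_{1}^{R}$ denote the $\omega$-periodic Dirichlet principal eigenvalue of $\mathcal{L}$ on $(-R,R)$; then $R\mapsto\lambda_{1}^{R}$ is continuous and strictly decreasing with $\lambda_{1}^{R}\to+\infty$ as $R\to 0^{+}$ and $\lambda_{1}^{R}\to\lambda_{1}(\mathcal{L})<0$ as $R\to\infty$ by \eqref{monostable}, so there is a unique $R^{*}>0$ with $\lambda_{1}^{R^{*}}=0$. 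A sub-solution argument using the principal eigenfunction on a slightly larger interval identifies this $R^{*}$ with the constant $\overline{R}$ of Theorem \ref{spva1}.

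Part (i) is then immediate: the strict monotonicity $h'(t)>0>g'(t)$ for $t>0$ yields $h_{\infty}-g_{\infty}>h_{0}-g_{0}\geq 2R^{*}=2\overline{R}$, which rules out vanishing by Theorem \ref{spva1}. For Part (ii) we use the $\mu$-monotonicity in both directions. For small $\mu$: choose $\rho_{0}\in((h_{0}-g_{0})/2,R^{*})$ so that $\lambda_{1}^{\rho_{0}}>0$, let $\phi(t,y)>0$ be the associated $\omega$-periodic eigenfunction on $(-\rho_{0},\rho_{0})$, and build a supersolution of the form $\bar u(t,x)=A\me^{-\eta t}\phi(t,x\rho_{0}/\sigma(t))$ on a slowly expanding interval $[-\sigma(t),\sigma(t)]$ with $\sigma(0)=\rho_{0}$ and $\sigma_{\infty}:=\lim_{t\to\infty}\sigma(t)<R^{*}$. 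Assumption \eqref{hyp1} gives $f(t,x,u)\leq f_{u}(t,x,0)u$ for $u\geq 0$; combined with $\lambda_{1}^{\rho_{0}}>0$, this yields the PDE supersolution inequality for a suitably small $\eta>0$ (with $A$ then chosen so that $u_{0}\leq\bar u(0,\cdot)$), while the Stefan-type inequality $\sigma'(t)\geq-\mu\bar u_{x}(t,\sigma(t))$ holds whenever $\mu$ lies below a positive threshold, because $|\bar u_{x}(t,\sigma(t))|$ is controlled uniformly in $t$ by periodicity. Comparison then forces $h_{\infty}-g_{\infty}\leq 2\sigma_{\infty}<2R^{*}$, hence vanishing. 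For large $\mu$: a lower-barrier argument based on the Stefan identity $(h^{\mu})'(t)=-\mu u^{\mu}_{x}(t,h^{\mu}(t))$ and a $\mu$-independent Dirichlet comparison on $[g_{0},h_{0}]$ produces constants $c,t_{0}>0$ (both $\mu$-independent) with $h^{\mu}(t_{0})-g^{\mu}(t_{0})\geq h_{0}-g_{0}+c\mu t_{0}$, which exceeds $2R^{*}$ for $\mu$ large, reducing to Part (i). Combining these with the $\mu$-monotonicity yields the unique threshold $\mu^{*}\in(0,\infty)$ claimed in (ii).

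The main difficulty I foresee is the construction of the moving-boundary supersolution in the small-$\mu$ regime. In autonomous or space-homogeneous time-periodic settings the spatial eigenfunction is stationary in $t$ and the envelope $\sigma(t)$ can be chosen by an elementary ODE argument. Here, by contrast, $\phi(t,\cdot)$ oscillates periodically in both $t$ and $x$, so the error terms introduced by the space-rescaling $y=x\rho_{0}/\sigma(t)$---in particular the cross-term $f_{u}(t,y,0)-f_{u}(t,x,0)$ arising from the space-dependence of the reaction and the Hopf-type boundary gradient at $y=\rho_{0}$---must be balanced against the positive quantity $\lambda_{1}^{\rho_{0}}$ pointwise in $t$ rather than merely on average over a period.
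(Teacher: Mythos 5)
Your overall scheme---the $\mu$-monotonicity, a critical radius determined by an $\omega$-periodic Dirichlet eigenvalue, a slowly-expanding supersolution for small $\mu$ and a lower barrier for large $\mu$, followed by a threshold argument---is essentially the one the paper follows (by citing Lemmas~\ref{criteria1}, \ref{criteria2} and the threshold argument of \cite[Theorem 2.10]{dg1}). However, there is a genuine gap in how you identify $R^*$, and it propagates into both parts of the proof.

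In space-time periodic media the principal eigenvalue depends on \emph{where} the interval sits, not just on its length: the paper's $\lambda_{1,R}^y$ is the eigenvalue of $\mathcal{L}$ (with potential $\partial_u f(t,x+y,0)$) on $(-R,R)$, and the critical radius $R^*(y)$ defined by $\lambda_{1,R^*(y)}^y=0$ is $L$-periodic but genuinely nonconstant in $y$. The $R^*$ in Theorem~\ref{spva2} is $R^*(y_0)$ with $y_0=(g_0+h_0)/2$ (the paper says so explicitly right after Theorem~\ref{spva2}). Your $\lambda_1^R$ is the eigenvalue on $(-R,R)$ centred at the origin, so your $R^*$ is $R^*(0)$, and your claimed identification with $\overline{R}$ is also wrong: $\overline{R}=\max_y R^*(y)\geq R^*(y_0)$, with strict inequality in general. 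This is not a cosmetic matter. If you set $R^*=\overline{R}$, Part (i) does follow from Theorem~\ref{spva1} exactly as you argue, but Part (ii) fails: whenever $R^*(y_0)<(h_0-g_0)/2<\overline{R}$, Lemma~\ref{h0hinfty} forces spreading for \emph{every} $\mu>0$, so no $\mu^*$ with vanishing for small $\mu$ can exist. And if you set $R^*=R^*(0)$, both parts are wrong unless $y_0\in L\Z$.

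Once $R^*$ is corrected to $R^*(y_0)$, your Part (i) argument is also too quick. From $h_\infty-g_\infty>h_0-g_0\geq 2R^*(y_0)$ you cannot invoke Theorem~\ref{spva1}(ii) directly, because what vanishing gives (Lemma~\ref{vanishing}) is $h_\infty-g_\infty\leq 2R^*(y_\infty)$ with $y_\infty=(g_\infty+h_\infty)/2$, and nothing prevents $R^*(y_\infty)>R^*(y_0)$. The paper's proof of Lemma~\ref{h0hinfty} instead compares $u$ with the solution of the \emph{fixed-boundary} Dirichlet problem on $[g_0,h_0]$: since $\lambda_{1,(h_0-g_0)/2}^{y_0}<0$, Lemma~\ref{localsta} gives a positive time-periodic attractor for that Dirichlet problem, hence $\liminf_{t\to\infty}u(t,x)>0$ on $(g_0,h_0)$, which directly contradicts $\max u\to 0$; the borderline case $h_0-g_0=2R^*(y_0)$ is then reduced to the strict one by running the flow for a short time and comparing from below. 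Your supersolution construction for small $\mu$ also needs the shift to $y_0$ (use $\lambda_{1,\rho_0}^{y_0}$ and the corresponding eigenfunction) and, as you correctly flag, the cross-term $\partial_u f(t,x\rho_0/\sigma(t)+y_0,0)-\partial_u f(t,x+y_0,0)$ must be absorbed by the spectral gap; this can be done by keeping $\sigma_\infty-\rho_0$ small, but the present write-up leaves it unresolved. Finally, the uniform-in-$\mu$ lower bound $h^\mu(t_0)-g^\mu(t_0)\geq h_0-g_0+c\mu t_0$ is stronger than what the cited lemmas actually establish and should be replaced by the integral identity or the barrier comparison used in \cite[Lemma~2.8]{dg1} and \cite[Lemma~3.10]{dgp}.
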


In spatially periodic media, the critical size $R^*$ depends continuously and periodically on the value of $(g_0+h_0)/2$ (see \eqref{lambda*} and Lemma~\ref{proprin}), while in the spatially homogeneous case, $R^*$ is independent of $(g_0+h_0)/2$
 (see \cite{dlin,dlou}).

\subsection{Outline of the paper} The remaining part of this paper is organized as follows. Section~\ref{sec2} is divided into 3 subsections. 
In subsection 2.1, we give the proof of Theorem~\ref{existence}. In subsection 2.2, 
we prove the continuous dependence of the classical solutions on the intial data and some comparison results. In subsection 2.3,  we 
list without proof the corresponding results for a closely related problem of \eqref{eqf} (see \eqref{eqfunbd}), which will be used 
in Part 2 to determine the spreading speed. Section~\ref{sec3} is devoted to the proof of Theorems~\ref{spva1}~and~\ref{spva2}. 

\section{Existence, uniqueness and continuous dependence}\label{sec2}
This section is devoted to the proof of the existence and uniqueness of classical solutions for problem~\eqref{eqf}~as well as some
basic properties of these solutions. Throughout this section, we  assume that  $f$ satisfies  \eqref{zero} and \eqref{globalb}.

\subsection{Existence and uniqueness of classical solutions}\label{sec21}
For a given $u_0\in \mathcal{H}(g_0,h_0)$, we first prove the local existence of a classical solution and  the estimates~\eqref{loces},
\eqref{g-h-es}. Once we know the existence of a classical solution $(u, g,h)$ defined for $t\in (0, T]$ with some small $T>0$, then since $u(T,x)$ is a $C^2$ function one can apply the existing theory (see \cite{dlin}) to extend the solution to all $t>T$.

We prove the local existence result through an approximation argument.
Let $\epsilon_0=(h_0-g_0)/4$. For any given $u_0\in \mathcal{H}(g_0,h_0)$, we choose a nondecreasing sequence $\{u_{0n}\}_{n\in\N}\subset  C^2([g_0,h_0])$ such that for each  $n\in\N$, 
$$u_{0n}(x)=0\, \hbox{ for }\, x\in [g_0,g_{0n}]\cup [h_{0n},h_0], \quad
 0<u_{0n}(x)\leq u_{0}(x)\, \hbox{ for }\,x\in (g_{0n},h_{0n}), $$ 
where $g_{0n}=g_0+\epsilon_0/n$ and $h_{0n}=h_0-\epsilon_0/n$, and that 
$$u_{0n} \to u_0 \,\hbox{ in }\, C([g_0,h_0])\,\hbox{ as }\, n\to\infty. $$
It follows from \cite[Theorem 5.1]{dlin}\footnote{We remark that, although \cite[Theorem 5.1]{dlin} only deals  with problem~\eqref{eqf} with a special homogeneous logistic nonlinearity $f(t,x,u)=u(a-bu)$, its proof extends straightforwardly  to~\eqref{eqf} with a general  nonlinearity $f(t,x,u)$ satisfying~\eqref{zero}~and~\eqref{globalb}.} that for each $n\in\N$, problem~\eqref{eqf} admits a unique classical solution $(u_n,g_n,h_n)$ defined for all $t>0$ with 
\[
(u_{n}(0,x), g_n(0), h_n(0))=(u_{0n}(x), g_{0n}, h_{0n}) \mbox{ for $x\in[g_{0n},h_{0n}]$.}
\]
  Moreover, by the comparison principle for problem \eqref{eqf} with smooth initial values (see, e.g., \cite[Lemma~5.7]{dlin}), one obtains that for each $n\in\N$,
\begin{equation*}
g_{n+1}(t)\leq g_n(t),\,\,\,h_{n+1}(t)\geq h_n(t)\,\hbox{ for all }\,t>0,
\end{equation*}
and 
\begin{equation*}
0< u_n(t,x)  \leq u_{n+1}(t,x)\,\hbox{ for  } \, g_n(t)<x < h_n(t),\,\,\,t>0.
 \end{equation*}
On the other hand, it follows from the comparison principle again that 
\[
\mbox{$u_n\leq \tilde u$, $g_n\geq \tilde g$
and $h_n\leq \tilde h$,}
\]
 where $(\tilde u, \tilde g,\tilde h)$ is the classical solution to problem~\eqref{eqf}~with initial function $\tilde{u}_0\in C^2([g_0-1,h_0+1])$ such that $\tilde{u}_0> 0$ in $(g_0-1,h_0+1)$, $\tilde u_0(g_0-1)=\tilde u_0(h_0+1)=0$ and $\tilde{u}_0\geq u_0$ in $[g_0,h_0]$. As a consequence, there is a triple  $(u,g,h)$ such that 
\begin{equation}\label{mconvergh}
g(t)=\lim_{n\to\infty}g_n(t),\,\;\,h(t)=\lim_{n\to\infty}h_n(t)\, \hbox{ pointwisely for }\, t\geq 0, 
\end{equation}
and that 
\begin{equation}\label{mconveru}
u(t,x)=\lim_{n\to\infty}u_n(t,x)\,\hbox{ pointwisely for }\, g(t)< x< h(t),\, t\geq 0. 
\end{equation}

In what follows, we will show, via a sequence of lemmas, 
 that  $(u,g,h)$ is  a classic solution for problem~\eqref{eqf}~with initial function~$u_0$.

\begin{lem}\label{estderi}
Let $\big(u_n,g_n,h_n\big)$ be as  above. Then for any given $0<\tau_0<T_0$, there are positive constants $C_1$,
 $C_2$  independent of $n$ such that 
\begin{equation*}
0< u_n(t,x)\leq C_1 \,\,\hbox{ for }\,\, g_n(t)< x< h_n(t),\,0\leq t\leq T_0,
\end{equation*}
and 
\begin{equation*}
-C_2 \leq g_n'(t)<0, \,\,\, 0<h_n'(t)\leq C_2 \,\, \hbox{ for }\, \tau_0\leq t\leq T_0.
\end{equation*}
\end{lem}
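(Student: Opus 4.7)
The plan is to establish the three conclusions in sequence: the uniform sup-norm bound on $u_n$, the strict signs $h_n'>0$, $g_n'<0$, and finally the quantitative two-sided bound on these derivatives for $t$ away from zero. For the sup-norm bound, observe that hypothesis \eqref{globalb} makes the spatially constant function $W(t) := e^{Kt}\|u_0\|_{C([g_0,h_0])}$ a supersolution: $W_t - dW_{xx} - f(t,x,W) \ge KW - KW = 0$, with $W(0) \ge u_{0n}$ on $[g_{0n}, h_{0n}]$ and $W(t) > 0 = u_n$ on the lateral free boundaries. The moving-domain comparison principle (as in \cite[Lemma~5.7]{dlin}) then yields $u_n \le W(T_0) =: C_1$ uniformly in $n$ on $[0,T_0]$. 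Strict positivity $u_n > 0$ in the interior, together with the signs $h_n'>0$ and $g_n'<0$, follows from the strong maximum principle and the Hopf boundary point lemma applied to the linear equation $u_{n,t} - du_{n,xx} - c_n(t,x)\,u_n = 0$ satisfied by $u_n$, where $c_n(t,x) := \int_0^1 f_u(t,x,\theta u_n(t,x))\,d\theta$ is bounded because $f$ is $C^1$ in $u$ and $u_n \le C_1$.

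For the uniform upper bound on $h_n'$ and $-g_n'$ on $[\tau_0, T_0]$, I would flatten each free boundary locally and construct a barrier that controls $u_{n,x}$ at the boundary without needing the velocity as an input. Near the right boundary, set $v_n(t,y) := u_n(t, h_n(t) - y)$ on a fixed strip $\{0 \le y \le \delta\}$, where $\delta$ is chosen smaller than $(h_0 - g_0)/2 \le h_n(t) - g_n(t)$ uniformly in $n$. A computation gives
\[
v_{n,t} - d\,v_{n,yy} + h_n'(t)\, v_{n,y} = f(t,\,h_n(t)-y,\,v_n),
\]
with $v_n(t,0) = 0$, $0 \le v_n \le C_1$, and $\mu\, v_{n,y}(t,0) = h_n'(t) > 0$. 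I would build an upper barrier of the form $\bar w(y) = C_1\,\phi(y/\delta)$ with $\phi$ concave (for example $\phi(s) = 1 - (1-s)^2$, or an exponential profile), chosen so that the drift term $h_n'(t)\,\bar w_y \ge 0$ reinforces rather than opposes the supersolution inequality; then $v_n \le \bar w$ on the strip for $t$ beyond a short initial layer, and differentiating at $y = 0$ yields $h_n'(t) = \mu\, v_{n,y}(t,0) \le \mu\, \bar w'(0) =: C_2$. The symmetric construction at the left boundary provides $-g_n'(t) \le C_2$.

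The main obstacle is the construction of a barrier that is uniform in $n$ despite the fact that the sequence $u_{0n}$ converges to $u_0$ only in $C([g_0, h_0])$, so that $\|u_{0n}\|_{C^1}$ need not be bounded in $n$. This is precisely why the derivative bound holds only for $t \ge \tau_0 > 0$: one must wait past an initial boundary layer during which parabolic smoothing tames the (possibly $n$-unbounded) initial gradient at the free boundary. In implementing the third step, the barrier $\bar w$ must therefore be allowed to depend mildly on $t$, in order to transition from whatever bound holds on $v_n(\tau_0/2, \cdot)$ (obtained by interior parabolic regularity applied away from $t=0$) to the stationary barrier described above; verifying this matching is the technical core of the proof, and makes essential use of both the uniform bound $u_n \le C_1$ and the uniform positive lower bound on $h_n(t) - g_n(t)$.
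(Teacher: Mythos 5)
The first two parts of your plan (the uniform sup-norm bound via the ODE supersolution $e^{Kt}\|u_0\|_\infty$, and the strict signs via the strong maximum principle and Hopf lemma) are essentially what the paper does, and they are correct.

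The third part, the quantitative bound on $h_n'$ and $-g_n'$ away from $t=0$, contains the real content of the lemma, and here your proposal has a genuine gap. You correctly identify the obstacle -- the $C^1$ norms $\|u_{0n}\|_{C^1}$ are not controlled uniformly in $n$, so a barrier comparison cannot simply be started at $t=0$ -- but the resolution you sketch does not work. You propose to establish $v_n(\tau_0/2,\cdot)\le \bar w$ on the strip $\{0\le y\le\delta\}$ by appealing to ``interior parabolic regularity applied away from $t=0$.'' However, $y=0$ corresponds to the free boundary $x=h_n(t)$: it is not an interior point, and the quantity you need to control there, namely $v_{n,y}(t,0)=-u_{n,x}(t,h_n(t))=h_n'(t)/\mu$, is exactly what the lemma asserts. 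Interior parabolic estimates away from $t=0$ control $u_n$ at a fixed positive distance from the free boundary; getting a $C^1$ bound up to $y=0$ uniformly in $n$ requires a priori control on the regularity of the free boundary itself, which in turn requires a bound on $h_n'$. The argument is circular at the point you flag as ``the technical core,'' and no mechanism is offered to break the circle.

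The paper breaks the circle with two ingredients absent from your proposal. First, it proves the separate claim that there is $n_0$ with $h_n(\tau_0)>h_0$ for all $n\ge n_0$: the boundary has moved a definite distance $\delta_0:=h_{n_0}(\tau_0)-h_0>0$ by time $\tau_0$, uniformly in $n\ge n_0$. This is established by comparing $e^{K_0t}u_n$ from below with the solution of an explicit one-phase Stefan problem for the heat equation with fixed Dirichlet data $\delta$ on a fixed lateral boundary $x=x_0$, using the monotonicity $u_n\ge u_1$ of the approximating sequence to get the positive lower bound $\delta$. Second, for each fixed $t\in[\tau_0,T_0]$ the comparison with the travelling-profile barrier $W(x-h_n(t))$ (a static ODE solution of $dW''+\bar f(W)=0$ on $[-\delta_0,0]$) is run not on a fixed strip with an ``initial'' slice, but on the tapered domain $\Omega_n=\{(\tau,x):t_n<\tau\le t,\ h_n(t)-\delta_0\le x\le h_n(\tau)\}$, where $t_n$ is chosen so that $h_n(t_n)=h_n(t)-\delta_0$. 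At $\tau=t_n$ this domain degenerates to the single point $(t_n,h_n(t_n))$, so there is no initial condition to verify at all; the parabolic boundary of $\Omega_n$ consists of the left side $x=h_n(t)-\delta_0$ (where $u_n\le C_1\le C=W(-\delta_0)$) and the free boundary $x=h_n(\tau)$ (where $u_n=0\le W(h_n(\tau)-h_n(t))$). The tapered domain, together with the preliminary claim that makes $\delta_0$ well defined uniformly in $n$, is precisely what eliminates the initial-layer matching your sketch leaves unresolved; without it the argument does not close.
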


\begin{proof}
Applying the parabolic maximum principle and the Hopf lemma to the equation of $u_n$, one immediately obtains that, for every $t>0$, 
$$u_n(t,x)>0 \hbox{ for }  g_n(t)<x<h_n(t),\,\,\,  \partial_x u_n\big(t,h_n(t)\big)<0\, \hbox{ and }\,\partial_x u_n\big(t,g_n(t)\big)>0. $$ 
It follows that $g_n'(t)<0 $ and $h_n'(t)<0$ for all $t>0$.

To find the bound $C_1$ for $u_n$, we make use of  \eqref{globalb}, and the comparison principle to obtain 
$$ u_n(t,x)\leq  \bar{u}_n(t)\,\hbox{ for }\, g_n(t)< x < h_n(t),\,0\leq t\leq T_0,$$
where $\bar{u}_n(t)$ solves
$$d \bar{u}_n /dt=K \bar{u}_n \mbox{ for } t>0;\quad \bar{u}_n(0)=\|u_{0n}\|_{C([g_0,h_0])}.  $$
Since $0\leq u_{0n}(x) \leq u_0(x) $ in  $[g_0,h_0]$ for all $n\in\N$, one can choose $C_1=\|u_{0} \|_{C([g_0,h_0])}\me^{KT_0}$, which clearly is independent of $n$.

We next show that 
\[
\mbox{$-C_2 \leq g_n'(t)$ and $h_n'(t)\leq C_2 $ for $\tau_0\leq t\leq T_0$}
\]
 with some positive constant $C_2$ which is independent of $n\in\N$. We only prove the estimate for $h_n'(t) $, since the estimate for $g_n'(t)$ can be proved analogously.

We first claim that, for any given $\tau_0$, there exists $n_0\in\N$ such that 
\begin{equation}\label{claimh}
h_{n}(\tau_0)>h_0\,\hbox{ for all }\,n\geq n_0.
\end{equation}

With $C_1$ determined above, since $f(t,x,0)=0$ and $f$ is $C^1$ in $u\in\R^+$, there exists $K_0>0$ such that
\[
f(t,x,u)\geq -K_0 u \mbox{ for } u\in [0, C_1],\; (t,x)\in\R^2.
\]
It follows that
\[
(u_n)_t-d(u_n)_{xx}\geq -K_0 u_n\; \mbox{ for } g_n(t)<x<h_n(t),\, 0\leq t\leq T_0.
\]
Hence $v_n(t,x):=e^{K_0t}u_n(t,x)$ satisfies
\[
(v_n)_t-d(v_n)_{xx}\geq 0 \mbox{ for } g_n(t)<x<h_n(t),\; 0<t\leq T_0,
\]
and
\[
g_n'(t)=-\mu (u_n)_x(t, g_n(t))\leq -\mu e^{-K_0T_0}(v_n)_x(t, g_n(t)) \mbox{ for } 0<t\leq T_0,
\]
\[
h_n'(t)=-\mu (u_n)_x(t, h_n(t))\geq -\mu e^{-K_0T_0}(v_n)_x(t, h_n(t)) \mbox{ for } 0<t\leq T_0.
\]

Since $u_n(t,x)\geq u_1(t,x)>0$ for $t\geq 0$ and $x\in (g_{01}, h_{01})$, there exists $\delta>0$ small such that
\[
u_n(t,x)\geq u_1(t,x)\geq \delta \mbox{ for } x\in [x_0-\delta, x_0+\delta]\subset (g_{01}, h_{01}),\; t\in [0, T_0],
\]
where $x_0:=(h_{01}-g_{01})/2$.

We now consider the auxiliary problem
\begin{equation}
\label{heat}
\left\{
\begin{array}{l}
w_t-dw_{xx}=0 \mbox{ for } x\in (x_0, s(t)), \; t\in (0, T_0],\\
w(t, x_0)=\delta,\; w(t, s(t))=0 \mbox{ for } t\in (0, T_0],\\
s'(t)=-\mu e^{-K_0T_0} w_x(t,s(t)) \mbox{ for } t\in (0, T_0],\\
w(0,x)=\delta \chi_{[x_0, x_0+\delta]}(x) \mbox{ for } x\in [x_0, h_0],\; s(0)=h_0.
\end{array}
\right.
\end{equation}
By \cite{chr}, \eqref{heat} has a classical solution $(w(t,x), s(t))$ and $s'(t)>0$ for $t\in (0, T_0]$. In particular, $s(\tau_0)>h_0$.

We next choose $n_0$ a large integer such that 
\[
\mbox{$h_{0n}>h_0-\min\{\delta, s(\tau_0)-h_0\}$ for $n\geq n_0$,}
\]
 and then define
\[ 
s_n(t)=s(t)-h_0+h_{0n} \mbox{ for } t\in [0, T_0],
\]
\[
w_n(t,x)=w(t, x-h_{0n}+h_0) \mbox{ for } x\in [x_0-h_0+h_{0n}, s_n(t)],\; t\in [0, T_0].
\]
By our choice of $n_0$ we have $x_{0n}:=x_0-h_0+h_{0n}\in [x_0-\delta, x_0]$ for $n\geq n_0$, and thus
\[
v_n(t,x)\geq u_n(t, x)\geq \delta \mbox{ for } t\in [0, T_0],\; x\in [x_{0n}, x_{0n}+\delta], \; n\geq n_0.
\]
Clearly $(w_n(t,x), s_n(t))$ satisfies
\[
\left\{
\begin{array}{l}
(w_n)_t-d(w_n)_{xx}=0 \mbox{ for } x\in (x_{0n}, s_n(t)), \; t\in (0, T_0],\\
w_n(t, x_{0n})=\delta,\; w_n(t, s_n(t))=0 \mbox{ for } t\in (0, T_0],\\
s_n'(t)=-\mu e^{-K_0T_0} (w_n)_x(t,s_n(t)) \mbox{ for } t\in (0, T_0],\\
w_n(0,x)=\delta \chi_{[x_{0n}, x_{0n}+\delta]}(x) \mbox{ for } x\in [x_{0n}, h_{0n}],\; s_n(0)=h_{0n}.
\end{array}
\right.
\]
Since $(v_n, h_n)$ is a super solution of the above problem, by the comparison principle, we obtain 
\[
h_n(t)\geq s_n(t)=s(t)-h_0+h_{0n} \mbox{ for } t\in (0, T_0],\; n\geq n_0.
\]
In particular,
\[
h_n(\tau_0)\geq s(\tau_0)-h_0+h_{0n}>h_0 \mbox{ for } n\geq n_0,
\]
as we claimed.
This proves \eqref{claimh}.

Next, set  $\delta_0=h_{n_0}(\tau_0)-h_0$ and consider the auxiliary problem  
\begin{equation}\label{auxielli}
dW_{xx}+\bar{f}(W)=0 \,\,\hbox{ for }\,\, -\delta_0<x<0, \quad
W(-\delta_0)=C \,\,\hbox{ and }\,\, W(0)=0,
\end{equation}  
where  $C=1+\max\{C_1, M\}$ with $M$ being the positive constant in the assumption \eqref{hyp2}, and $\bar{f}(s)$ is a function of class $C^1(\R^+)$ such that
$$\bar{f}(0)=\bar{f}(C)=0\,\, \hbox{ and }\,\, \bar{f}(s)\geq \sigma(s)f(t,x,s)\, \hbox{ for all }\, t\in\R,\,x\in\R, \,s\in [0,C],$$ 
where $\sigma(s)$ is a $C^1$  nonnegative function satisfying
\[
\sigma(s)=1 \mbox{ for } s\leq C_1,\; \sigma(C)=0.
\]
It is easy to see by a sub- and super-solution argument that problem \eqref{auxielli} admits a solution $W\in C^2([-\delta_0,0])$ such that 
$0< W(x)\leq C$ for all $-\delta_0\leq x< 0$. We now show that, for each given $t\in [\tau_0,T_0]$ and $n\geq n_0$,
\begin{equation}\label{comapp}
u_n(t,x)\leq W\big(x-h_n(t)\big) \, \hbox{ for all }\,  h_n(t)-\delta_0<x<h_n(t).
\end{equation}

For $n\geq n_0$ and fixed $t\in [\tau_0, T_0]$, since
\[
h_n(t)>h_n(t)-\delta_0\geq h_n(\tau_0)-\delta_0\geq h_{n_0}(\tau_0)-\delta_0=h_0> h_n(0),
\]
due to the monotonicity of $h_n(\tau)$ in $\tau$, there exists a unique $t_n\in (0, t)$ such that
$h_n(t_{n})=h_n(t)-\delta_0$.
We now apply the parabolic maximum principle to compare $u_n$ and $W$ over the region 
$$\Omega_n=\Big\{ (\tau,x):\, t_{n}<\tau\leq t,\, h_n(t)-\delta_0\leq x \leq h_n(\tau)\Big\}.$$
More precisely, set $\phi(\tau,x)=u_n(\tau,x)-W\big(x-h_n(t)\big)$ for $(\tau,x)\in \Omega_n$. 
It is straightforward to check that $u_n\big(t_{n}, h_n(t)-\delta_0\big)=u_n\big(t_{n}, h_n(t_n)\big)=0$, that 
$$ \phi\big(\tau, h_n(t)-\delta_0\big)= u_n\big(\tau, h_n(t)-\delta_0\big)- W(-\delta_0)\leq C_1-C\leq 0\,\hbox{ for all }\, t_{n}<\tau\leq t,$$
and that 
$$\phi\big(\tau,h_n(\tau)\big)= u_n\big(\tau, h_n(\tau)\big)- W\big(h_n(\tau)-h_n(t)\big) \leq 0\,\hbox{ for all }\, t_{n}<\tau\leq t. $$
On the other hand, by the assumptions on $\bar{f}$, it follows that there exists some  bounded function $b$ such that
\begin{align*}
\phi_{\tau}-d\phi_{xx} &= \sigma(u_n)f(\tau,x,u_n)-\bar{f}(W)\\
&\leq \sigma(u_n) f(\tau,x, u_n)-\sigma(W)f(\tau,x, W)\\
&=b(\tau,x)\phi\;\; \mbox{ for $(\tau,x)\in \Omega_n$.}
\end{align*}
One thus concludes from the parabolic maximum principle that $u_n(\tau, x)\leq W\big(x-h_n(t)\big)$ for any $(\tau,x)\in \Omega_n$.
This in particular implies the inequality \eqref{comapp} by choosing $\tau=t$.

To complete the proof, notice that $u_n\big(t,h_n(t)\big)= W(0)=0$. It then follows from \eqref{comapp} that $\partial_x u_n\big(t, h_n(t)\big)\geq W'(0)$ for all $n\geq n_0$, whence $-\mu^{-1} h'_n(t)\geq W'(0)$. This implies that $h'_n(t) \leq -\mu W'(0)$ for all $\tau_0\leq t\leq T_0$ and $n\geq n_0$.  By setting 
$$C_2=\max\big\{-\mu W'(0),\max_{0\leq n\leq n_0,\, \tau_0\leq t\leq T_0} h'_n(t)\big\},$$
one thus gets that $h'_n(t) \leq C_2$ for all $\tau_0\leq t\leq T_0$, $n\in\N$, and that $C_2$ only depends on $T_0$, $\tau_0$, $f$ and $\|u_0\|_{C([g_0,h_0])}$. The proof of Lemma~\ref{estderi} is thereby complete.
\end{proof}

\begin{lem}\label{est-g_n-h_n}
Let $g_n$ and $h_n$ be as in Lemma \ref{estderi}. Then for any given $T_0>0$,
 there exists some positive constant $H$ independent of $n$ such that 
\begin{equation}\label{boundfhn}
h_{0n}\leq h_n(t)\leq h_{0n}+Ht^{1/2}\quad\hbox{and}\quad  g_{0n}-Ht^{1/2}\leq g_n(t)\leq g_{0n} \,\,\,\hbox{ for all } \,\, 0\leq t\leq T_0.
\end{equation}
\end{lem}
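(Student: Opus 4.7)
The lower bound $h_n(t)\geq h_{0n}$ (and symmetrically $g_n(t)\leq g_{0n}$) is already implicit in Lemma~\ref{estderi}: the Hopf lemma at the free boundary combined with the Stefan condition gives $h_n'(t)>0$ and $g_n'(t)<0$. The content of the lemma is the matching upper bound $h_n(t)\leq h_{0n}+Ht^{1/2}$ for some $H$ \emph{independent of} $n$.

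The plan is to construct an explicit free-boundary super-solution $(\bar u,\bar h)$ sitting to the right of $h_{0n}$, whose front $\bar h$ already grows like $\sqrt t$, and then to run a first-contact comparison. I would take the ansatz
\[
\bar h(t):=h_{0n}+H\sqrt t,\qquad \bar u(t,x):=C_1\me^{Kt}\Bigl(1-\frac{x-h_{0n}}{H\sqrt t}\Bigr)\quad\text{for }h_{0n}\leq x\leq\bar h(t),\;t\in(0,T_0],
\]
where $C_1$ is the $n$-independent $L^\infty$ bound on $u_n$ from Lemma~\ref{estderi} and $K$ is the constant in \eqref{globalb}. Since $\bar u_{xx}\equiv 0$, straightforward differentiation gives $\bar u_t-d\bar u_{xx}=K\bar u+\frac{C_1\me^{Kt}(x-h_{0n})}{2Ht^{3/2}}\geq K\bar u\geq f(t,x,\bar u)$; the boundary relations $\bar u(t,h_{0n})=C_1\me^{Kt}\geq C_1\geq u_n(t,h_{0n})$ and $\bar u(t,\bar h(t))=0$ are automatic; and the Stefan inequality $\bar h'(t)\geq-\mu\bar u_x(t,\bar h(t))$ reduces to the algebraic condition $H^2\geq 2\mu C_1\me^{KT_0}$. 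Any such $H$ depends only on $\mu$, $d$, $K$, $C_1$ and $T_0$, hence is $n$-independent.

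The comparison itself is the standard first-contact argument: let $t^*:=\sup\bigl\{t\in(0,T_0]:h_n(s)\leq\bar h(s)\text{ for all }s\in(0,t]\bigr\}$. On the parabolic region $\{(s,x):s\in(0,t^*],\,h_{0n}\leq x\leq\min(h_n(s),\bar h(s))\}$, the parabolic maximum principle applied to $\bar u-u_n$ (whose parabolic boundary inequalities I verified above) yields $\bar u\geq u_n$. If $t^*<T_0$ then $\bar h(t^*)=h_n(t^*)$, so $\bar u-u_n$ attains the value zero at the boundary point $(t^*,\bar h(t^*))$; Hopf's lemma then forces $u_{n,x}(t^*,h_n(t^*))>\bar u_x(t^*,\bar h(t^*))$, and the two Stefan conditions convert this into $h_n'(t^*)<\bar h'(t^*)$, contradicting the definition of $t^*$. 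Hence $t^*=T_0$ and we obtain the desired bound; a symmetric construction to the left of $g_{0n}$ yields $g_n(t)\geq g_{0n}-H\sqrt t$ after possibly enlarging $H$.

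The step I expect to be the main obstacle is ensuring that the supremum defining $t^*$ is taken over a nonempty set -- both $\bar h$ and $h_n$ emanate from $h_{0n}$ at $t=0$, and $\bar u$ itself degenerates there. The saving grace of the $\sqrt t$ ansatz is that $\bar h'(t)=H/(2\sqrt t)\to\infty$ as $t\to 0^+$, whereas for each fixed $n$ the approximation $u_{0n}\in C^2$ ensures through \cite{dlin} that $h_n\in C^1([0,T_0])$ with $h_n'(0^+)$ finite (though possibly $n$-dependent). This guarantees $\bar h(t)>h_n(t)$ on some initial interval $(0,t_1)$, so the set is nonempty; crucially $H$ itself does not depend on $n$, which is exactly what the lemma asserts. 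A cleaner alternative that avoids the degenerate initial time altogether is to compare instead with the shifted barrier $\bar h_\epsilon(t):=h_{0n}+\epsilon+H\sqrt t$ (and a correspondingly shifted $\bar u_\epsilon$), run the first-contact analysis on $[0,T_0]$, and then send $\epsilon\to 0$.
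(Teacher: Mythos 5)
Your proof is correct, but it follows a genuinely different route from the paper. The paper handles the upper bound by comparison with an auxiliary degenerate one-phase Stefan problem whose left boundary carries a constant Dirichlet value $\tilde C$ and whose Stefan coefficient is inflated to $\mu\me^{KT_0}$ (to absorb the reaction term via the substitution $u\mapsto \me^{-Kt}u$); the crucial $t^{1/2}$-H\"older regularity of the front at $t=0$, with a constant $H$ independent of $n$, is then imported wholesale from Cannon--Hill \cite{ch,chr}, and two applications of the comparison principle (one for the free boundary problem, one from \cite[Theorem 2]{ch}) finish the job. You instead build a fully explicit barrier $\bar u(t,x)=C_1\me^{Kt}\bigl(1-(x-h_{0n})/(H\sqrt t)\bigr)$ on $[h_{0n},h_{0n}+H\sqrt t]$, verify the three super-solution inequalities by hand (the only non-trivial one being $H^2\geq 2\mu C_1\me^{KT_0}$), and run a first-contact argument. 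Your approach is more self-contained and elementary --- it does not rely on quoting a regularity theorem for a degenerate Stefan problem --- and it makes the source of the $\sqrt t$ rate transparent. The trade-off is precisely the technical point you flag at the end: your barrier degenerates as $t\to 0^+$, so you must either observe (as you do) that $\bar h'(0^+)=+\infty$ while $h_n'(0^+)$ is finite for each fixed $n$ to get the first-contact set off the ground, or run the $\epsilon$-shifted version and pass to the limit. The paper's citation of Cannon--Hill sidesteps this delicate corner entirely, at the cost of being less explicit. One minor slip: the $H$ you produce depends on $\mu$, $K$, $C_1$, $T_0$ but not on $d$ (since your barrier is linear in $x$), so mentioning $d$ in the list of dependencies is harmless but unnecessary.
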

\begin{proof}
For any given $T_0>0$ and each $n\in\N$, consider the following free boundary problem 
\begin{equation}\label{degenrate}\left\{\baa{l}
\partial_t v_n=d\partial_{xx}v_n, \quad h_{0n}<x<\tilde{h}_n(t),\,\,\,0<t\leq T_0,\vspace{3pt}\\
v_n(t,h_{0n})=\tilde{C}, \,\,\, v_n\big(t,\tilde{h}_n(t)\big)=0,\quad 0<t\leq T_0,\vspace{3pt}\\
 \tilde{h}_n'(t)=-\mu \me ^{KT_0}\partial_xv_n(t,\tilde{h}_n(t)),\quad 0<t\leq T_0, \vspace{3pt}\\
\tilde{h}_n(0)= h_{0n},\eaa\right.
\end{equation} 
where $K$ is the positive constant given in \eqref{globalb}, and $\tilde{C}$ is some positive constant to be chosen independent of $n$ later. It follows from \cite[Theorem 1]{ch} that problem \eqref{degenrate} admits a unique classical solution $(v_n, \tilde{h}_n)$ with $\tilde{h}_n\in  C^1((0,T_0])$ and  $\tilde{h}_n$ being H\"{o}lder continuous at $t=0$ with exponent $1/2$. Namely, there exists some positive constant $H$  such that 
\begin{equation}\label{auxihn}
h_{0n}\leq \tilde{h}_n(t)\leq h_{0n}+Ht^{1/2}\,\hbox{ for all }\, 0\leq t\leq T_0.
\end{equation}
Furthermore, for any $n_1\in\N$ and $n_2\in\N$,  it is straightforward to check that $\big(v_{n_1}(t,x-h_{0n_2}+h_{0n_1}), \tilde{h}_{n_1}(t)+h_{0n_2}-h_{0n_1}\big)$ is the solution of problem \eqref{degenrate} with $n=n_2$. Thus, by the uniqueness of such solutions, one concludes that $H$ is independent of $n$.

Next, for any fixed $n\in\N$, due to the assumption \eqref{globalb}, it is easy to see from the comparison principle for problem \eqref{eqf} with smooth initial values (see, e.g., \cite[Lemma~5.7]{dlin}) that
\begin{equation}\label{auxihn2}
\bar{g}_{n}(t)\leq g_n(t),\,\,\,\bar{h}_{n}(t)\geq h_n(t)\,\hbox{ for all }\,0<t\leq T_0,
\end{equation}
and 
\begin{equation*}
0< u_n(t,x)  \leq \me^{Kt}\bar{u}_{n}(t,x)\,\hbox{ for  all } \, g_n(t)<x < h_n(t),\,\,\,0<t\leq T_0,
 \end{equation*}
where $(\bar{u}_{n},\bar{g}_{n}, \bar{h}_{n})$ is the classical solution of the following free boundary problem
\begin{equation*}\left\{\baa{ll}
\partial_t\bar{u}_{n}=d\partial_{xx}\bar{u}_{n},& \bar{g}_{n}(t)<x<\bar{h}_{n}(t),\quad 0<t\leq T_0,\vspace{3pt}\\
\bar{u}_{n}(t,\bar{g}_{n}(t))=\bar{u}_{n}(t,\bar{h}_{n}(t))=0,& 0<t\leq T_0,\vspace{3pt}\\
\bar{g}_{n}'(t)=-\mu\me^{KT_0} \partial_x\bar{u}_{n}(t,\bar{g}_{n}(t)),&0<t\leq T_0, \vspace{3pt}\\
\bar{h}_{n}'(t)=-\mu \me^{KT_0}\partial_x\bar{u}_{n}(t,\bar{h}_{n}(t)),&0<t\leq T_0, \vspace{3pt}\\
\bar{g}_{n}(0)=g_{0n},\,\,\, \bar{h}_{n}(0)=h_{0n},\,\,\, u(0,x)=u_{0n}(x),& g_{0n}\leq x\leq h_{0n}.\eaa\right.
\end{equation*} 
Since $\bar{u}_{n}(t,x)$ is uniformly bounded for $\bar{g}_n(t)\leq x \leq  \bar{h}_n(t)$, $0\leq t\leq T_0$, one finds some $\tilde{C}>0$ such that 
$$\bar{u}_{n}(t,h_{0n})\leq \tilde{C}\,\hbox{ for all }\, 0\leq t\leq T_0,\, n\in\N. $$
It then follows directly from the comparison principle for problem \eqref{degenrate} established in \cite[Theorem~2]{ch} that 
$$\bar{h}_n(t)\leq \tilde{h}_n(t)\,\hbox{ for all }\,0\leq t\leq T_0,\,n\in\N.$$ 
This together with \eqref{auxihn} and \eqref{auxihn2} implies that $h_{0n}\leq h_n(t)\leq h_{0n}+Ht^{1/2}$ for all $0\leq t\leq T_0$ and $n\in\N$. In a similar way, one can prove the corresponding estimate for $g_n$ in \eqref{boundfhn}.
\end{proof}

Next, we prove that the limit $(u,g,h)$ given in \eqref{mconvergh} and \eqref{mconveru} is a classical solution for problem \eqref{eqf} over $G_T$ for some $T>0$. We prove this in the next two lemmas.

 \begin{lem}\label{locex}
Let  $\big(u(t,x),g(t),h(t)\big)$ be the limit given in \eqref{mconvergh} and \eqref{mconveru}. Then there is $T>0$ such that for $t\in (0, T]$, the first four equations in \eqref{eqf} are satisfied by  $\big(u(t,x),g(t),h(t)\big)$. 
\end{lem}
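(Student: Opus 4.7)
The plan is to pass to the limit in the PDE and boundary relations satisfied by the approximating triples $(u_n,g_n,h_n)$, exploiting the uniform bounds already established in Lemmas~\ref{estderi} and~\ref{est-g_n-h_n}. First I would fix $T>0$ small enough that the limiting free boundaries remain well separated: from Lemma~\ref{est-g_n-h_n} together with $h_{0n}\to h_0$, $g_{0n}\to g_0$ one obtains $h_0\leq h(t)\leq h_0+Ht^{1/2}$ and $g_0-Ht^{1/2}\leq g(t)\leq g_0$, so choosing, say, $T=(h_0-g_0)^2/(64H^2)$ ensures $h_n(t)-g_n(t)\geq (h_0-g_0)/4$ on $[0,T]$ for all large $n$. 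This controls the geometry of $G_T$ uniformly in $n$.

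The next step is to straighten the moving domain by the change of variables $y=(2x-h_n(t)-g_n(t))/(h_n(t)-g_n(t))\in[-1,1]$, setting $\tilde u_n(t,y)=u_n(t,x(t,y))$. The equation in \eqref{eqf} becomes a parabolic equation on the fixed cylinder $[0,T]\times[-1,1]$ of the form $\tilde u_{n,t}=a_n(t)\tilde u_{n,yy}+b_n(t,y)\tilde u_{n,y}+f(t,x(t,y),\tilde u_n)$, with zero Dirichlet data at $y=\pm 1$; here $a_n=4d/(h_n-g_n)^2$ and $b_n$ depends on $h_n,g_n,h_n',g_n'$, all of which are uniformly bounded on $[\tau_0,T]$ for every $\tau_0>0$ by Lemmas~\ref{estderi} and~\ref{est-g_n-h_n}, while $a_n$ is additionally bounded below. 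Parabolic $L^p$ theory up to $y=\pm 1$ then yields uniform $W^{2,p}$ bounds on $\tilde u_n$, which embed into $C^{(1+\alpha)/2,1+\alpha}$; in particular $\partial_y\tilde u_n(t,\pm 1)$ is uniformly H\"older in $t$, and through the Stefan condition this upgrades $g_n,h_n$ to be uniformly bounded in $C^{1+\alpha/2}([\tau_0,T])$. Once the coefficients $a_n,b_n$ inherit uniform H\"older bounds, standard boundary parabolic Schauder estimates deliver uniform $C^{1+\alpha/2,2+\alpha}([\tau_1,T]\times[-1,1])$ bounds on $\tilde u_n$ for each $\tau_1>\tau_0>0$.

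With these bounds in hand, Arzel\`a--Ascoli and a diagonal extraction give a subsequence along which $\tilde u_n\to\tilde u$ in $C^{1,2}$ on compact subsets of $(0,T]\times[-1,1]$ and $(g_n,h_n)\to(g,h)$ in $C^1_{\rm loc}((0,T])$; since the pointwise limits are already determined by \eqref{mconvergh}--\eqref{mconveru}, the whole sequence converges. Setting $u(t,x)=\tilde u(t,y(t,x))$ on $G_T\cap\{t>0\}$, I would then pass to the limit in the transformed PDE, in the Dirichlet conditions $\tilde u_n(t,\pm 1)=0$, and in the Stefan relations $h_n'(t)=-2\mu(h_n-g_n)^{-1}\partial_y\tilde u_n(t,1)$ and $g_n'(t)=2\mu(h_n-g_n)^{-1}\partial_y\tilde u_n(t,-1)$, which together yield the first four equations of \eqref{eqf} for $(u,g,h)$ on $(0,T]$.

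The main obstacle is the bootstrap at the start of the Schauder argument: because $u_0$ is only continuous I cannot expect estimates uniform down to $t=0$, so I must settle for $C^{1+\alpha/2,2+\alpha}$ control on $[\tau_1,T]\times[-1,1]$ with constants that blow up as $\tau_1\to 0$, and the two-stage $L^p$-then-Schauder bootstrap seems unavoidable because Lemma~\ref{estderi} supplies only $L^\infty$ bounds for $g_n',h_n'$, not H\"older ones. This same lack of regularity up to $t=0$ is the reason the initial condition $u(0,x)=u_0(x)$ is omitted from the statement of Lemma~\ref{locex} and will be addressed by a separate continuity argument.
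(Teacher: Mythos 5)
Your proposal is correct and follows the same overall strategy as the paper---straighten the free boundary, run an $L^p$-then-Schauder bootstrap away from $t=0$, and pass to the limit---but it uses a genuinely different change of variables. The paper follows \cite{cf,dlin} and sets $x=\phi_n(t,y)=y+\xi_+(y)(h_n(t)-h_{0n})+\xi_-(y)(g_n(t)-g_{0n})$ with fixed cutoffs $\xi_\pm$, so the map is the identity away from the boundaries and the domain $[g_{0n},h_{0n}]$ still varies with $n$; you instead rescale linearly onto the fixed cylinder $[-1,1]$. Your version is simpler to write down and makes the final step slightly cleaner, because the Dirichlet conditions $\tilde u_n(t,\pm 1)=0$ and the Stefan relations live at the fixed endpoints $y=\pm1$ for every $n$ and pass to the limit directly, whereas the paper must invoke the Lipschitz estimates $|u_n(t,x)|\le C_6|x-g_n(t)|$ and $|(u_n)_x(t,x)-(u_n)_x(t,g_n(t))|\le C_6|x-g_n(t)|$ from~\eqref{applocess} and push $x\to g(t)$ because the evaluation points $g_n(t),h_n(t)$ still move. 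The paper's transformation, by contrast, leaves the PDE unperturbed in the interior and keeps the coefficients' dependence confined to the drifts $h_n(t)-h_{0n}$ and $g_n(t)-g_{0n}$, which matches the setup in \cite{dlin}. Both routes rely on the same ingredients: the uniform $L^\infty$ and gradient bounds of Lemma~\ref{estderi}, the H\"older-$1/2$ bound of Lemma~\ref{est-g_n-h_n} to fix $T$ small, and then a two-stage bootstrap ($L^p$ to get $C^{(1+\alpha)/2,1+\alpha}$ hence $g_n,h_n\in C^{1+\alpha/2}$ and H\"older coefficients, then Schauder to reach $C^{1+\alpha/2,2+\alpha}$) on $[\tau,T]$ with $\tau>0$, exactly as you describe. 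One small slip: with your scaling $y_x=2/(h_n-g_n)$, the Stefan condition at the left endpoint should read $g_n'(t)=-2\mu(h_n-g_n)^{-1}\partial_y\tilde u_n(t,-1)$, not $+$; this is just a sign typo and does not affect the argument.
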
 

\begin{proof}
We adopt the notations $u_n$, $g_n$, $h_n$, $T_0$, $C_1$ and $C_2$ used in Lemma~\ref{estderi}.
  We first straighten the free boundaries of problem~\eqref{eqf}~as in \cite{cf,dlin}. Without loss of generality, we assume that $g_{0}<0<h_{0}$. Then there is some $n_0\in\N$ such that for $n\geq n_0$,
there holds $g_{0n}<0<h_{0n}$, and there exist functions $\xi_+, \xi_-\in C^3(\R)$ satisfying 
$$\xi_+(y)=1 \,\hbox{ if }\,  |y-h_{0n}|< \frac{h_{0}}{4}, \,\,\,\xi_+(y)=0 \, \hbox{ if }\,  |y-h_{0n}|> \frac{h_{0}}{2},\,\,\, |\xi_+'(y)|<\frac{3}{h_{0}} \,\hbox{ for }\, y\in\R, $$
and 
$$\xi_-(y)=1 \,\hbox{ if }\,  |y-g_{0n}|< -\frac{g_{0}}{4}, \,\,\,\xi_-(y)=0 \, \hbox{ if }\,  |y-g_{0n}|> -\frac{g_{0}}{2},\,\,\, |\xi_-'(y)|<-\frac{3}{g_{0}} \,\hbox{ for }\, y\in\R. $$

For any fixed $n\geq n_0$, consider the transformation $(t,y)\to (t,x)$ given by 
$$ x=\phi_n(t,y):=y+\xi_+(y)(h_n(t)-h_{0n})+\xi_-(y)(g_n(t)-g_{0n}) \,\hbox{ for }\, 0\leq t\leq T_0,\, y\in\R. $$
Due to the inequalities in \eqref{boundfhn}, there is a positive constant $T\leq T_0$ (independent of $n$) small enough such that 
\begin{equation*}
| h_n(t)- h_{0n}  | \leq \frac{h_{0}}{8}\,\,\, \hbox{and}\,\,\,  | g_n(t)-g_{0n}  | \leq -\frac{g_{0}}{8}\, \hbox{ for all }\, t\in [0, T],\,n\geq n_0,  
\end{equation*}
whence the above transformation is a diffeomorphism from $[0,T]\times\R$ to $[0,T]\times\R$. Moreover, under this transformation, the free boundaries $x=h_n(t)$, $x=g_n(t)$ correspond to the straight lines $y=h_{0n}$ and  $y=g_{0n}$, respectively. 

Set 
$$w_n(t,y):=u_n\big(t, \phi_n(t,y)\big)$$
and
\begin{equation*}
\left.\begin{array}{rl}
\displaystyle {A_n(t,y)} :=& \displaystyle\frac{1}{1+\xi_+'(y)(h_n(t)- h_{0n})+\xi_-'(y)(g_n(t)-g_{0n})},\vspace{4pt}\\
\displaystyle B_n(t,y):=& \displaystyle\frac{\xi_+''(y)(h_n(t)- h_{0n})+\xi_-''(y)(g_n(t)-g_{0n})}{[1+\xi_+'(y)(h_n(t)- h_{0n})+\xi_-'(y)(g_n(t)-g_{0n})]^3},\vspace{4pt}\\
\displaystyle C_n(t,y):=&\displaystyle  \frac{h_n'(t)\xi_+(y)+g_n'(t)\xi_-(y)}{1+\xi_+'(y)(h_n(t)- h_{0n})+\xi_-'(y)(g_n(t)-g_{0n})}. \end{array}\right.
\end{equation*}
Then a simple calculation gives
\[
(u_n)_t=(w_n)_t-C_n(w_n)_y,\; (u_n)_x={A_n}(w_n)_y,
\]
 and 
\[
(u_n)_{xx}=A_n^2(w_n)_{yy}-B_n(w_n)_y,
\]
 whence $w_n$ satisfies
\begin{equation}\label{inibdeq}
\left\{\baa{ll}
(w_n)_t- dA_n^2(w_n)_{yy}+\big(dB_n-C_n\big)(w_n)_y \;\; &  \vspace{3pt}\\
\;\;\;\;\;\;\;\; \;\;\;\; \;\;\;\; =f\big(t,\phi_n(t,y), w_n\big),\; &(t,y)\in (0,T]\times(g_{0n}, h_{0n}),  \vspace{3pt}\\
w_n(t,h_{0n})=w_n(t,g_{0n})=0,\;\; & 0<t\leq T,\vspace{3pt}\\
w_n(0,y)=u_{0n}(y),\;\; & g_{0n}\leq y\leq h_{0n},\eaa\right.
\end{equation}
and $g_n,\,h_n$ satisfy, due to $A_n(t,y)=1$ for  $y\in\{g_{0n}, h_{0n}\}$,
\begin{equation}\label{inibdbd}\left\{\baa{l}
h_n'(t)=-\mu (w_n)_y(t,h_{0n}), \,\,\, 0<t\leq T,\vspace{3pt}\\
g_n'(t)=-\mu (w_n)_y(t,g_{0n}), \,\,\, 0<t\leq T,\vspace{3pt}\\
h_n(0)=h_{0n},\,\,\, g_n(0)=g_{0n}.\eaa\right.
\end{equation}

Next, we show some further estimates for $(w_n, g_n, h_n)$.
It follows from Lemma~\ref{estderi} that $w_n(t,y)$ is positive and uniformly bounded with respect to $n\in\N$ in $(t,y)\in [0,T]\times [g_{0n},h_{0n}]$. Moreover, the coefficients $A_n(t,y)$, $B_n(t,y)$ and $C_n(t,y)$ are all uniformly bounded and continuous in $(t,y)\in [\tau,T]\times (g_{0n},h_{0n})$ for any given $0<\tau<2\tau<T$. 
Then by applying parabolic $L^p$ theory (see, e.g., \cite[Theorem 7.15]{lieb}) and then Sobolev imbedding theorem, one obtains $w_n\in C^{(1+\alpha)/2,1+\alpha}([\tau,T]\times[g_{0n},h_{0n}])$, and 
\begin{equation*}
\big\| w_n \big\|_{C^{(1+\alpha)/2,1+\alpha}([\tau,T]\times[g_{0n},h_{0n}])} \leq C_3 \quad \hbox{for all }\, n\geq n_0,
\end{equation*}
where $C_3$ is a positive constant depending on $\tau$, $T$, $h_0-g_0$, $\| u_{0} \|_{C([g_{0},h_{0}])}$, $C_1$ and $C_2$ (which are given in Lemma~\ref{estderi}).  This together with~\eqref{inibdbd} implies that $g_n,\,h_n\in C^{1+\alpha/2}([\tau,T])$, and there exists $C_4>0$ independent of $n$ such that
\begin{equation*}
\big\| g_n \big\| _{C^{1+\alpha/2}([\tau,T])}\leq C_4,\quad \big\| h_n \big\| _{C^{1+\alpha/2}([\tau,T])}\leq C_4\quad \hbox{for all } \, n\geq n_0.
\end{equation*}
This implies that $\phi_n(t,y), A_n(t,y), B_n(t,y)$ and $C_n(t,y)$ are functions in $C^{\alpha/2, \alpha}([\tau, T]\times \R)$ and their norms in this space  have a bound independent of $n$.
We may now apply the parabolic Schauder estimates to problem~\eqref{inibdeq}, to obtain that $w_n\in C^{1+\alpha/2,2+\alpha}([2\tau,T]\times[g_{0n},h_{0n}])$, and 
\begin{equation*}
\big\| w_n \big\|_{C^{1+\alpha/2,2+\alpha}([2\tau,T]\times[g_{0n},h_{0n}])} \leq C_5 \quad \hbox{for all }\, n\geq n_0,
\end{equation*}
for some constant $C_5$ independent of $n$. 
Thus, one has
\begin{equation}\label{applocess}
\big\| u_n \big\|_{C^{1+\alpha/2,2+\alpha}(G_T^{2\tau})} + \big\| g_n \big\| _{C^{1+\alpha/2}([2\tau,T])}+ \big\| h_n \big\| _{C^{1+\alpha/2}([2\tau,T])}\leq C_6\quad \hbox{for any } \, n\geq n_0,
\end{equation}
for some positive constant $C_6$ independent of $n$, where  
$$G_{T,n}^{2\tau}=\big\{(t,x)\in\R^2:\,t\in[2\tau,T],\,x\in[g_n(t),h_n(t)] \big\}.$$

Finally, by using a diagonal argument and the convergences \eqref{mconvergh}, \eqref{mconveru}, one sees that $(u,g,h)\in C^{1,2}(G_T)\times C^1((0,T])\times C^1((0,T])$, and that
\begin{equation*}
u_n\to u\,\hbox{ in }\, C_{loc}^{1,2}(G_T) \, \hbox{ as }\, n\to\infty, 
\end{equation*}
\begin{equation*}
g_n\to g\, \hbox{ and }  h_n\to h \,\hbox{ in }\, C_{loc}^1((0,T])  \, \hbox{as }\, n\to\infty.
\end{equation*}
In particular, this implies that 
\begin{equation*}
u_t=du_{xx}+f(t,x,u)\,\, \hbox{ for all }\, g(t)<x<h(t),\,0<t<T.
\end{equation*}
Furthermore, for any $t\in [2\tau, T]$ and $x\in (g(t), f(t))$, there exists $n_1\in\N$ such that $x\in \big(g_n(t),h_n(t)\big)$ for all $n\geq n_1$, whence
\begin{equation*}
\big|u_n(t,x)\big|\leq C_6\big|x-g_n(t)\big|, \,\,\,\big|(u_n)_x(t,x)- (u_n)_x(t,g_n(t)) \big|\leq C_6\big|x-g_n(t)\big|\,\, \hbox{ for all }\,n\geq n_1,
\end{equation*}
where $C_6$ is the positive constant given in \eqref{applocess} (independent of $n$).
Passing to the limit  $n\to\infty$ in the first inequality gives that $\big|u(t,x)\big|\leq C_6\big|x-g(t)\big|$, which clearly implies $u(t,g(t))=0$. 
Similarly, due to $(u_n)_x(t, g_n(t))=-\mu^{-1} g_n'(t)$, passing to the limit  $n\to\infty$ followed by letting $x\to g(t)$ in the second inequality yields that $u_x(t,g(t))=-1/\mu g'(t)$. Since $\tau$ can be chosen arbitrarily in $(0,T/2]$, one thus obtains that
\begin{equation*}
u(t,g(t))=0\,\hbox{ and }\, g'(t)=-\mu u_x(t,g(t))\quad \hbox{for all }\,\, 0<t<T.
\end{equation*}
In a similar way, one concludes that
\begin{equation*}
u(t,h(t))=0\,\hbox{ and }\, h'(t)=-\mu u_x(t,h(t))\,\, \hbox{ for all }\, 0<t<T.
\end{equation*}
The proof is complete.
\end{proof}

\begin{lem}\label{ini-cond} The triple  $(u,g,h)$ in Lemma \ref{locex} also satisfies the initial conditions in 
 \eqref{eqf}. That is,
\begin{equation}\label{coninuousgh}
\lim_{t\to 0} g(t)=g_0,\quad \quad\lim_{t\to 0} h(t)=h_0,
\end{equation}
and for any $x_0\in[g_0,h_0]$,
\begin{equation}\label{continuousu}
\lim_{(t,x)\in G_T, t\to0,x\to x_0} u(t,x)=u_0(x_0).
\end{equation}
\end{lem}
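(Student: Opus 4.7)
The plan has two stages: first derive \eqref{coninuousgh} from the $n$-uniform bounds on $g_n,h_n$, and then establish \eqref{continuousu} via matching $\liminf$/$\limsup$ estimates along an arbitrary sequence $(t_k,x_k)\in G_T$ with $(t_k,x_k)\to(0,x_0)$.

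For \eqref{coninuousgh}, I would pass to the limit in the estimates of Lemma~\ref{est-g_n-h_n}: since $h_n$ is nondecreasing in $n$ and $h_n(t)\le h_{0n}+Ht^{1/2}\le h_0+Ht^{1/2}$, while $h_n(t)\ge h_{0n}\to h_0$, the pointwise limit $h(t)=\lim_n h_n(t)$ satisfies $h_0\le h(t)\le h_0+Ht^{1/2}$. Letting $t\to 0$ yields $h(t)\to h_0$; the proof that $g(t)\to g_0$ is symmetric.

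For the lower bound in \eqref{continuousu}, suppose first that $x_0\in(g_0,h_0)$. Then $x_0\in(g_{0n},h_{0n})$ for all large $n$, and the continuity of $g_n,h_n$ up to $t=0$ combined with $(t_k,x_k)\to(0,x_0)$ forces $x_k\in(g_n(t_k),h_n(t_k))$ for all sufficiently large $k$. Since $u_n$ is a classical solution continuous at $(0,x_0)$ with $u_n(0,x_0)=u_{0n}(x_0)$, we get $u_n(t_k,x_k)\to u_{0n}(x_0)$. Combining with $u(t,x)\ge u_n(t,x)$ from the monotone convergence \eqref{mconveru}, and then with the uniform convergence $u_{0n}\to u_0$ in $C([g_0,h_0])$, gives $\liminf_{k\to\infty}u(t_k,x_k)\ge u_0(x_0)$. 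When $x_0\in\{g_0,h_0\}$, $u_0(x_0)=0$ and the bound is trivial.

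For the upper bound, I would sandwich $u$ from above by a classical solution with $C^2$ initial data. Given $\epsilon>0$, construct $\tilde u_0^{\epsilon}\in C^2([g_0-1,h_0+1])$ that is positive on the open interval, vanishes at the endpoints, satisfies $\tilde u_0^\epsilon\ge u_0$ on $[g_0,h_0]$, and satisfies $\tilde u_0^\epsilon(x_0)\le u_0(x_0)+\epsilon$. Such a function is readily obtained by taking a continuous dominator of $u_0$ that equals $u_0(x_0)+\epsilon/2$ in a small neighborhood of $x_0$ (using continuity of $u_0$), smoothly tapering to $0$ at $g_0-1$ and $h_0+1$, and then mollifying with a sufficiently narrow kernel. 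Let $(\tilde u^{\epsilon},\tilde g^{\epsilon},\tilde h^{\epsilon})$ denote the corresponding classical solution of \eqref{eqf} provided by \cite[Theorem~5.1]{dlin}. Since $[g_{0n},h_{0n}]\subset[g_0-1,h_0+1]$ and $u_{0n}\le u_0\le \tilde u_0^\epsilon$ on $[g_{0n},h_{0n}]$, the comparison principle for smooth initial data (\cite[Lemma~5.7]{dlin}) gives $u_n\le \tilde u^{\epsilon}$ on the domain of $u_n$ for every $n$; passing $n\to\infty$ yields $u\le \tilde u^{\epsilon}$ throughout $G_T$. Because $x_0$ lies in the interior of $(g_0-1,h_0+1)$, standard parabolic regularity for \eqref{eqf} with smooth initial data makes $\tilde u^\epsilon$ continuous at $(0,x_0)$ with value $\tilde u_0^\epsilon(x_0)$; hence $\limsup_{k\to\infty}u(t_k,x_k)\le \tilde u_0^\epsilon(x_0)\le u_0(x_0)+\epsilon$. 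Sending $\epsilon\to 0$ closes the sandwich.

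The main obstacle I expect is precisely the upper bound: the free boundaries make a purely local barrier construction at $(0,x_0)$ awkward, since the size of $u$ near $(0,x_0)$ is influenced by the global initial profile and by how quickly the front moves. The trick of comparing with a smooth supersolution whose initial support is slightly enlarged turns $x_0$ into an interior point of the supersolution's initial domain, where classical $t=0$ continuity is available. The only delicate case is $x_0\in\{g_0,h_0\}$ (where $u_0=0$), but a smooth tapered perturbation of size $\epsilon$ handles it uniformly.
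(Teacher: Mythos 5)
Your treatment of \eqref{coninuousgh} and of the lower bound in \eqref{continuousu} matches the paper's proof. The upper bound is where you diverge, and your route is genuinely different but still correct. The paper dominates $u$ by the solution $\bar u$ of a \emph{linear} initial-boundary value problem $\bar u_t = d\bar u_{xx} + K\bar u$ on the fixed rectangle $[0,T]\times[g(T),h(T)]$ with initial data the zero-extension $\bar u_0$ of $u_0$; continuity of $\bar u$ up to $t=0$ is then a textbook fact for linear parabolic equations on a cylinder, and $\limsup u(t_m,x_m)\le \bar u_0(x_0)=u_0(x_0)$ follows in one stroke, uniformly in $x_0$ and with no auxiliary $\epsilon$. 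You instead build, for each $\epsilon>0$ and $x_0$, a $C^2$ initial datum $\tilde u_0^\epsilon$ on the enlarged interval $[g_0-1,h_0+1]$ with $\tilde u_0^\epsilon\ge u_0$ and $\tilde u_0^\epsilon(x_0)\le u_0(x_0)+\epsilon$, and invoke the comparison principle \cite[Lemma~5.7]{dlin} for the full \emph{free boundary} problem plus the $C(\overline{G_T})$ regularity of classical solutions with $C^2$ data. This works: the construction of $\tilde u_0^\epsilon$ is elementary (uniform continuity of $u_0$ plus a margin before mollifying), $[g_{0n},h_{0n}]\subset[g_0-1,h_0+1]$ and $u_{0n}\le u_0\le\tilde u_0^\epsilon$ justify the comparison for each $n$, the monotone limit gives $u\le\tilde u^\epsilon$ on $G_T$, and $x_0$ being interior to $(g_0-1,h_0+1)$ makes $\tilde u^\epsilon(t_k,x_k)\to\tilde u_0^\epsilon(x_0)\le u_0(x_0)+\epsilon$; sending $\epsilon\to0$ closes the argument, including the boundary case $x_0\in\{g_0,h_0\}$. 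The trade-off is that the paper's barrier sidesteps the free boundary entirely (hence needs no $\epsilon$-approximation and no fresh construction per $x_0$), whereas yours stays inside the free-boundary framework but requires the extra mollification step and a separate barrier for each $\epsilon$; both are sound, the paper's being marginally more economical.
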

\begin{proof}
Letting $n\to\infty$ in \eqref{boundfhn} we immediately obtain
\begin{equation}
\label{g-h-Holder}
h_0\leq h(t)\leq h_0+H t^{1/2},\; g_0-H t^{1/2}\leq g(t)\leq g_0 \mbox{ for } t\in (0, T_0].
\end{equation}
This clearly implies 
 \eqref{coninuousgh}. 

Next, we prove \eqref{continuousu}. Let $\bar{u}_0\in C([g(T),h(T)])$ be a nonnegative function such that $\bar{u}_0(x)= u_0(x)$ for $x\in [g_0,h_0]$ and 
$\bar{u}_0(x)=0$ for $x\in [g(T),g_0]\cup[h_0,h(T)]$. It follows from the parabolic comparison principle that 
\begin{equation*}
0\leq  u_n(t,x)  \leq \bar{u}(t,x)\,\hbox{ for  all } \, g_n(t)<x < h_n(t),\,\,\,0<t\leq T,\,n\in\N,
 \end{equation*}
where $\bar{u}(t,x)$ is the unique solution of the following initial-boundary value problem 
\begin{equation*}
\left\{\baa{ll}
\bar{u}_t=d\bar{u}_{xx}+K\bar{u},& g(T)<x<h(T),\quad 0<t<T,\vspace{3pt}\\
\bar{u}(t,g(T))=\bar{u}(t,h(T))=0,&0<t<T,\vspace{3pt}\\
\bar{u}(0,x)=\bar{u}_0(x),& g(T)\leq x\leq h(T),\eaa\right.
\end{equation*} 
with $K$ being the constant given in \eqref{globalb}.
This together with the convergence property \eqref{mconveru} implies that 
$$0< u(t,x)  \leq \bar{u}(t,x)\,\hbox{ for  all } \, g(t)<x < h(t),\,\,\,0<t\leq T.$$
Furthermore, since $\bar{u}_0\in C([g(T),h(T)])$, by the parabolic regularity theory on the boundary (see, e.g., \cite[Theorem 9 in Chapter 3]{fried2}), one has $\bar{u}\in C([0,T]\times[g(T),h(T)])$. 

For any $x_0\in (g_0,h_0)$ and any sequence $(t_m,x_m)_{m\in\N}\subset \R^2$ with $\lim_{m\to\infty}t_m=0$ and $\lim_{m\to\infty}x_m=x_0$,  there exists $n_2\in\N$ such that $g_n(t_m)<g_{0n}< x_m< h_{0n}< h_n(t_m)$ for all $n\geq n_2$, 
$m\geq n_2$, whence $ u_n(t_m,x_m)\leq u(t_m,x_m)\leq \bar{u}(t_m,x_m)$. This together with the facts that $\lim_{m\to\infty} u_n(t_m,x_m)=u_{0n}(x_0)$ for all $n\geq n_2$ and that $\lim_{m\to\infty}\bar{u}(t_m,x_m)=\bar{u}_0(x_0)$ implies that 
$$u_{0n}(x_0) \leq \liminf_{m\to\infty}u(t_m,x_m)\leq  \limsup_{m\to\infty}u(t_m,x_m)\leq \bar{u}_0(x_0).$$
Since $u_{0n}(x_0)$ converges to $u_0(x_0)$ uniformly in $x_0\in [g_0,h_0]$ as $n\to\infty$ and $\bar{u}_0(x_0)=u_0(x_0)$, it follows that $\lim_{m\to\infty}u(t_m,x_m)=u_0(x_0)$. Due to the arbitrariness of the sequence $(t_m,x_m)_{m\in\N}$, one obtains the property \eqref{continuousu} for all $x_0\in (g_0,h_0)$. 

In the case where $x_0=g_0$ or $x_0=h_0$, we have $\lim_{(t,x)\in G_T, t\to0,x\to x_0} \bar u(t,x)=\bar u_0(x_0)=0$.
Thus it follows from $0\leq u(t,x)\leq \bar u(t,x)$ in $G_T$ that
\[
\lim_{(t,x)\in G_T, t\to0,x\to x_0}  u(t,x)=0=u_0(x_0).
\]
Hence \eqref{continuousu} holds for all $x_0\in [g_0,h_0]$.
The proof of Lemma~\ref{ini-cond} is thereby complete.
\end{proof}

\begin{lem}\label{lem-existence}
For any $u_0\in\mathcal{H}(g_0,h_0)$, \eqref{eqf} has a classical solution defined for all $t>0$, and it satisfies \eqref{loces} and \eqref{g-h-es}.
\end{lem}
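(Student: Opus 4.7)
The plan is to assemble the preceding lemmas and then invoke the existing $C^2$-initial-data theory of \cite{dlin} to produce a global-in-time solution. Lemmas~\ref{locex} and~\ref{ini-cond} already deliver a classical solution $(u,g,h)$ of \eqref{eqf} on a short interval $[0,T]$, so the two remaining tasks are (a) to continue $(u,g,h)$ to all $t>0$, and (b) to verify the bounds \eqref{loces} and \eqref{g-h-es}.

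For (a), I would fix any $t_0\in(0,T)$ and pass $n\to\infty$ in the Schauder bound \eqref{applocess} (applied with $2\tau\leq t_0$) to conclude that $u(t_0,\cdot)\in C^{2+\alpha}([g(t_0),h(t_0)])$, vanishing at the endpoints and strictly positive in the interior. Hence $u(t_0,\cdot)$ is an admissible $C^2$ initial datum, and \cite[Theorem~5.1]{dlin} provides a classical solution of \eqref{eqf} on $[t_0,\infty)$ starting from the triple $(u(t_0,\cdot),g(t_0),h(t_0))$. Gluing this continuation with $(u,g,h)|_{[0,t_0]}$ yields a classical solution defined on $[0,\infty)$; the Stefan conditions ensure that the glued $g$ and $h$ are $C^1$ across $t_0$.

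For (b), the bound \eqref{g-h-es} is obtained by letting $n\to\infty$ in \eqref{boundfhn}, using that $h_{0n}\to h_0$, $g_{0n}\to g_0$, and that the constant $H$ produced in Lemma~\ref{est-g_n-h_n} is independent of $n$ and of $\tau$, depending only on $T$, $f$, and $\|u_0\|_{C([g_0,h_0])}$. The bound \eqref{loces} on an arbitrary $G_T^\tau$ follows by splitting $[\tau,T]$: on the portion lying inside the initial local window where the approximation argument of Lemma~\ref{locex} applies, pass $n\to\infty$ in \eqref{applocess}; on the later portion, use the interior Schauder regularity that the continuation inherits from the $C^2$-data theory of \cite{dlin}, applied on a sequence of compact subintervals and combined with the uniform $L^\infty$ bound on $u$ coming from \eqref{globalb} and the uniform bound on $g',h'$ obtained as in Lemma~\ref{estderi}.

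The main obstacle in this assembly---already handled within Lemma~\ref{est-g_n-h_n}---is that the one-sided $t^{1/2}$ Hölder control in \eqref{g-h-es} must be uniform up to $t=0$ and independent of $n$, even though the Schauder estimate \eqref{applocess} degenerates there. Lemma~\ref{est-g_n-h_n} resolves this by dominating $h_n$ and $g_n$ by the one-phase Stefan barrier \eqref{degenrate}, whose Hölder modulus is translation-invariant and therefore the same for every $n$. Once that uniformity is available, the rest of the proof is routine bookkeeping.
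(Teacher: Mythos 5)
Your proposal is correct and follows essentially the same route as the paper: obtain the short-time solution from the approximation in Lemmas~\ref{estderi}--\ref{ini-cond}, observe via \eqref{applocess} that $u(t_0,\cdot)$ is admissible $C^2$ data, extend globally using \cite{dlin}, and read off \eqref{g-h-es} by letting $n\to\infty$ in \eqref{boundfhn} (where the $n$- and $\tau$-independence of $H$ is precisely what Lemma~\ref{est-g_n-h_n} supplies). The only cosmetic difference is that the paper fixes the handoff time to be $T/2$ rather than an arbitrary $t_0\in(0,T)$, and it states the final assembly more tersely.
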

\begin{proof} We already obtained in the previous lemmas  a classical solution $(u,g,h)$ of \eqref{eqf} which is defined for $t\in (0, T]$ with $T>0$ sufficiently small.
Moreover, by \eqref{applocess}, it is easy to see that this solution satisfies \eqref{loces} for such $T$ and $\tau\in (0, T)$.
Thus $u(T/2, x)$ is a $C^2$ function meeting the requirement for the initial function in \cite{dlin}. It follows that this solution can be extended uniquely to all $t>T/2$ by the existence theory in \cite{dlin}, and it satisfies \eqref{loces} for $T>\tau>0$ with an arbitrary $T>0$.
Finally \eqref{g-h-es} follows from \eqref{g-h-Holder} and \eqref{applocess}.
\end{proof}

\begin{rem} \label{approx-above} Analogously, for any given $u_0\in \mathcal{H}(g_0,h_0)$,  we choose a decreasing sequence of intervals $[\tilde g_{0n}, \tilde h_{0n}]$ such that $\tilde g_{0n}\nearrow g_0$, $\tilde h_{0n}\searrow h_0$ as $n\to\infty$, and a sequence of functions $\tilde u_{0n}\in C^2([\tilde g_{0n}, \tilde h_{0n}])$ such that 
\[
\tilde u_{0n}(\tilde g_{0n})=\tilde u_{0n}(\tilde h_{0n})=0,\; \tilde u_{0n}>0 \mbox{ in }  (\tilde g_{0n}, \tilde h_{0n}),\; n\in\N,
\]
and that after extending $\tilde u_{0n}(x)$ and $u_0(x)$ to $\R$ by the value zero outside their supporting sets,
\[
 \tilde u_{0(n-1)}\geq \tilde u_{0n} \mbox{ in } \R, \; \lim_{n\to\infty}\|\tilde u_{0n}-u_0\|_{L^\infty(\R)}=0.
\]
Denoting by $(\tilde u_n, \tilde g_n, \tilde h_n)$ the unique solution of \eqref{eqf} with 
$(u_0, g_0, h_0)=(\tilde u_{0n}, \tilde g_{0n}, \tilde h_{0n})$, then we can similarly show that 
$(\tilde u_n, \tilde g_n, \tilde h_n)$ satisfies \eqref{boundfhn}, \eqref{applocess}, and   converges to a classical solution $(\tilde u, \tilde g, \tilde h)$ 
of \eqref{eqf} with initial data $(u_0, g_0, h_0)$ for $t\in (0, T]$ with $T>0$ small, which can be 
 extended to a classical solution of \eqref{eqf} for all $t>0$, and it satisfies \eqref{loces} and \eqref{g-h-es}.
\end{rem}

Now we proceed to prove the uniqueness of classical solutions to \eqref{eqf}. We will adapt the week solution approach in \cite{dg2} for higher space dimensions to the one space dimension setting here.

\begin{lem}\label{clawek}
Assume that $(u,g,h)$ is a classical solution for~\eqref{eqf}~defined over $G_T$ for some $T>0$ with initial function $u_0\in \mathcal{H}(g_0,h_0)$. For any given open interval $I$ such that $[g(T), h(T)]\subset I$, denote $I_T=(0,T]\times I$, and  
\begin{equation}\label{weakdu}
\tilde{u}(t,x)=\!\left\{\begin{array}{lll}
\!\! u(t,x) & \hbox{for }x\in [g(t),h(t)],\,\,\,0\leq t\leq T,\vspace{3pt}\\
\!\!0 & \hbox{for }x\in I\setminus [g(t),h(t)],\,\,\,0\leq t\leq T.\end{array}\right.
\end{equation}
Then $\tilde{u}\in C(\overline{I_T})$ and 
\begin{equation}\label{dweakf}
\int\limits_{0}^T\!\!\int\limits_{I} \big[d\tilde{u}\phi_{xx}+\kappa(\tilde{u})\phi_t\big]dxdt+\int\limits_{I}\kappa(\tilde{u}_0)\phi(0,x)dx+\int\limits_{0}^T\!\!\int\limits_{I} f(t,x,\tilde{u})\phi dxdt=0
\end{equation}
for every function $\phi \in C(\overline{I_T})\cap W^{1,2}(I_T)$ such that $\phi=0$ on $(\{T\}\times I)\cup([0,T]\times \partial I)$, where $\kappa(\cdot)$ is a function defined by $\kappa(w)=w$ if $w>0$ and $\kappa(w)=w-\mu^{-1}d$ if $w\leq 0$.
\end{lem}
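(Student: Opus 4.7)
The continuity of $\tilde u$ on $\overline{I_T}$ is immediate once I observe that $u\in C(\overline{G_T})$ with $u(t,g(t))=u(t,h(t))=0$, so the two extensions by zero across each free boundary match continuously; combined with the continuity of $g,h$, this shows $\tilde u$ is continuous on $\overline{I_T}$. The heart of the proof is the integral identity \eqref{dweakf}, which I plan to derive by directly integrating the classical equation against $\phi$ and carefully accounting for boundary terms arising from the moving boundaries.

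The plan is to split every integral in \eqref{dweakf} into the part over the ``wet'' region $\Omega^+:=\{(t,x):0<t<T,g(t)<x<h(t)\}$ and the part over the ``dry'' region $\Omega^-:=I_T\setminus\overline{\Omega^+}$, where $\tilde u\equiv0$ and $\kappa(\tilde u)\equiv-\mu^{-1}d$. On $\Omega^+$, integrating $d\tilde u\phi_{xx}$ twice by parts in $x$ and using $u(t,g(t))=u(t,h(t))=0$ together with the Stefan conditions $u_x(t,h(t))=-\mu^{-1}h'(t)$, $u_x(t,g(t))=-\mu^{-1}g'(t)$, I obtain
\begin{equation*}
\int_0^T\!\!\int_{g(t)}^{h(t)}\!\!du\,\phi_{xx}\,dx\,dt=\int_0^T\!\!\int_{g(t)}^{h(t)}\!\!du_{xx}\phi\,dx\,dt+\frac{d}{\mu}\!\int_0^T\!\bigl[h'(t)\phi(t,h(t))-g'(t)\phi(t,g(t))\bigr]dt.
\end{equation*}
For the $\kappa(\tilde u)\phi_t$ term on $\Omega^+$, the Reynolds transport formula applied to $t\mapsto\int_{g(t)}^{h(t)}u\phi\,dx$, together with the vanishing of $u$ on $\{g(t),h(t)\}$ and of $\phi$ at $t=T$, gives $\int_{\Omega^+}u\phi_t=-\int_{\Omega^+}u_t\phi-\int_{g_0}^{h_0}u_0(x)\phi(0,x)\,dx$.

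For the contributions on $\Omega^-$, I will use Fubini by parametrizing the monotone curves $x=g(t)$ and $x=h(t)$ and integrating $\phi_t$ in $t$ first. Writing $\Omega^-=\{a<x<g(t)\}\cup\{h(t)<x<b\}$ with $I=(a,b)$, and exploiting the monotonicity of $g,h$ together with $\phi(T,\cdot)\equiv0$ and $\phi\equiv0$ on $\partial I$, a change of variables $x=g(t)$ (respectively $x=h(t)$) yields
\begin{equation*}
-\frac{d}{\mu}\!\int_{\Omega^-}\!\phi_t\,dx\,dt=\frac{d}{\mu}\!\int_{I\setminus[g_0,h_0]}\!\phi(0,x)\,dx+\frac{d}{\mu}\!\int_0^T\!\bigl[g'(t)\phi(t,g(t))-h'(t)\phi(t,h(t))\bigr]dt.
\end{equation*}
The crucial algebraic point is that the boundary integrals produced here are exactly the negatives of those produced by the integration by parts on $\Omega^+$, and the $\mu^{-1}d$ offset in $\kappa$ is designed precisely so that the constant ``$-\mu^{-1}d$'' piece of $\int\kappa(\tilde u_0)\phi(0,\cdot)\,dx$ cancels the extra initial-time contribution $\mu^{-1}d\int_{I\setminus[g_0,h_0]}\phi(0,x)\,dx$.

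Putting everything together, the two sets of boundary terms along the free boundaries cancel, the initial contributions from the dry region cancel between $\kappa(\tilde u)\phi_t$ and $\kappa(\tilde u_0)\phi(0,\cdot)$, the assumption $f(t,x,0)=0$ kills the $f$ integral over $\Omega^-$, and what remains on $\Omega^+$ is $\int_{\Omega^+}(u_{t}-du_{xx}-f(t,x,u))\phi\,dx\,dt$ together with $-\int_{g_0}^{h_0}u_0\phi(0,\cdot)\,dx+\int_I\kappa(\tilde u_0)\phi(0,\cdot)\,dx$; the first vanishes by the PDE in \eqref{eqf}, and the second telescopes to $0$. The main obstacle I anticipate is the Fubini computation on $\Omega^-$: one must carefully treat $g$ and $h$ as inverse functions on the appropriate subintervals of $I$ and track the signs of the Jacobians, since a sign error there destroys the cancellation with the Stefan boundary terms. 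To guarantee that $\phi$ and its distributional derivatives behave nicely on the interfaces $x=g(t)$, $x=h(t)$, I will work with the $W^{1,2}(I_T)$ regularity of $\phi$ and approximate if necessary by smooth functions so that all integration-by-parts steps are justified.
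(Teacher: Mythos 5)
Your decomposition into the ``wet'' region $\Omega^+$ and the ``dry'' region $\Omega^-$, the integration by parts producing the boundary integrals against $h'\phi$ and $g'\phi$, the observation that these cancel through the Stefan conditions, and the role of the $-\mu^{-1}d$ offset in $\kappa$ are all exactly the same ingredients as in the paper's argument, so the algebraic scheme is right.

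However, there is a genuine gap in carrying out the program directly down to $t=0$. The classical solution here has only $u\in C^{1,2}(G_T)\cap C(\overline{G_T})$ and $g,h\in C^1((0,T])\cap C([0,T])$: the PDE and the Stefan conditions hold only for $t>0$, $g'$ and $h'$ need not exist at $t=0$ and, by the estimate \eqref{g-h-es}, may behave like $t^{-1/2}$ as $t\to 0$; likewise $u_t$ and $u_{xx}$ can blow up as $t\to 0$ since $u_0$ is merely continuous. Your proposed identities such as $\int_{\Omega^+}u\phi_t=-\int_{\Omega^+}u_t\phi-\int_{g_0}^{h_0}u_0\phi(0,\cdot)\,dx$ and the subsequent substitution of $u_t$ by $du_{xx}+f$ therefore integrate quantities over $(0,T]$ that are not known to be integrable near $t=0$, and the Reynolds transport step implicitly uses $g',h'$ at $t=0$. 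Approximating $\phi$ by smooth functions does not help here, since the difficulty is with the regularity of $(u,g,h)$, not of $\phi$.

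The fix, which is precisely what the paper does, is to perform all of the above on $G_T^{\tau}$ for an arbitrary $\tau\in(0,T)$, where $u$, $g$, $h$ are fully regular and every integration by parts is legitimate. This yields \eqref{dweakf} with $0$ replaced by $\tau$ and $\tilde u_0$ by $\tilde u(\tau,\cdot)$. Only at that point does one let $\tau\to 0$: the resulting integrands involve only $\tilde u$, $\kappa(\tilde u)$, $f(\cdot,\cdot,\tilde u)$, $\phi_t$ and $\phi_{xx}$, which are bounded or in $L^2(I_T)$, so dominated convergence and the continuity of $\tilde u$ and $\phi$ up to $t=0$ give \eqref{dweakf}. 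You should insert this truncation-and-limit step; the rest of your computation then goes through.
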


\begin{proof}
By the definition of $\tilde{u}$, clearly $\tilde{u}\in C(\overline{I_T})$.
We now prove  that $\tilde{u}$ satisfies \eqref{dweakf}~for every  $\phi \in C(\overline{I_T})\cap W^{1,2}(I_T)$ such that $\phi=0$ on $(\{T\}\times I)\cup([0,T]\times \partial I)$. To do so, we multiply both sides of the first equation in~\eqref{eqf}~by $\phi$ and integrate over $G_T^{\tau}$ for any given $0<\tau<T$. Since $u(t,g(t))=u(t,h(t))=0$ for all $0<t<T$,  integration by parts yields
\begin{equation*}
 -\int\limits_{\tau}^T\int\limits_{g(t)}^{h(t)}\big[u\phi_t + du\phi_{xx} \big]dxdt\, - \int\limits_{g(\tau)}^{h(\tau)}u(\tau,x)\phi(\tau,x)dx\vspace{3pt}
=d\int\limits_{\tau}^TJ(t)dt+\int\limits_{\tau}^T\int\limits_{g(t)}^{h(t)}f(t,x,u)\phi dxdt.
\end{equation*}
where
$$J(t)= u_x(t,h(t))\phi(t,h(t))- u_x(t,g(t))\phi(t,g(t)).$$
By elementary calculus,
\begin{equation*}
\begin{split}
\int\limits_{\tau}^T\int\limits_{I\setminus [g(t),h(t)]} \phi_t dxdt 
\,=&\int\limits_{\tau}^T\big[\phi(t,h(t))h'(t)-\phi(t,g(t))g'(t)\big] dt-\int\limits_{I\setminus [g(\tau),h(\tau)]} \phi(\tau,x)dx\vspace{3pt}\\
=&-\mu\!\int\limits_{\tau}^TJ(t) dt - \int\limits_{I\setminus [g(\tau),h(\tau)]} \phi(\tau,x)dx.\vspace{3pt}\\
\end{split}
\end{equation*}
Combining the above, since $f(t,x,0)\equiv 0$, we obtain
$$\int\limits_{\tau}^T\!\!\int\limits_{I} \big[d\tilde{u}\phi_{xx}+\kappa(\tilde{u})\phi_t\big]dxdt+\int\limits_{I}\kappa(\tilde{u}(\tau,x))\phi(\tau,x)dx+\int\limits_{\tau}^T\!\!\int\limits_{I} f(t,x,\tilde{u})\phi dxdt=0.
$$
Since $\phi \in C(\overline{I_T})\cap W^{1,2}(I_T)$ and $\tilde{u}\in C(\overline{I_T})$ (and hence, $\kappa(\tilde{u})$ is bounded in $I_T$), passing to the limit as $\tau\to 0$ in the above equality gives \eqref{dweakf}. The proof for Lemma~\ref{clawek}~is thereby complete. 
\end{proof}

\begin{lem}\label{clauni}
For any $u_0\in \mathcal{H}(g_0,h_0)$ and $T>0$, there exists at most one classical solution to problem~\eqref{eqf}~defined over $G_T$ with initial data $(u_0, g_0, h_0)$.
\end{lem}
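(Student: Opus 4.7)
The plan is to use the weak formulation in Lemma~\ref{clawek} combined with a duality/test-function argument, which is the one-dimensional analogue of the Oleinik-type method used in \cite{dg2}. Suppose, for contradiction, that $(u_1,g_1,h_1)$ and $(u_2,g_2,h_2)$ are two classical solutions on $G_T$ sharing the initial data $(u_0,g_0,h_0)$. Fix an open interval $I$ with $[g_i(T),h_i(T)]\subset I$ for $i=1,2$, extend each $u_i$ by zero to obtain $\tilde u_i\in C(\overline{I_T})$, and apply Lemma~\ref{clawek} to each. Subtracting the two identities \eqref{dweakf} gives
\begin{equation*}
\int_0^T\!\!\int_I\Big[d(\tilde u_1-\tilde u_2)\phi_{xx}+\big(\kappa(\tilde u_1)-\kappa(\tilde u_2)\big)\phi_t+\big(f(t,x,\tilde u_1)-f(t,x,\tilde u_2)\big)\phi\Big]\,dx\,dt=0,
\end{equation*}
for every admissible $\phi$, the initial-data terms cancelling.

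Next I would linearize. Define the measurable bounded coefficients
\begin{equation*}
a(t,x)=\begin{cases}\dfrac{\kappa(\tilde u_1)-\kappa(\tilde u_2)}{\tilde u_1-\tilde u_2},&\tilde u_1\neq \tilde u_2,\\ 1,&\tilde u_1=\tilde u_2,\end{cases}\qquad b(t,x)=\begin{cases}\dfrac{f(t,x,\tilde u_1)-f(t,x,\tilde u_2)}{\tilde u_1-\tilde u_2},&\tilde u_1\neq \tilde u_2,\\ 0,&\tilde u_1=\tilde u_2.\end{cases}
\end{equation*}
Because $\tilde u_i\geq 0$ and $\kappa$ jumps downward by $\mu^{-1}d$ at zero, we have $a\geq 1$ and $a$ is bounded above in terms of $\|\tilde u_1+\tilde u_2\|_\infty$; while $|b|\leq L$ with $L$ the Lipschitz constant of $f$ in $u$ on $[0,C_1]$. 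The subtracted identity becomes
\begin{equation*}
\int_0^T\!\!\int_I(\tilde u_1-\tilde u_2)\big[d\phi_{xx}+a\phi_t+b\phi\big]\,dx\,dt=0
\end{equation*}
for every admissible test function $\phi$.

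The final step is duality: given an arbitrary $\psi\in C^\infty_c(I_T)$, I want to construct $\phi$ solving the backward parabolic problem $d\phi_{xx}+a\phi_t+b\phi=\psi$ in $I_T$, with $\phi(T,\cdot)=0$ and $\phi=0$ on $[0,T]\times\partial I$. Since $a,b$ are only bounded measurable, I regularize: pick $a_\epsilon,b_\epsilon\in C^\infty(\overline{I_T})$ with $a_\epsilon\geq 1$ uniformly bounded in $L^\infty$, converging to $a,b$ a.e., and solve the regularized backward problem for a smooth $\phi_\epsilon\in C(\overline{I_T})\cap W^{1,2}(I_T)$. Standard parabolic $L^p$ estimates, together with the bound $a_\epsilon\geq 1$, provide uniform estimates on $\|\phi_\epsilon\|_\infty$ and on $\|\phi_{\epsilon,t}\|_{L^2},\|\phi_{\epsilon,xx}\|_{L^2}$. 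Testing with $\phi_\epsilon$ yields
\begin{equation*}
\int_0^T\!\!\int_I(\tilde u_1-\tilde u_2)\psi\,dx\,dt=\int_0^T\!\!\int_I(\tilde u_1-\tilde u_2)\big[(a_\epsilon-a)\phi_{\epsilon,t}+(b_\epsilon-b)\phi_\epsilon\big]\,dx\,dt,
\end{equation*}
and passing $\epsilon\to0$ by dominated convergence (using the uniform $L^2$ bounds on the $\phi_\epsilon$-derivatives) kills the right-hand side. Arbitrariness of $\psi$ forces $\tilde u_1=\tilde u_2$ a.e., hence everywhere on $I_T$ by continuity. The coincidence of the supports then yields $g_1\equiv g_2$ and $h_1\equiv h_2$, and consequently $u_1\equiv u_2$.

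The main obstacle I anticipate is obtaining the uniform $L^2$ bounds on $\phi_{\epsilon,t}$ and $\phi_{\epsilon,xx}$ needed to justify the passage to the limit, because $a$ is discontinuous across the lateral sets $\{x=g_i(t)\}$ and $\{x=h_i(t)\}$ where $\tilde u_i$ crosses zero; this is the standard hurdle in Oleinik-type uniqueness arguments for Stefan problems and is handled by multiplying the regularized equation by $\phi_{\epsilon,t}/a_\epsilon$ (or an equivalent energy identity) and exploiting the lower bound $a_\epsilon\geq 1$.
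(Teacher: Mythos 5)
Your overall strategy---extend by zero, invoke the weak formulation of Lemma~\ref{clawek}, subtract, linearize, and close by a duality argument with a regularized backward parabolic problem---is the same as the paper's (and both follow the Oleinik-type method from \cite{dg2}). However, your specific linearization is the reciprocal of the paper's, and this leads to a genuine gap.

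You set $a=\big(\kappa(\tilde u_1)-\kappa(\tilde u_2)\big)/(\tilde u_1-\tilde u_2)$ and assert that $a\geq 1$ is \emph{bounded above} in terms of $\|\tilde u_1+\tilde u_2\|_\infty$. The lower bound $a\geq 1$ is correct, but the upper bound is false: since $\tilde u_1,\tilde u_2\geq 0$, if at some point $\tilde u_1>0=\tilde u_2$ then $\kappa(\tilde u_1)-\kappa(\tilde u_2)=\tilde u_1+\mu^{-1}d$, so $a=1+\mu^{-1}d/\tilde u_1\to\infty$ as $\tilde u_1\to 0^+$. Near the free boundaries (where one $\tilde u_i$ is small positive and the other vanishes) $a$ is unbounded, and in fact $a\notin L^2(I_T)$ in general. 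This breaks your plan in two places: (i) you cannot regularize $a$ by $a_\epsilon\to a$ in $L^2$, so the error integral $\int(\tilde u_1-\tilde u_2)(a_\epsilon-a)\phi_{\epsilon,t}$ cannot be controlled as you propose; and (ii) the backward problem $a_\epsilon\phi_{\epsilon,t}+d\phi_{\epsilon,xx}+b_\epsilon\phi_\epsilon=\psi$, rewritten as $\phi_{\epsilon,t}+(d/a_\epsilon)\phi_{\epsilon,xx}+\dots=\dots$, has a degenerating diffusion coefficient $d/a_\epsilon$, so the standard energy identity only produces a weighted bound of the form $\|a_\epsilon^{1/2}\phi_{\epsilon,t}\|_{L^2}\leq C$, \emph{not} the unweighted $L^2$ bounds on $\phi_{\epsilon,t}$ and $\phi_{\epsilon,xx}$ that you claim. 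In particular, recovering $\phi_{\epsilon,xx}$ from the equation requires multiplying $\phi_{\epsilon,t}$ by $a_\epsilon$, which destroys the $L^2$ bound.

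The paper sidesteps this by working with $e=(\tilde u_2-\tilde u_1)/(\kappa(\tilde u_2)-\kappa(\tilde u_1))=1/a\in[0,1]$, i.e.\ writing the dual equation as $\partial_t\phi_m+d e_m\partial_{xx}\phi_m+e_ml_m\phi_m=q$ with $e_m\geq 1/m$, $\|e/e_m\|_{L^2}\leq C_2$. Then the available a priori estimates are $\|\phi_m\|_\infty\leq C_3$ and the \emph{weighted} bound $\|e_m^{1/2}\partial_{xx}\phi_m\|_{L^2}\leq C_3$, and the error term $\int|e_m-e|\,|\partial_{xx}\phi_m|$ is handled by the H\"older splitting
\begin{equation*}
\int_0^T\!\!\int_I |e_m-e|\,|\partial_{xx}\phi_m|\,dx\,dt
\le \Big(\int_0^T\!\!\int_I\frac{|e_m-e|^2}{e_m}\,dx\,dt\Big)^{1/2}
\Big(\int_0^T\!\!\int_I e_m|\partial_{xx}\phi_m|^2\,dx\,dt\Big)^{1/2},
\end{equation*}
which uses exactly the weighted bound and the boundedness of $e$. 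To fix your write-up, you should replace the coefficient $a$ by $e=1/a$ (so the subtracted identity is expressed in terms of $\kappa(\tilde u_1)-\kappa(\tilde u_2)$), drop the claim that $a$ is bounded above, and replace the unweighted $L^2$ bounds by the weighted bound $\|e_m^{1/2}\partial_{xx}\phi_m\|_{L^2}$ together with the H\"older argument above; this is precisely the structure of the proof in \cite{dg2} and the one the paper uses.
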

\begin{proof}
The proof of this lemma is analogous to that for \cite[Theorem~3.5]{dg2}. For the sake of completeness, we include the details 
here. Assume that problem~\eqref{eqf}~admits two classical solutions $(u_1,g_1,h_1)$ and $(u_2,g_2,h_2)$ defined for $0<t\leq T$ with the same initial data $(u_0, g_0, h_0)$. 
Let $I$ be an open interval such that $I \supset [g_1(T), h_1(T)]\cup [g_2(T), h_2(T)]$ and $\tilde{u}_i$ be defined by \eqref{weakdu} with $(u,g,h)$ replaced by $(u_i, g_i, h_i)$ for $i=1,\,2$. Then $\tilde{u}_1,\,\tilde{u}_2$ are continuous over $\overline{I_T}$, and by  \eqref{dweakf} we obtain
\begin{equation}\label{test}
 \int\limits_{0}^T\!\!\int\limits_{I} \big[ \kappa(\tilde{u}_2)-\kappa(\tilde{u}_1) \big](\partial_t\phi+de\partial_{xx}\phi+el\phi) dxdt=0
 \end{equation}
for every function $\phi \in C^2(\overline{I_T})$ such that $\phi=0$ on $({T}\times I)\cup([0,T]\times \partial I)$, where
\begin{equation*}
l(t,x)=\!\left\{\begin{array}{lll}
\!\! \displaystyle\frac{f(t,x,\tilde{u}_2(t,x))-f(t,x,\tilde{u}_1(t,x))}{\tilde{u}_2(t,x)-\tilde{u}_1(t,x)} & \hbox{if }\tilde{u}_1(t,x)\neq \tilde{u}_2(t,x),\vspace{3pt}\\
\!\!0 & \hbox{if }\tilde{u}_1(t,x)= \tilde{u}_2(t,x),\end{array}\right.
\end{equation*}
and
\begin{equation*}
e(t,x)=\!\left\{\begin{array}{lll}
\!\! \displaystyle\frac{\tilde{u}_2(t,x))-\tilde{u}_1(t,x)}{\kappa(\tilde{u}_2(t,x))-\kappa(\tilde{u}_1(t,x))} & \hbox{if }\tilde{u}_1(t,x)\neq \tilde{u}_2(t,x),\vspace{3pt}\\
\!\!0 & \hbox{if }\tilde{u}_1(t,x)= \tilde{u}_2(t,x).\end{array}\right.
\end{equation*}
By the definition of $\kappa$, one sees that there is some $0<C_1\leq 1$ such that $0\leq e(t,x)\leq C_1$ {\rm a.e.} $(t,x)\in I_T$. 
Since the function $f(t,x,s)$ is of class $C^1$ in $s\geq 0$ uniformly in $(t,x)\in\R\times\R$ and since $\tilde{u}_i(t,x)$ is bounded in $(t,x)\in I_T$ for $i=1,\,2$, the function $l(t,x)$ is bounded in $(t,x)\in I_T$. We then approximate $e$ and $l$ by smooth functions $e_m\in C^{\infty}(\overline{I_T})$ and $l_m\in C^{\infty}(\overline{I_T})$ such that  
\begin{equation}\label{appconverel}
\|e_m-e \|_{L^2(I_T)} \to 0,\,\,\,\, \|l_m-l \|_{L^2(I_T)} \to 0  \,\,\hbox{ as }\,\,m\to\infty,
\end{equation}
and 
\begin{equation}\label{appboundel}
\inf_{I_T} e_m\geq \frac 1m,\; \;\Big\|\frac{e}{e_m}\Big\|_{L^{2}(I_T)}\leq C_2,\,\,\,  \|e_m \|_{L^{\infty}(I_T)}\leq C_2,\,\,\, \|l_m \|_{L^{\infty}(I_T)}\leq C_2 
\end{equation}
for some positive constants $C_2$ independent of $m$ (the existence of such an approximation $e_m$ follows from \cite[Lemma 5]{ch2}). We now fixed a function $q\in C_c^{\infty}(I_T)$. It is well known that the following problem 
\begin{equation*}
\left\{\baa{ll}
\partial_t\phi_m+de_m\partial_{xx}\phi_m+e_ml_m\phi_m=q,& (t,x)\in I_T,\vspace{3pt}\\
\phi_m(T,x)=0,&x\in I,\vspace{3pt}\\
\phi_m(t,x)=0,& 0\leq t\leq T,\, x\in\partial I, \eaa\right.
\end{equation*} 
admits a unique smooth solution $\phi_m$. Moreover, it follows from the proof in \cite[Lemmas~3.6-3.7]{dg2}  that there exists some positive constant $C_3$ independent of $m$ such that 
\begin{equation}\label{unifesti} 
\big\| \phi_m \big\|_{L^{\infty}(I_T)}\leq C_3\quad\hbox{and}\quad  \big\| e_m^{1/2}\partial_{xx}\phi_m \big\|_{L^2(I_T)}\leq C_3.
\end{equation}
Taking each $\phi_m$ as a text function in \eqref{test} gives 
$$\int\limits_{0}^T\!\!\int\limits_{I} \big[ \kappa(\tilde{u}_2)-\kappa(\tilde{u}_1) \big](\partial_t\phi_m+de\partial_{xx}\phi_m+el\phi_m) dxdt=0. $$
This implies that 
\begin{equation*}
\begin{split}
&\int\limits_{0}^T\!\!\int\limits_{I} \big[ \kappa(\tilde{u}_2)-\kappa(\tilde{u}_1) \big]q dxdt\\
 &= \int\limits_{0}^T\!\!\int\limits_{I} \big[ \kappa(\tilde{u}_2)-\kappa(\tilde{u}_1) \big]\big(\partial_t\phi_m+de_m\partial_{xx}\phi_m+e_ml_m\phi_m \big)dxdt\\
&=\int\limits_{0}^T\!\!\int\limits_{I} \big[ \kappa(\tilde{u}_2)-\kappa(\tilde{u}_1) \big]\big\{d(e_m-e)\partial_{xx}\phi_m+(e_ml_m-el)\phi_m  \big\}dxdt.
\end{split}
\end{equation*}
Hence, by the boundedness of $\kappa(\tilde{u}_i)$ for $i=1,\,2$ and the first estimate in \eqref{unifesti}, one has 
\begin{equation*}
\int\limits_{0}^T\!\!\int\limits_{I} \big[ \kappa(\tilde{u}_2)-\kappa(\tilde{u}_1) \big]q dxdt \leq  C_4 \int\limits_{0}^T\!\!\int\limits_{I} 
\big|e_m-e\big|\big|\partial_{xx}\phi_m\big| dxdt + C_5\int\limits_{0}^T\!\!\int\limits_{I} \big|e_ml_m-el\big| dxdt\\
\end{equation*}
for some positive constants $C_4$ and $C_5$ independent of $m$. Then, on the one hand, by the convergences in \eqref{appconverel} and boundedness of $e_m$, $l_m$ in \eqref{appboundel} , one has 
 $$\int\limits_{0}^T\!\!\int\limits_{I} \big|e_ml_m-el\big| dxdt \to 0 \,\,\hbox{ as }\,\,m\to\infty.$$
On the other hand, it follows from the H\"{o}lder inequality that 
\begin{equation*} 
\begin{split}
&\int\limits_{0}^T\!\!\int\limits_{I} \big|e_m-e\big|\big|\partial_{xx}\phi_m\big| dxdt \\
&\leq \Big(\int\limits_{0}^T\!\!\int\limits_{I} \frac{|e_m-e|^2}{|e_m|}dxdt\Big)^{\frac{1}{2}}\Big(\int\limits_{0}^T\!\!\int\limits_{I} |e_m||\partial_{xx}\phi_m|^2 dxdt\Big)^{\frac{1}{2}}\\
&\leq \big\| e_m-e\big\|_{L^2(I_T)}^{\frac{1}{2}} \Big(\int\limits_{0}^T\!\!\int\limits_{I} \frac{|e_m-e|^2}{|e_m|^2}dxdt\Big)^{\frac{1}{4}}\Big(\int\limits_{0}^T\!\!\int\limits_{I} |e_m||\partial_{xx}\phi_m|^2 dxdt\Big)^{\frac{1}{2}}.
\end{split}
\end{equation*}  
Therefore, due to the second inequality in \eqref{appboundel}  and  the second inequality in \eqref{unifesti},  it follows that 
$$\int\limits_{0}^T\!\!\int\limits_{I} \big|e_m-e\big|\big|\partial_{xx}\phi_m\big| dxdt \to 0 \,\,\hbox{ as }\,\,m\to\infty.$$
We thus obtain
$\int_{0}^T\int_{I} \big[ \kappa(\tilde{u}_2)-\kappa(\tilde{u}_1) \big]q dxdt \leq  0$.
Due to the arbitrariness of $q\in C_c^{\infty}(I_T)$, this implies that $\kappa(\tilde{u}_1) = \kappa(\tilde{u}_2)$ {\rm a.e.} in $I_T$. By the definition of $\kappa$, one gets that $\tilde{u}_1 = \tilde{u}_2$ {\rm a.e.} in $I_T$. Since $u_i\in C(\overline{I_T})$ for $i=1,2$, it follows that  $\tilde{u}_1(t,x) = \tilde{u}_2(t,x)$ for all $(t,x)\in I_T$, and hence $g_1(t)=g_2(t)$ and $h_1(t)=h_2(t)$ for every $0<t\leq T$. The proof of Lemma~\ref{clauni} is thereby complete.
\end{proof}
  
Theorem~\ref{existence} clearly follows directly from 
 Lemmas~\ref{lem-existence}~and~\ref{clauni}.

\subsection{Continuous dependence and comparison principle}\label{sec22}
In this section, we first show that the classical solutions obtained in Theorem~\ref{existence} depend continuously on the initial data, and then we prove a comparison principle. 
These results will play important roles in Part 2. 

To prove the continuous dependence, we  introduce a few notations. 
For any $(u_0,g_0,h_0)\in\mathcal{H}(g_0,h_0)\times\R\times\R$, and any sequence $(u_{0n},g_{0n},h_{0n})_{n\in\N}\subset \mathcal{H}(g_{0n},h_{0n})\times\R\times\R$, we say $(u_{0n},g_{0n},h_{0n})$ converges  to $(u_{0},g_0, h_0)$  
 as $n\to\infty$, if 
\[
\mbox{ $g_{0n}\to g_0$, $h_{0n}\to h_0$ and $u_{0n}(x)\to u_{0}(x)$ uniformly in $x\in \R$,}
\]
 where   $u_{0n}$ and $u_0$ are always extended  to $\R$ by taking the value 0 
outside their supporting sets.
 For any fixed $t>0$, the convergence of $\big(u_n(t,x),g_n(t),h_n(t)\big)$ to $\big(u(t,x),g(t),h(t)\big)$  is  defined in a similar way, where $(u_n,g_n,h_n)$ is the solution of \eqref{eqf} with initial data  $(u_{0n}, g_{0n}, h_{0n})$, and $(u,g,h)$ is the solution of \eqref{eqf} with initial data $(u_{0}, g_{0},h_{0})$.

\begin{pro}\label{cdepend}
\begin{itemize}
\item[(i)]
Suppose that $(u_{0n},g_{0n},h_{0n})$ converges to $(u_{0},g_0, h_0)$  as $n\to\infty$. Then for any given $T>0$, $\big(u_n(t,x),g_n(t),h_n(t)\big)$ converges to $\big(u(t,x),g(t),h(t)\big)$  as $n\to\infty$ uniformly in $t\in[0,T]$.

\item[(ii)] Suppose that $ \lim_{n\to\infty}g_{0n}=-\infty$ and $\lim_{n\to\infty}h_{0n}=\infty$ and that $u_{0n}(x)$ converges to $u_0(x)$ locally uniformly in $x\in\R$. Then for any given $T>0$,  $u_n(t,x)$ converges to $v(t,x;u_0)$ locally uniformly in $x\in\R$ and uniformly in $t\in [0,T]$, where $v(t,x;u_0)$ is the unique solution of the Cauchy problem \eqref{cauchy} with initial datum $v(0,\cdot)=u_0(\cdot)$ in $\R$. 
\end{itemize}
\end{pro}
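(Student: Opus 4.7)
The plan is to prove both parts by combining the uniform a priori estimates \eqref{loces}--\eqref{g-h-es} from Theorem~\ref{existence} with the uniqueness result of Lemma~\ref{clauni} via a compactness-plus-uniqueness argument. The key observation is that the constants $C,H$ appearing in those estimates depend only on $\tau,T,h_0-g_0,f$ and $\|u_0\|_{C([g_0,h_0])}$, so they remain uniformly bounded along any sequence of initial data converging in the sense specified.

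For part (i), the hypothesis that $(u_{0n}, g_{0n}, h_{0n}) \to (u_0, g_0, h_0)$ ensures that $h_{0n}-g_{0n}$ and $\|u_{0n}\|_{L^\infty}$ are uniformly bounded, so the triple $(u_n, g_n, h_n)$ satisfies \eqref{loces} and \eqref{g-h-es} with constants independent of $n$. By Arzel\`a--Ascoli, from any subsequence one can extract a further subsequence for which $g_n \to \tilde g$ and $h_n \to \tilde h$ in $C^1_{loc}((0,T])$, and $u_n \to \tilde u$ in $C^{1,2}_{loc}$ on the limiting domain, so $(\tilde u, \tilde g, \tilde h)$ solves the first four equations of \eqref{eqf} on $(0,T]$. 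To verify the initial conditions I would imitate Lemma~\ref{ini-cond}: the bound \eqref{g-h-es} passes to the limit and yields $h_0 \le \tilde h(t) \le h_0+Ht^{1/2}$ together with the corresponding estimate for $\tilde g$, so $\tilde h(0)=h_0$ and $\tilde g(0)=g_0$; for the initial value of $\tilde u$, sandwich $u_n$ between $0$ and the solution $\bar u$ of $\bar u_t = d\bar u_{xx}+K\bar u$ on a fixed interval containing every $[g_n(t),h_n(t)]$ for $t\in[0,T]$, with initial data a continuous majorant of $u_0$ extended by zero. Standard parabolic boundary regularity makes $\bar u$ continuous up to $t=0$, and the uniform convergence $u_{0n}\to u_0$ then forces $\tilde u(0,x)=u_0(x)$. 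Lemma~\ref{clauni} identifies $(\tilde u,\tilde g,\tilde h)=(u,g,h)$, so the whole sequence converges. The uniform convergence of $u_n$ on $[0,T]\times\R$ with zero extension finally follows from the uniform H\"older bound $|u_n(t,x)| \le C\,\bigl(\min\{x-g_n(t),h_n(t)-x\}\bigr)^{\alpha}$ (built into \eqref{loces}) combined with the uniform convergence of $g_n,h_n$.

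For part (ii), the absence of a limiting free boundary makes the argument simpler. From \eqref{g-h-es}, $g_n(t)\le g_{0n}$ and $h_n(t)\ge h_{0n}$ for all $t\in[0,T]$, so for each fixed $R>0$ and every sufficiently large $n$, $[-R-1,R+1]\subset[g_n(t),h_n(t)]$ uniformly in $t\in[0,T]$. On such a region $u_n$ is an interior solution of $u_t = du_{xx}+f(t,x,u)$, bounded by $\|u_{0n}\|_\infty\me^{KT}$, so interior parabolic Schauder estimates produce uniform $C^{1+\alpha/2,2+\alpha}_{loc}$ bounds on $(0,T]\times\R$. Continuity up to $t=0$ again comes from a sandwich comparison on the fixed interval $[-R-1,R+1]$, using that $u_{0n}\to u_0$ uniformly on $[-R-1,R+1]$. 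Any subsequential limit therefore solves the Cauchy problem \eqref{cauchy} with initial datum $u_0$, and by uniqueness of that problem, the whole sequence converges to $v(\cdot,\cdot;u_0)$ locally uniformly in $x$ and uniformly in $t\in[0,T]$.

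The main obstacle in both parts is the passage to the limit at $t=0$, where the initial data are only continuous and the free boundaries are only H\"older of exponent $1/2$. The comparison-with-linear-heat-equation device used in Lemma~\ref{ini-cond} is the essential tool that allows this to be done cleanly; the remaining ingredients are standard parabolic compactness together with the uniqueness theorem already established.
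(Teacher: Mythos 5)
Your proof takes a genuinely different route from the paper's. The paper constructs two monotone sandwich sequences $(\underline u_{0n},\underline g_{0n},\underline h_{0n})$ from below and $(\overline u_{0n},\overline g_{0n},\overline h_{0n})$ from above, all with $C^2$ initial data, and applies the comparison principle to trap $(u_n,g_n,h_n)$ between the corresponding solutions; these sandwich solutions converge to $(u,g,h)$ by the constructions already carried out in Theorem~\ref{existence} and Remark~\ref{approx-above}, so the trapped sequence must converge as well, and uniform convergence of $u_n$ on $\R$ follows from the uniform squeezing. Your route is compactness-plus-uniqueness: uniform a priori bounds give subsequential limits, which you verify are classical solutions with the prescribed initial data and then identify via Lemma~\ref{clauni}. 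Both strategies are legitimate, but they trade off differently. The paper never has to verify that the constants in \eqref{loces}--\eqref{g-h-es} are uniform over a varying sequence of continuous initial data — the comparison principle supplies this automatically. Your approach, in contrast, must supply that uniformity. The stated dependence of $C$ and $H$ on $\tau,T,h_0-g_0,f,\|u_0\|_\infty$ alone is slightly optimistic: the proof of Lemma~\ref{estderi} also uses a positive lower bound of $u_0$ on an interior subinterval (through the quantities $\delta$ and $\delta_0$) to control the free-boundary speed from time $\tau_0$ onward, and therefore $C$ depends on more than the sup norm of $u_0$. That lower bound is uniformly controllable here because $u_{0n}\to u_0$ uniformly and $u_0>0$ in $(g_0,h_0)$, but you should say so explicitly before asserting that the bounds are uniform in $n$.

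There is also a concrete gap in your treatment of the initial condition in part (i). Your upper sandwich $\bar u$ is built from \emph{a continuous majorant of $u_0$}, which at an interior point $x_0$ only yields $0\le \tilde u(0,x_0)\le \bar u(0,x_0)$; since $\bar u(0,x_0)$ need not equal $u_0(x_0)$, this does not pin down $\tilde u(0,x_0)=u_0(x_0)$. You need both (a) an upper sandwich whose initial value actually equals $u_0$ extended by zero, exactly as in Lemma~\ref{ini-cond}, so that $\limsup u_n(t,x)\le u_0(x_0)$, and (b) a lower sandwich, e.g.\ the solution on a slightly shrunken \emph{fixed} interval around $x_0$ with zero Dirichlet data and initial datum $u_{0n}$, to give $\liminf u_n(t,x)\ge u_0(x_0)$. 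In the paper this lower bound comes for free because $u$ is built as the increasing limit of solutions with $C^2$ data; in a compactness argument it has to be manufactured. You do sketch the correct two-sided comparison for part (ii), so the fix is routine, but the part-(i) passage at $t=0$ is incomplete as written.
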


\begin{proof}
We only present the proof for the first statement, since the proof for the second one is similar and even simpler. 

Since $(u_{0n}, g_{0n}, h_{0n})\to (u_0, g_0, h_0)$ as $n\to\infty$, we can find $(\underline u_{0n}, \underline g_{0n}, \underline h_{0n})$ and $(\overline u_{0n}, \overline g_{0n}, \overline h_{0n})$ such that, for every $n\in\N$,
\[
\underline u_{0n}\in C^2([\underline g_{0n}, \underline h_{0n}]),\; \underline u_{0n}(\underline g_{0n})=\underline u_{0n}(\underline h_{0n})
=0,\; \underline u_{0n}(x)>0 \mbox{ for } x\in (\underline g_{0n}, \underline h_{0n}),
\]
\[
\overline u_{0n}\in C^2([\overline g_{0n}, \overline h_{0n}]),\; \overline u_{0n}(\overline g_{0n})=\overline u_{0n}(\overline h_{0n})
=0,\; \overline u_{0n}(x)>0 \mbox{ for } x\in (\overline g_{0n}, \overline h_{0n}),
\]
\[
\underline u_{0n}\leq u_{0n}\leq \overline u_{0n} \mbox{ in } \R,
\;\;
\underline g_{0n}\geq g_{0n}\geq \overline g_{0n},\; \underline h_{0n}\leq h_{0n}\leq \overline h_{0n},
\]
\[
\underline g_{0n}\searrow g_0,\;  \overline g_{0n}\nearrow g_0,
\;\;
 \underline h_{0n}\nearrow h_0,\;  \overline h_{0n}\searrow h_0 \mbox{ as } n\to\infty,
\]
and
\[
 \underline u_{0n}\nearrow u_0,\;  \overline u_{0n}\searrow u_0 \mbox{ uniformly in } \R \mbox{ as } n\to\infty.
\]
Here, as before, the initial functions are extended  to $\R$ by the value 0 outside their supporting sets.

Let $(\underline u_n, \underline g_n,\underline h_n)$ be the unique classical solution of \eqref{eqf} with initial data $(\underline u_{0n}, \underline g_{0n},  \underline h_{0n})$,  and $(\overline u_{n}, \overline g_{n}, \overline h_{n})$ be the unique solution of \eqref{eqf} with initial data 
$(\overline u_{0n}, \overline g_{0n}, \overline h_{0n})$.  It follows from the proof of \cite[Lemma 3.5]{dlin} that 
 $$\overline g_n(t)\leq g_n(t)\leq \underline g_n(t),\,\,\, \overline h_n(t)\geq h_n(t)\geq \underline h_n(t)  \,\hbox{ in }\, (0,T], $$
 and
$$ \overline u_n(t,x) \geq u_n(t,x)\geq \underline u_n(t,x)\,\hbox{ for } \, 0<t\leq T,\,\,\, x\in\R,$$  
where $\overline u_n(t,\cdot)$, $u_n(t,\cdot)$ and $\underline u_n(t,\cdot)$ are extended to all of $\R$ by taking the value 0 outside their supporting sets.

By Theorem \ref{existence} and Remark \ref{approx-above}, we know that
\[
\lim_{n\to\infty}\underline g_n(t)= \lim_{n\to\infty}\overline g_n(t)=g(t),\; \lim_{n\to\infty}\underline h_n(t)= \lim_{n\to\infty}\overline h_n(t)=h(t)
\]
uniformly in $t\in [0, T]$. It follows that
\[
\lim_{n\to\infty} g_n(t)=g(t),\;  \lim_{n\to\infty} h_n(t)=h(t) \mbox{ uniformly in } t\in [0, T].
\]
Moverover, from
\[
\lim_{n\to\infty} \underline u_n(t,x)=\lim_{n\to\infty} \overline u_n(t,x)=u(t,x) \mbox{ in } C_{loc}(G_T),
\]
and \eqref{loces} and \eqref{g-h-es}, we see that
\[
\lim_{n\to\infty} \underline u_n(t,x)=\lim_{n\to\infty} \overline u_n(t,x)=u(t,x)
\]
uniformly in $[\tau, T]\times \R$ for any $\tau\in (0, T)$.

Furthermore, by the proof of Lemma \ref{ini-cond}, we easily see that
\[
\lim_{n\to\infty, t\to 0}\overline u_n(t,x)=\lim_{n\to\infty, t\to 0}\underline u_n(t,x)=u_0(x) \mbox{ in } L^\infty(\R).
\]

Combining the above conclusions, we see that
\[
\lim_{n\to\infty}  u_n(t,x)=u(t,x)
\]
uniformly in $x\in\R,\; t\in [0, T]$. 
\end{proof}

Having in hand the above continuous dependence, we now establish the following comparison principle for problem~\eqref{eqf}~with initial function belonging to $\mathcal{H}(g_0,h_0)$, which is an easy extension of that for~\eqref{eqf}~with $C^2$ initial functions.  

\begin{pro}\label{comparison}
Suppose that $T\in (0,\infty)$, that $\tilde{g},\,\tilde{h}\in C\big([0,T]\big)\cap C^1\big((0,T]\big)$ and that $\tilde{u}\in C\big(\overline{\tilde{D}_T}\big)\cap C^{1,2}\big(\tilde{D}_T\big)$ with $\tilde{D}_T=\big\{(t,x)\in\R^2: \, 0<t\leq T,\, \tilde{g}(t)\leq x\leq \tilde{h}(t)\big\}$. 
\begin{itemize}
\item[(i)] If
\begin{equation}\label{ll}
\left\{\baa{ll}
\tilde{u}_t \geq d\tilde{u}_{xx}+f(t,x,\tilde{u}),& 0<t\leq T,\,\,\,\tilde{g}(t)<x<\tilde{h}(t),\vspace{3pt}\\
\tilde{u}(t,\tilde{g}(t))=0,\,\,\,\, \tilde{g}'(t)\leq -\mu \tilde{u}_x(t,\tilde{g}(t)),&0<t\leq T, \,\,\, x=\tilde{g}(t),\vspace{3pt}\\
\tilde{u}(t,\tilde{h}(t))=0,\,\,\,\tilde{h}'(t)\geq -\mu \tilde{u}_x(t,\tilde{h}(t)),&0<t\leq T,\,\,\, x=\tilde{h}(t),\eaa\right.
\end{equation} 
and
$$[g_0,h_0]\subset [\tilde{g}(0), \tilde{h}(0)],\quad\quad u_0(x)\leq \tilde{u}(0,x)\,\hbox{ in } \, [g_0,h_0],$$
 then the solution $(u,g,h)$ of problem~\eqref{eqf}~ with initial data $(u_0, g_0, h_0)$ satisfies
 $$g(t)\geq \tilde{g}(t),\,\,\,h(t)\leq \tilde{h}(t) \,\hbox{ in }\, (0,T], $$ 
 and
 $$ u(t,x)\leq \tilde{u}(t,x)\,\hbox{ for } \, 0<t\leq T,\,\,\,g(t)\leq x\leq h(t).$$  
\item[(ii)] If the inequalities in \eqref{ll} are reversed, and 
$$
[g_0,h_0]\supset [\tilde{g}(0), \tilde{h}(0)]\quad\hbox{and}\quad u_0(x)\geq \tilde{u}(0,x)\,\hbox{ in } \, [\tilde g(0),\tilde h(0)],$$
 then the solution $(u,g,h)$ of problem~\eqref{eqf}~with initial data $(u_0, g_0, h_0)$ satisfies
 $$g(t)\leq \tilde{g}(t),\,\,\,h(t)\geq \tilde{h}(t) \,\hbox{ in }\, (0,T], $$ 
 and
 $$ u(t,x)\geq \tilde{u}(t,x)\,\hbox{ for } \, 0<t\leq T,\,\,\, \tilde g(t)\leq x\leq \tilde h(t).$$ 
\end{itemize}
\end{pro}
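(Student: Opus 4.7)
The strategy is a density argument that reduces the statement to the comparison principle for problem \eqref{eqf} with $C^2$ initial data, which is the version of \cite[Lemma 5.7]{dlin} already invoked in Section \ref{sec21}. One approximates the continuous initial function $u_0$ by smooth data, applies the $C^2$ comparison against the super/sub-solution triple $(\tilde u, \tilde g, \tilde h)$, and passes to the limit using Proposition \ref{cdepend}(i). This parallels the approach of Lemma \ref{locex}, so nearly all of the technical work has already been done.

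I treat part (i); part (ii) is entirely symmetric. Let $\{(u_{0n}, g_{0n}, h_{0n})\}_{n\in\N}$ be the inner $C^2$ approximating sequence constructed at the start of Section \ref{sec21}, so that $g_{0n}=g_0+\epsilon_0/n\searrow g_0$, $h_{0n}=h_0-\epsilon_0/n\nearrow h_0$, $u_{0n}\in C^2([g_{0n}, h_{0n}])$ vanishes at the endpoints, is positive in the interior, satisfies $u_{0n}\leq u_0$ on $[g_{0n}, h_{0n}]$, and (after the standard extension by zero outside its support) converges uniformly to $u_0$ on $\R$. Let $(u_n, g_n, h_n)$ denote the associated classical solution of \eqref{eqf}. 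Because
\[
[g_{0n}, h_{0n}]\subset [g_0, h_0]\subset [\tilde g(0),\tilde h(0)], \quad u_{0n}(x)\leq u_0(x)\leq \tilde u(0,x)\,\text{ on }[g_{0n}, h_{0n}],
\]
the hypotheses of the $C^2$ comparison principle are met between $(u_n, g_n, h_n)$ and the super-solution triple $(\tilde u, \tilde g, \tilde h)$. This yields
\[
g_n(t)\geq \tilde g(t), \quad h_n(t)\leq \tilde h(t) \,\text{ for all } t\in (0, T],
\]
and $u_n(t,x)\leq \tilde u(t,x)$ wherever both are defined. By Proposition \ref{cdepend}(i), the boundaries $g_n,\,h_n$ converge uniformly on $[0,T]$ to $g,\,h$, and $u_n$ converges to $u$ uniformly in $(t,x)\in [0,T]\times\R$ after the usual zero extension. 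Passing to the limit $n\to\infty$ in each of the three inequalities above delivers $g\geq \tilde g$, $h\leq \tilde h$ on $(0, T]$, and $u\leq \tilde u$ on $\overline{G_T}$, which is the conclusion of (i).

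The only point to verify with some care, and what I would call the main (minor) obstacle, is that \cite[Lemma 5.7]{dlin} indeed applies in the form just used, namely comparison of a smooth classical solution of \eqref{eqf} against a pair $(\tilde u, \tilde g, \tilde h)$ that is merely assumed to be a super-solution on the time interval $[0, T]$ (as opposed to a full classical solution arising from some initial datum on a larger interval). This is not a genuine difficulty: the proof in \cite{dlin} is a localized parabolic maximum-principle argument on the smaller of the two curvilinear domains, and it exploits only the differential inequalities in \eqref{ll} together with the sign of $\tilde g'+\mu\tilde u_x$ and $\tilde h'+\mu\tilde u_x$ on the appropriate portions of the lateral boundary. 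For part (ii) one replaces the inner approximation by the outer $C^2$ approximation described in Remark \ref{approx-above} and reverses all inequalities; Proposition \ref{cdepend}(i) applies to that sequence as well, and the limit argument goes through identically.
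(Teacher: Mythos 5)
Your proof is correct and follows essentially the same approach as the paper: inner approximation of $u_0$ by $C^2$ data, application of the smooth-data comparison principle against the super-solution triple, and passage to the limit via Proposition~\ref{cdepend}(i). The only cosmetic discrepancy is that the paper invokes the proof of \cite[Lemma~3.5]{dlin} (the super/sub-solution comparison) rather than \cite[Lemma~5.7]{dlin} at this step, which is precisely the point you flagged as needing care; the substance of your argument is unchanged.
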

\begin{proof} We only give the proof for part (i), as part (ii) can be proved analogously.
Choose sequences $(g_{0n})_{n\in\N}\subset \R$, $(h_{0n})_{n\in\N}\subset \R$ such that $g_{0n}$ decreases to $g_0$, $h_{0n}$ increases to $h_0$ as $n\to\infty$ and $(u_{0n})_{n\in\N}\subset C^2\big([g_{0n},h_{0n}]\big)$ such that
$$0<u_{0n}(x)\leq u_0(x) \,\hbox{ in }\, [g_{0n},h_{0n}]\,\,\, \hbox{and}\,\,\,  u_{0n}(g_{0n})= u_{0n}(h_{0n})=0\,\hbox{ for each } \, n\in\N,$$
 and that $u_{0n}$ converges to $u_{0}$ as $n\to\infty$ uniformly in $[g_0,h_0]$. For each $n\in\N$, let $(u_n,g_n,h_n)$ be the calssical solution of problem~\eqref{eqf}~with initial data $(u_{0n}, g_{0n},h_{0n})$. It then follows from the proof of \cite[Lemma 3.5]{dlin} that 
 $$g_n(t)\geq \tilde{g}(t),\,\,\,h_n(t)\leq \tilde{h}(t) \,\hbox{ in }\, (0,T], $$
 and
$$ u_n(t,x)\leq \tilde{u}(t,x)\,\hbox{ for } \, 0<t\leq T,\,\,\,g_n(t)\leq x\leq h_n(t).$$  
Due to Proposition~\ref{cdepend}, one can pass to the limit $n\to\infty$ in the above inequalities, and obtain all the required conclusions. 
\end{proof}

\subsection{Parallel results for an auxiliary problem}\label{sec23}
In order to prove the existence of spreading speeds for problem~\eqref{eqf} in Part 2, we need to study the following auxiliary problem  
\begin{equation}\label{eqfunbd}\left\{\baa{ll}
u_t=du_{xx}+f(t,x,u),& -\infty<x<h(t),\quad t>0,\vspace{3pt}\\
u(t,h(t))=0,\,\,\, h'(t)=-\mu u_x(t,h(t)),&t>0, \vspace{3pt}\\
h(0)=h_0,\quad u(0,x)=u_0(x),& -\infty< x\leq h_0,\eaa\right.
\end{equation} 
with initial data $u_0\in {\mathcal{H}_+}(h_0)$, where
\begin{equation*}
{\mathcal{H}_+}(h_0):=\Big\{\phi\in C\big((-\infty,h_0]\big)\cap L^{\infty}\big((-\infty,h_0]\big):\,\phi(h_0)=0, \,\phi(x)>0 \hbox{ in }(-\infty,h_0) \Big\}.
\end{equation*}

All the results in the previous two subsections carry over to this problem without difficulties. Here we list these corresponding results while leaving their proofs to the interested reader.

\begin{theo}\label{existenceunbd}
Suppose that \eqref{zero} and \eqref{globalb} are satisfied. For any $u_0\in\mathcal{H}_+(h_0)$, problem \eqref{eqfunbd} admits a unique classical solution $\big(u(t,x),h(t)\big)$ defined for all $t>0$, and $h\in C^{1}\big((0,+\infty)\big)\cap C\big([0,\infty)\big)$, $u\in C^{1,2}(G_+)\cap C\big(\overline{G_+}\big)$ with $G_+=\big\{(t,x)\in\R^2:\,t\in (0,\infty),\,x\in (-\infty,h(t)] \big\}$.
Furthermore, for any $T>\tau>0$ and any $A\leq h_0$, there holds
\begin{equation*}
\big\|u\big\|_{C^{(1+\alpha)/2,1+\alpha}(G_{A,T}^{\tau})}+\big\|h\big\|_{C^{1+\alpha/2}([\tau,T])} \leq C,
\end{equation*}
where $G_{A,T}^\tau=\big\{(t,x)\in\R^2:\,t\in[\tau,T],\,x\in[A,h(t)] \big\}$, and $C$ is a positive constant depending on $\tau$, $T$, $f$ and $\|u_0\|_{L^{\infty}((-\infty,h_0])}$. 
\end{theo}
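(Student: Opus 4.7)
My plan is to mirror the proof of Theorem~\ref{existence}, replacing the shrinking-interval monotone approximation of the initial datum by an expanding-interval one whose left endpoint tends to $-\infty$. Fix $u_0\in\mathcal{H}_+(h_0)$, and choose $g_{0n}\searrow-\infty$, $h_{0n}\nearrow h_0$, together with functions $u_{0n}\in\mathcal{H}(g_{0n},h_{0n})$ with $u_{0n}\leq u_0$ on $[g_{0n},h_{0n}]$, $u_{0n}$ nondecreasing in $n$, and $u_{0n}\to u_0$ locally uniformly on $(-\infty,h_0]$ after extending $u_{0n}$ by zero. Theorem~\ref{existence} furnishes global classical solutions $(u_n,g_n,h_n)$ of \eqref{eqf} with these initial data, and Proposition~\ref{comparison} shows that $u_n$ and $h_n$ are nondecreasing in $n$ while $g_n$ is nonincreasing. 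Since $g_n(t)\leq g_{0n}\to-\infty$ uniformly in $t\in[0,T]$, the left free boundary disappears in the limit and only the right boundary survives.

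The next step is to obtain bounds uniform in $n$. The $L^\infty$-estimate from \eqref{globalb} gives $\|u_n(t,\cdot)\|_\infty\leq\|u_0\|_\infty e^{KT}=:C_1$, independent of $n$. Using $C_1$ in place of the individual $L^\infty$-bounds, I would rerun the barrier construction of Lemma~\ref{estderi}: a fixed positivity rectangle $[x_0-\delta,x_0+\delta]\times[0,T_0]$ sitting well to the left of $h_0$ is available because $u_0>0$ on $(-\infty,h_0)$, and both the auxiliary heat free-boundary problem \eqref{heat} and the elliptic barrier $W$ of \eqref{auxielli} can be posed on reference intervals independent of $g_{0n}$, yielding a uniform bound $0<h_n'(t)\leq C_2$ on $[\tau_0,T_0]$. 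The analogue of Lemma~\ref{est-g_n-h_n} with a constant $H$ depending only on $C_1$ and $T_0$ then gives $h_{0n}\leq h_n(t)\leq h_{0n}+Ht^{1/2}$, so $h_n$ is controlled uniformly on $[0,T]$. Finally, the flattening-diffeomorphism argument of Lemma~\ref{locex} together with $L^p$ theory and Sobolev embedding provides uniform $C^{(1+\alpha)/2,1+\alpha}$ bounds for $u_n$ on any subregion $G_{A,T}^{\tau}$ and uniform $C^{1+\alpha/2}$ bounds for $h_n$ on $[\tau,T]$.

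A diagonal extraction then yields a limit $(u,h)$ solving the first four relations of \eqref{eqfunbd} pointwise in $G_+$, with the claimed regularity estimate. The initial condition $u(0,\cdot)=u_0$ on $(-\infty,h_0]$ is recovered as in Lemma~\ref{ini-cond}: the zero extension of $u_0$ to $(-\infty,h(T)]$ generates a bounded continuous solution of a linear heat equation with potential $K$ that dominates every $u_n$, and sandwiching $u$ between $u_{0n}$ and this linear majorant as $t\to 0$ gives continuity up to the initial time. For uniqueness I would adapt the duality argument of Lemma~\ref{clauni}: given two solutions $(u_i,h_i)$, extend them by zero to the right of $h_i(t)$, restrict test functions to finite slabs $(0,T)\times(A,B)$ with $B>\max_i h_i(T)$ and $\phi=0$ on the parabolic boundary, deduce $\kappa(\tilde u_1)=\kappa(\tilde u_2)$ a.e.\ on each such slab, and let $A\to-\infty$.

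The main obstacle I anticipate is verifying that the constants in the key estimates of Lemmas~\ref{estderi}--\ref{locex}---which as stated depend on $h_0-g_0$---can actually be made independent of $n$ when the approximating interval $[g_{0n},h_{0n}]$ expands to $(-\infty,h_0]$. The three points to check are: (i) the $L^\infty$-bound $C_1$ depends only on $\|u_0\|_\infty$ and $T$; (ii) the barrier problems \eqref{heat} and \eqref{auxielli} can be set up on fixed reference intervals lying safely inside every $(g_{0n},h_{0n})$, so that $C_2$ and $\delta_0$ become $n$-uniform; and (iii) the comparison with the degenerate one-sided problem \eqref{degenrate} gives the $O(t^{1/2})$ growth of $h_n$ with a constant $H$ depending only on $C_1$ and $T$. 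Once these uniformities are in place, the rest of the argument transcribes directly from Section~\ref{sec21}.
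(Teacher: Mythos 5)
Your existence construction --- monotone approximation from below by compactly supported data, uniform $L^\infty$, barrier, and H\"older estimates, diagonal extraction, and recovery of the initial trace --- is exactly the intended adaptation of Section~2.1, and your discussion of why the constants remain $n$-uniform (the barrier intervals, the degenerate one-sided problem \eqref{degenrate}, the flattening map near the right endpoint only) is correct.

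The gap is in the uniqueness step. Lemma~\ref{clawek} derives the weak identity \eqref{dweakf} precisely because $[g(T),h(T)]$ is compactly contained in $I$: the extended solution $\tilde u$ vanishes in a neighbourhood of $\partial I$, so the boundary terms $d\bigl(u_x\phi-u\phi_x\bigr)\big|_{\partial I}$ produced by integrating $du\,\phi_{xx}$ by parts drop out, and only the free-boundary contribution $J(t)$ survives (which the $\kappa$-trick then absorbs). In your one-sided setting the artificial left endpoint $x=A$ of the slab $(A,B)$ is a fixed boundary across which $u$ does not vanish, so the same computation leaves an uncompensated term $d\int_0^T u(t,A)\,\phi_x(t,A)\,dt$ in the weak identity. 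After subtracting the two solutions this becomes $d\int_0^T\bigl(u_1(t,A)-u_2(t,A)\bigr)\phi_x(t,A)\,dt$, which does not vanish for the dual test functions $\phi_m$ of Lemma~\ref{clauni} (they satisfy $\phi_m=0$ at $x=A$, but nothing forces $\partial_x\phi_m=0$ there), so you cannot ``deduce $\kappa(\tilde u_1)=\kappa(\tilde u_2)$ a.e.\ on each such slab'' as claimed. Letting $A\to-\infty$ afterward does not rescue this, since the conclusion on the slab was never obtained. To close the gap you must control that boundary term directly --- e.g.\ show that $|u_1(t,A)-u_2(t,A)|$ decays to $0$ (at least faster than the growth of $\phi_{m,x}(t,A)$ as $A\to-\infty$, uniformly in $m$), which is plausible because the two solutions share the same initial data on $(-\infty,h_0]$ and a parabolic comparison with a Gaussian-type barrier bounds the difference far to the left --- or else adopt a different uniqueness argument (e.g.\ weighted test functions, or test functions with compact support in $x$ combined with a cutoff error estimate). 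As written, the uniqueness claim is not established.
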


\begin{rem}\label{runilip}
{\rm  We should remark that, for any given $u_0\in\mathcal{H}_+(h_0)$, let $\big(u(t,x),h(t)\big)$ be the unique solution of \eqref{eqfunbd} with initial datum $u(0,x)=u_0(x)$ in $(-\infty,h_0)$, then for $T>0$, $u(T,x)$ is Lipschitz continuous in $(-\infty,h(T)]$. It follows from the estimate in Theorem~\ref{existenceunbd} that the Lipschitz constant only depends on  $T$, $f$ and $\|u_0\|_{L^{\infty}((-\infty,h_0)]}$.}
\end{rem}

\begin{pro}\label{cdependunbd} Under the assumptions \eqref{zero} and \eqref{globalb}, the following conclusions hold.
\begin{itemize}
\item[(i)] For any given $h_0>0$ and any given sequence $(h_{0n})_{n\in\N}\subset \R^+$, let $u_0\in\mathcal{H}_+(h_0)$ and $u_{0n}\in\mathcal{H}_+(h_{0n})$. Suppose that $(u_{0n},h_{0n})$ converges to $(u_{0}, h_0)$ in $C_{loc}\big((-\infty,h_0]\big)\times\R$ as $n\to\infty$. Then for any given $T>0$, $\big(u_n(t,x), h_n(t)\big)$ converges to $\big(u(t,x),h(t)\big)$ in $C_{loc}\big((-\infty,h(t)]\big)\times\R$ as $n\to\infty$ unfiormly in $t\in[0,T]$, where $(u_n,h_n)$ is the solution for \eqref{eqfunbd} with $u_n(0,\cdot)=u_{0n}(\cdot)$ in $(-\infty,h_{0n}]$, and $(u,h)$ is the solution for  \eqref{eqfunbd} with $u(0,\cdot)=u_{0}(\cdot)$ in $(-\infty,h_{0}]$.

\item[(ii)]
In addition to the assumptions in {\rm (i)}, if $h_0=+\infty$, then for any given $T>0$,  $u_n(t,x)$ converges to $v(t,x;u_0)$ locally uniformly in $x\in\R$ and uniformly in $t\in[0,T]$, where $v(t,x;u_0)$ is the solution of the Cauchy problem \eqref{cauchy} with initial datum $v(0,\cdot)=u_0(\cdot)$ in $\R$. 
\end{itemize}
\end{pro}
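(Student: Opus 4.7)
My plan is to mimic the proof of Proposition~\ref{cdepend}, replacing the two-sided squeezes by a combination of truncations at a moving left endpoint (handled by the two-sided theory already established) and one-sided comparisons (handled by a half-line version of Proposition~\ref{comparison}). Once the squeeze is in place, Proposition~\ref{cdepend}(i) together with the uniqueness statement in Theorem~\ref{existenceunbd} collapses the upper and lower bounds to the common limit $(u,h)$.

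For part~(i), fix $T>0$ and, for each integer $k\geq 1$ and each $n$, construct a $C^2$ lower approximation $\underline u_{0n}^{k}\in\mathcal{H}(-k,h_{0n})$ with $\underline u_{0n}^{k}\leq u_{0n}$ on $(-k,h_{0n})$ and $\underline u_{0n}^{k}\to u_{0n}$ in $L^{\infty}((-k,h_{0n}))$ as $k\to\infty$. Theorem~\ref{existence} yields a two-sided classical solution $(\underline u_n^{k},\underline g_n^{k},\underline h_n^{k})$ of \eqref{eqf}, and the half-line version of Proposition~\ref{comparison} (set up along the lines used to prove Theorem~\ref{existenceunbd}) gives $\underline u_n^{k}\leq u_n$ and $\underline h_n^{k}\leq h_n$. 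Choose the approximants so that for each fixed $k$ the data $(\underline u_{0n}^{k},-k,h_{0n})$ converge as $n\to\infty$ in the sense of Proposition~\ref{cdepend}. Then Proposition~\ref{cdepend}(i) gives, uniformly on $[0,T]$,
\[
\bigl(\underline u_n^{k}(t,\cdot),\underline g_n^{k}(t),\underline h_n^{k}(t)\bigr)\longrightarrow \bigl(\underline u^{k}(t,\cdot),\underline g^{k}(t),\underline h^{k}(t)\bigr) \quad\text{as }n\to\infty,
\]
where $(\underline u^{k},\underline g^{k},\underline h^{k})$ solves \eqref{eqf} with the limiting initial data. Sending $k\to\infty$ pushes $\underline g^{k}(t)\to-\infty$ and, by uniqueness in Theorem~\ref{existenceunbd}, produces $(u,h)$ as the limit. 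This establishes the lower half of the squeeze.

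For the upper half, approximate $u_{0n}$ from above by $C^2$ functions $\overline u_{0n}^{k}\in\mathcal{H}_+(\overline h_{0n}^{k})$ with $\overline h_{0n}^{k}\searrow h_{0n}$ and $\overline u_{0n}^{k}\searrow u_{0n}$ locally uniformly (after first reducing, by passing to a subsequence, to a uniform $L^{\infty}$ bound on the $u_{0n}$). Crucially these approximants are \emph{not} compactly supported at $-\infty$, so we must apply the half-line existence theory of Theorem~\ref{existenceunbd} and the half-line comparison principle directly to extract $\overline u_n^{k}\to u$ and $\overline h_n^{k}\to h$ through a diagonal argument. Combined with the lower bound, this yields convergence of $(u_n,h_n)\to(u,h)$ uniformly in $t\in[0,T]$ and locally uniformly in $x$. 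Continuity of the limit at $t=0$ is handled exactly as in Lemma~\ref{ini-cond}, by comparison on a large bounded window $[-A,h_n(T)]$ with the linear majorant solving $\partial_t\overline u-d\partial_{xx}\overline u=K\overline u$ with Dirichlet boundary values and initial datum dominating $u_{0n}$.

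Part~(ii) follows the same scheme with $h_n(t)\to+\infty$: the right free boundary escapes every compact set (since $h_n(t)\geq h_{0n}\to+\infty$), so the right Dirichlet and Stefan conditions are invisible on any fixed compact subset of $\R$, and the interior Schauder bounds from Theorem~\ref{existenceunbd} allow us to pass to a subsequential Cauchy-problem limit, which by uniqueness for \eqref{cauchy} must equal $v(t,x;u_0)$. The main technical obstacle throughout is the non-compactly-supported upper approximation at the left infinity: one cannot enclose $u_{0n}$ in a compactly supported envelope, so the half-line comparison principle has to be set up as an independent ingredient rather than derived from the two-sided one. Once this is in place, the diagonal sandwich proceeds in exact parallel with the proof of Proposition~\ref{cdepend}.
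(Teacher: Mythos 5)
The paper gives no proof of Proposition~\ref{cdependunbd}; it only says the results of Subsections~\ref{sec21}--\ref{sec22} ``carry over \ldots leaving their proofs to the interested reader,'' so the intended argument is a direct translation of Proposition~\ref{cdepend}: sandwich $u_{0n}$ between $C^2$ half-line approximants $\underline u_{0n}\in C^2([\underline g_{0n},\underline h_{0n}])$ (compactly supported, extended by zero) and $\overline u_{0n}\in C^2((-\infty,\overline h_{0n}])$, compare using the half-line analogue of \cite[Lemma~3.5]{dlin} for $C^2$ data, and pass to the limit via the monotone scheme of Theorem~\ref{existenceunbd} and its Remark~\ref{approx-above}-type counterpart. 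Your plan captures the upper half of that sandwich faithfully; for the lower half you instead truncate at a moving left endpoint $-k$, invoke the already-proved two-sided Proposition~\ref{cdepend} for each fixed $k$, and then send $k\to\infty$. This is a genuine variant: it buys you the chance to reuse the fully written two-sided continuous-dependence result rather than re-deriving the half-line limit from scratch, at the cost of an iterated limit (first $n\to\infty$, then $k\to\infty$) whose interchange you do not justify. The exchange is not automatic: to get $\liminf_n u_n\geq u$ in the required topology (uniformly in $t\in[0,T]$, locally uniformly in $x$, \emph{including} up to $t=0$) you need uniform-in-$k$ control of the $\underline u^k$'s near $t=0$ of the type supplied by \eqref{g-h-es} and Lemma~\ref{ini-cond}; you acknowledge this but do not supply it, whereas the direct mimic gets it for free from the single monotone family.

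Two smaller points deserve attention. First, ``passing to a subsequence, to a uniform $L^\infty$ bound on the $u_{0n}$'' is not correct as stated: if $\sup_n\|u_{0n}\|_{L^\infty}=\infty$ no subsequence will repair it. What you actually need, and should say, is that $C_{loc}$-convergence of $u_{0n}$ together with $u_{0n}\in\mathcal{H}_+(h_{0n})$ and boundedness of $h_{0n}$ does \emph{not} by itself force uniform boundedness, so either a uniform $L^\infty$ bound is an implicit hypothesis of the proposition or the upper envelope $\overline u_{0n}$ has to be built from a common $L^\infty$ majorant $\tilde u_0$ (as in the proof of Theorem~\ref{existence}). Second, comparing the two-sided solution $(\underline u_n^k,\underline g_n^k,\underline h_n^k)$ extended by zero for $x<\underline g_n^k(t)$ against the half-line solution $(u_n,h_n)$ requires verifying that the zero-extended function is a lower solution in the sense of Proposition~\ref{comparisonunbd}(ii); the extension is only Lipschitz across the left free boundary, so one should run the comparison at the level of the $C^2$-initial-data approximants from which both are built (as the paper does in Proposition~\ref{comparison}) rather than appeal to ``the half-line comparison principle'' as a black box. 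Part (ii) is handled correctly: $h_n(t)\geq h_{0n}\to\infty$ ejects the right boundary from every compact set, and interior parabolic estimates plus uniqueness of the Cauchy problem finish the argument, with the same Lemma~\ref{ini-cond}-style barrier needed to recover the initial condition in the limit.
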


\begin{pro}\label{comparisonunbd}
Suppose that $T\in (0,\infty)$, that $\tilde{h}\in C\big([0,T]\big)\cap C^1\big((0,T]\big)$ and that $\tilde{u}\in C\big(\overline{\tilde{D}_{+,T}}\big)\cap C^{1,2}\big(\tilde{D}_{+,T}\big)$ with $\tilde{D}_{+,T}=\big\{(t,x)\in\R^2: \, 0<t\leq T,\, -\infty< x\leq \tilde{h}(t)\big\}$. 
\begin{itemize}
\item[(i)] If 
\begin{equation*}\left\{\baa{ll}
\tilde{u}_t \geq d\tilde{u}_{xx}+f(t,x,\tilde{u}),& 0<t\leq T,\,\,\,-\infty<x<\tilde{h}(t),\vspace{3pt}\\
\tilde{u}(t,\tilde{h}(t))= 0,\,\,\,\, \tilde{h}'(t)\geq -\mu \tilde{u}_x(t,\tilde{h}(t)),&0<t\leq T,\eaa\right.
\end{equation*} 
and
$$h_0\leq \tilde{h}(0)\quad\hbox{and}\quad u_0(x)\leq \tilde{u}(0,x)\,\hbox{ in } \, (-\infty,h_0],$$
 then the solution $(u,h)$ of problem \eqref{eqfunbd} satisfies
$$h(t)\leq \tilde{h}(t) \,\hbox{ in }\, (0,T]\,\hbox{ and }\,  u(t,x)\leq \tilde{u}(t,x)\,\hbox{ for } \, 0<t\leq T,\,\,\,-\infty<x \leq h(t).$$ 
\item[(ii)]
If in the assumptions of part {\rm (i)} all the inequalities are reversed, then the solution $(u,h)$ of problem \eqref{eqfunbd} satisfies
$$h(t)\geq \tilde{h}(t) \,\hbox{ in }\, (0,T]\,\hbox{ and }\,  u(t,x)\geq \tilde{u}(t,x)\,\hbox{ for } \, 0<t\leq T,\,\,\,-\infty<x \leq \tilde h(t).$$ 
\end{itemize}
\end{pro}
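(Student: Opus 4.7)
The plan is to mirror the proof strategy of Proposition~\ref{comparison}: reduce the unbounded problem \eqref{eqfunbd} to bounded approximations of the form \eqref{eqf}, carry out a first-touching-time argument on each bounded problem, and then pass to the limit via Proposition~\ref{cdependunbd}. I describe part (i); part (ii) is symmetric.

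First I would approximate the initial datum monotonically from below. Choose sequences $g_{0n}\searrow -\infty$ and $u_{0n}\in C^{2}([g_{0n},h_0])$ with $u_{0n}(g_{0n})=u_{0n}(h_0)=0$, $u_{0n}>0$ on $(g_{0n},h_0)$, $u_{0n}\leq u_0$ on $[g_{0n},h_0]$, and $u_{0n}\to u_0$ locally uniformly on $(-\infty,h_0]$ (extending by $0$ outside the supporting sets). Let $(u_n,g_n,h_n)$ be the classical solution of \eqref{eqf} with data $(u_{0n},g_{0n},h_0)$ supplied by Theorem~\ref{existence}. By Proposition~\ref{cdependunbd}(i), $(u_n,h_n)$ converges on $[0,T]$ to the unique classical solution $(u,h)$ of \eqref{eqfunbd} with initial datum $u_0$.

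Next, for each fixed $n$ I would show $h_n(t)\leq \tilde h(t)$ and $u_n(t,x)\leq \tilde u(t,x)$ on $[0,T]$ by examining the first touching time $t^{*}:=\sup\{t\in[0,T]:h_n(s)\leq\tilde h(s)\text{ for all }s\in[0,t]\}$. Since $h_n(0)=h_0\leq \tilde h(0)$, we have $t^{*}>0$. On the parabolic region $\{(t,x):0<t<t^{*},\ g_n(t)<x<h_n(t)\}$, the difference $w_n=\tilde u-u_n$ satisfies a bounded linear inequality $w_{n,t}-dw_{n,xx}\geq b_n(t,x)\,w_n$, where $b_n$ is bounded by the $C^1$-assumption on $f$ in $u$ and the uniform bound on $u_n,\tilde u$. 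The lateral and initial data are non-negative: $w_n(t,h_n(t))=\tilde u(t,h_n(t))\geq 0$ because $h_n(t)\leq \tilde h(t)$ and $\tilde u\geq 0$; $w_n(t,g_n(t))=\tilde u(t,g_n(t))\geq 0$; and $w_n(0,\cdot)=\tilde u(0,\cdot)-u_{0n}\geq 0$ by the monotone choice in the approximation. The parabolic maximum principle on this bounded domain gives $w_n\geq 0$.

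Suppose now $t^{*}<T$; I would derive a contradiction. At the touching point $(t^{*},h_n(t^{*}))=(t^{*},\tilde h(t^{*}))$, $w_n$ attains the value $0$ from non-negative values in a parabolic neighborhood inside the domain. After locally straightening the free boundary through the change of variables used in Section~\ref{sec21}, Hopf's boundary-point lemma applies and yields $(w_n)_x(t^{*},h_n(t^{*}))<0$, i.e. $(u_n)_x(t^{*},h_n(t^{*}))>\tilde u_x(t^{*},\tilde h(t^{*}))$. Consequently
\[
h_n'(t^{*})=-\mu (u_n)_x(t^{*},h_n(t^{*}))<-\mu\tilde u_x(t^{*},\tilde h(t^{*}))\leq \tilde h'(t^{*}).
\]
But $h_n-\tilde h\leq 0$ on $[0,t^{*}]$ with equality at $t^{*}$ forces $(h_n-\tilde h)'(t^{*-})\geq 0$, contradicting the strict inequality. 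Hence $t^{*}=T$, and the maximum principle step above then gives $u_n\leq \tilde u$ on the full region of overlap. Letting $n\to\infty$ and invoking Proposition~\ref{cdependunbd}(i) transfers the inequalities $h_n\leq \tilde h$ and $u_n\leq \tilde u$ to $(u,h)$.

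The main technical obstacle is the rigorous application of Hopf's lemma at the moving boundary in the touching-time step; this is precisely why I would invoke the boundary straightening from Section~\ref{sec21}, which converts the free-boundary into a flat portion of the lateral boundary where Hopf's lemma is immediate. A secondary subtlety is the need for the approximating data $u_{0n}$ to be ordered below $u_0$ so that the initial inequality $u_{0n}\leq \tilde u(0,\cdot)$ is preserved; this is why the sequence in the first step is chosen monotonically from below rather than using the two-sided approximation of Remark~\ref{approx-above}.
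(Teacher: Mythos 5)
Your overall strategy is the right one and is, in spirit, exactly what the paper intends: Section~2.3 explicitly leaves the proof of Proposition~\ref{comparisonunbd} ``to the interested reader,'' and the model proof is that of Proposition~\ref{comparison} (approximate $u_0$ from below by smooth data, apply the smooth-data comparison principle, pass to the limit via continuous dependence). Your re-derivation of the smooth-data case by a first-touching-time argument with Hopf's lemma is correct but not strictly necessary; the paper's proof of Proposition~\ref{comparison} simply invokes the proof of \cite[Lemma 3.5]{dlin}, and the one-sided analogue of that lemma is exactly what your touching-time argument reproduces. However, there are two genuine gaps.

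First, you fix the right endpoint of the approximating support at $h_0$, so $h_n(0)=h_0$. When $\tilde{h}(0)=h_0$ (which the hypothesis $h_0\leq\tilde{h}(0)$ permits) you get $h_n(0)=\tilde{h}(0)$, and your claim ``$t^*>0$'' does not follow: the boundaries may touch immediately. The paper's proof of Proposition~\ref{comparison} avoids this precisely by choosing $h_{0n}\nearrow h_0$ with $h_{0n}<h_0$, so that $h_n(0)=h_{0n}<h_0\leq\tilde{h}(0)$ and continuity gives $t^*>0$. You should make the same adjustment: take $u_{0n}\in C^2([g_{0n},h_{0n}])$ with $g_{0n}\searrow-\infty$ and $h_{0n}\nearrow h_0$ strictly.

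Second, your approximating data $u_{0n}$ are compactly supported, so $u_{0n}\notin\mathcal{H}_+(h_{0n})$ and $(u_n,g_n,h_n)$ solves the two-sided problem~\eqref{eqf}, not~\eqref{eqfunbd}. Proposition~\ref{cdependunbd}(i) is stated for sequences $u_{0n}\in\mathcal{H}_+(h_{0n})$ (positive on the entire half-line) and for solutions of~\eqref{eqfunbd}; it does not apply as stated to your sequence. Two clean fixes: either approximate $u_0$ by $C^2$ functions $u_{0n}\in\mathcal{H}_+(h_{0n})$ that are positive on all of $(-\infty,h_{0n})$ (so that Proposition~\ref{cdependunbd}(i) applies directly, after establishing the half-line analogue of the $C^2$-data comparison), or observe that the convergence of the two-sided solutions to the one-sided solution as $g_{0n}\to-\infty$ is precisely how the solution in Theorem~\ref{existenceunbd} is constructed (the half-line analogue of Lemma~\ref{lem-existence}), and invoke that construction rather than Proposition~\ref{cdependunbd}(i).
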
   

The pair of functions $(\tilde{u},\tilde{h})$ in part (i) of Proposition~\ref{comparisonunbd} is often called an upper 
solution for problem \eqref{eqfunbd}, and in part (ii) it is called a lower solution.

Lastly we note that each of the above listed results for problem~\eqref{eqfunbd} has a paralelle version for the following problem 
\begin{equation}\label{eqfunbdn}\left\{\baa{ll}
u_t=du_{xx}+f(t,x,u),& g(t)<x<\infty,\quad t>0,\vspace{3pt}\\
u(t,g(t))=0,\,\,\, g'(t)=-\mu u_x(t,g(t)),&t>0, \vspace{3pt}\\
g(0)=g_0,\quad u(0,x)=u_0(x),& g_0 \leq x < \infty,\eaa\right.
\end{equation} 
with initial data $u_0\in {\mathcal{H}_-}(g_0)$, where
\begin{equation*}
{\mathcal{H}_-}(g_0):=\Big\{\phi\in C\big([g_0,\infty)\big)\cap L^{\infty}\big([g_0,\infty)\big):\,\phi(g_0)=0, \,\phi(x)>0 \hbox{ in }(g_0,\infty) \Big\}.
\end{equation*}


\section{Spreading-vanishing dichotomy}\label{sec3}  
This section is devoted to the proof of Theorems~\ref{spva1}~and~\ref{spva2}, on the spreading-vanishing dichotomy and sharp criteria for spreading or vanishing. 

Throughout this section, the function $f(t,x,u)$ is supposed to satisfy the assumptions \eqref{zero}, \eqref{hyp2}, \eqref{period},
 \eqref{hyp1} and \eqref{monostable}. The arguments in this section mainly follow those used in \cite{dg1, dgp}, where 
similar free boundary problems in homogeneous, or  time-periodic media were considered in a radially smmetric setting. 
In order not to repeat the arguments in \cite{dg1, dgp}, in what follows, we only  provide the details when considerable 
changes are needed. 

For any fixed $y\in\R$ and any fixed $R>0$, let $\lambda_{1,R}^y$ be the real number $\lambda$ such that  there exists a $C^{1,2}(\R\times[-R,R])$ function $\psi$ satisfying 
\begin{equation}\label{localeig}
\left\{\baa{l}
\partial_t\psi-d\partial_{xx}\psi-\partial_{u}f(t,x+y,0)\psi=\lambda\psi\,\hbox{ in } \R\times(-R,R),\vspace{3pt}\\
\psi>0 \,\,\hbox{ in } \,\, \R\times (-R,R),\vspace{3pt}\\
\psi(t,-R)=\psi(t,R)=0 \,\,\hbox{ for  all } \,\,t\in\R,\vspace{3pt}\\
\psi(t,x)\hbox{ is $\omega$-periodic in t}.\eaa\right.
\end{equation}  
It is well known (see \cite{he}) that this real number $\lambda_{1,R}^y$ is  the principal eigenvalue of \eqref{localeig}, which exists uniquely, and $\psi$ is the 
(unique up to scalar multiplication) corresponding eigenfunction.  Furthermore, we have the following properties of $\lambda_{1,R}^y$.

\begin{lem}\label{proprin}
Let $\lambda_{1,R}^y$ be the principal eigenvalue of  \eqref{localeig}. Then $\lambda_{1,R}^y$ is continuous in $(y,R)\in\R\times(0,\infty)$, and $\lambda_{1,R}^y$ is $L$-periodic in $y$ and  strictly decreasing in $R>0$. Moreover, for any fixed $y\in\R$, there holds 
$$\lim_{R\to\infty} \lambda_{1,R}^y = \lambda_1(\mathcal{L})\quad \hbox{and}\quad \lim_{R\to0} \lambda_{1,R}^y=\infty,$$
where $\lambda_1(\mathcal{L})$ is the generalized principal eigenvalue given in \eqref{princi}.
\end{lem}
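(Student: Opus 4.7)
Here is my plan. The lemma contains five assertions ($L$-periodicity in $y$, continuity in $(y,R)$, strict monotonicity in $R$, and the two limits), so I will handle them in order, concentrating the real work on the $R\to\infty$ limit.

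The two easiest assertions are the $L$-periodicity in $y$ and the blow-up as $R\to 0$. The first is immediate from \eqref{period}: since $\partial_u f(t,x+y+L,0)\equiv\partial_u f(t,x+y,0)$, the problem \eqref{localeig} is invariant under $y\mapsto y+L$. The second follows by comparison with the Dirichlet Laplacian on $(-R,R)$: testing with $\cos(\pi x/(2R))$ and using the boundedness of $\partial_u f(\cdot,\cdot,0)$ yields
\[
\lambda_{1,R}^y\;\geq\;\frac{d\pi^2}{4R^2}-\bigl\|\partial_u f(\cdot,\cdot,0)\bigr\|_{L^\infty(\R^2)},
\]
which tends to $+\infty$ as $R\to 0^+$. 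For continuity in $(y,R)$, I would rescale via $x=Rz$ to transport \eqref{localeig} onto the fixed interval $[-1,1]$; the coefficients of the transported problem depend continuously on $(y,R)$, and the standard Krein--Rutman-based perturbation theory for time-periodic parabolic operators (see \cite{he}) then gives the continuous dependence of the principal eigenvalue.

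For strict monotonicity in $R$, fix $y$ and $0<R_1<R_2$, let $\psi_i$ be the eigenfunctions associated with $\lambda_{1,R_i}^y$, and set
\[
k^*:=\sup\bigl\{k>0:\psi_2\geq k\psi_1\text{ on }\R\times[-R_1,R_1]\bigr\}.
\]
Since $\psi_2>0$ on the lateral boundary $\R\times\{\pm R_1\}$ while $\psi_1$ vanishes there, time-periodicity forces $k^*\in(0,\infty)$, and $w:=\psi_2-k^*\psi_1$ is nonnegative on $\R\times[-R_1,R_1]$ and attains $0$ at some interior point. A direct computation gives
\[
\bigl(\partial_t-d\partial_{xx}-\partial_u f(t,x+y,0)\bigr)w=\lambda_{1,R_2}^y\,\psi_2-k^*\lambda_{1,R_1}^y\,\psi_1,
\]
so if one assumed $\lambda_{1,R_1}^y\leq\lambda_{1,R_2}^y$ the right-hand side would be strictly positive at the interior minimum of $w$, contradicting the parabolic strong maximum principle. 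This forces $\lambda_{1,R_1}^y>\lambda_{1,R_2}^y$.

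The heart of the lemma is the identification $\lim_{R\to\infty}\lambda_{1,R}^y=\lambda_1(\mathcal L)$. By monotonicity the limit $\lambda^\infty:=\lim_{R\to\infty}\lambda_{1,R}^y$ exists. One inequality goes exactly as in the monotonicity argument: for any admissible pair $(\lambda,\psi)$ in \eqref{princi}, the sliding-constant comparison between $\psi$ and $\psi_R$ on $\R\times[-R,R]$ (now $k^*:=\sup\{k:\psi\geq k\psi_R\}<\infty$ since $\psi>0$ on the lateral boundary while $\psi_R=0$ there) together with the strong maximum principle yields $\lambda\leq\lambda_{1,R}^y$, hence $\lambda_1(\mathcal L)\leq\lambda^\infty$. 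For the reverse inequality, I would normalise the eigenfunctions by $\psi_R(0,0)=1$, and along a sequence $R_n\to\infty$ use the parabolic Harnack inequality together with the interior parabolic Schauder estimates to extract a $C^{1,2}_{\mathrm{loc}}(\R^2)$-convergent subsequence. The limit $\psi^\infty\in C^{1,2}_\omega(\R^2)$ satisfies $\mathcal L\psi^\infty=\lambda^\infty\psi^\infty$, $\psi^\infty\geq 0$, and $\psi^\infty(0,0)=1$; a final Harnack application promotes $\psi^\infty$ to a strictly positive function on $\R^2$, which is therefore admissible in \eqref{princi} with $\lambda=\lambda^\infty$, yielding $\lambda^\infty\leq\lambda_1(\mathcal L)$. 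The delicate technical point I expect to fight with is obtaining a Harnack constant uniform in $R$, which is what prevents the normalised eigenfunctions from degenerating away from the reference point before one can pass to the limit.
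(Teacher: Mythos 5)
Your decomposition of the lemma into five assertions and your treatment of each are mathematically sound, but you should be aware that the paper handles the hard parts much more economically by citation: the non-increase in $R$ and the limit $\lambda_{1,R}^y\to\lambda_1(\mathcal L)$ are delegated to \cite[Prop.~3.2]{na1} and \cite[Thm.~2.6]{na3}, and the \emph{strict} decrease to an argument in \cite[Lemma~3.5]{bhr1}. You instead prove both from scratch. Your sliding-constant argument for strict monotonicity and your upper bound $\lambda_1(\mathcal L)\le\lambda^\infty$ are the standard comparison route and are correct; your compactness extraction (Harnack + interior Schauder, uniform in $R$ since $\lambda_{1,R}^y$ is bounded by monotonicity) giving the lower bound $\lambda^\infty\le\lambda_1(\mathcal L)$ is a genuinely different and more self-contained proof than the paper's, whose blanket citation obscures that a construction of this kind underlies it. For continuity the paper rescales both variables via $(t,x)\mapsto(R^2t,Rx)$, transporting everything onto $[-1,1]$ with coefficient $R^2\partial_uf(R^2t,Rx+y,0)$; the two-sided bound this yields, $\lambda_1^*-R^2m^*\le R^2\lambda_{1,R}^y\le\lambda_1^*-R^2m_*$, is exactly your $R\to0$ estimate and is also what the paper feeds into a compactness argument. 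Your $x$-only rescaling achieves the same end on compact sets of $R$ but forfeits the clean two-sided bound.

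Two small imprecisions worth fixing. In the monotonicity step, the constraint $k^*<\infty$ does \emph{not} come from the lateral boundary (there $\psi_2/\psi_1\to+\infty$ by Hopf); it comes from the compactness of the interior where both eigenfunctions are positive, with the boundary behaviour guaranteeing the infimum is attained strictly inside. More importantly, your phrase ``the right-hand side would be strictly positive at the interior minimum of $w$, contradicting the parabolic strong maximum principle'' conflates two cases: when $\lambda_{1,R_1}^y<\lambda_{1,R_2}^y$ the right side is strictly positive at the touching point and the contradiction is elementary (evaluate $\partial_t w-d\partial_{xx}w$ at an interior space-time minimum); when $\lambda_{1,R_1}^y=\lambda_{1,R_2}^y$ the right side vanishes there, and you need the strong maximum principle for the \emph{homogeneous} equation $(\partial_t-d\partial_{xx}-\partial_uf-\lambda_{1,R_1}^y)w=0$ to conclude $w\equiv0$, whence $\psi_2=k^*\psi_1$ vanishes at $x=\pm R_1$, which is the actual contradiction. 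Both cases close the argument, but keep them separate.
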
 

\begin{proof}
 We first prove that $\lambda_{1,R}^y$ is continuous in $(y,R)\in\R\times(0,\infty)$. For any given 
$(y,R)\in\R\times(0,\infty)$, let $\psi^y_R(t,x)>0$ be the principal eigenfunction corresponding to 
$\lambda_{1,R}^y$, normalized by   $\|\psi^y_R\|_{L^{\infty}(\R\times[-R,R])}=1$. 
Set $\varphi^y_R(t,x)=\psi^y_R(R^2t,Rx)$. Then $(\lambda, \psi)=(R^2\lambda_{1,R}^y, \varphi^y_R(t,x))$ 
is an eigen pair to the following eigenvalue problem
\begin{equation}
\left\{\baa{l}\label{changef}
\partial_t\psi-d\partial_{xx}\psi-\mu(t,x,y,R)\psi=\lambda \psi\,\,\,\hbox{ in } \,\,\R\times(-1,1),\vspace{3pt}\\
 \psi>0 \,\,\hbox{ in } \,\, \R\times (-1,1),\vspace{3pt}\\
\psi(t,-1)=\psi(t,1)=0 \,\,\hbox{ for  all } \,\,t\in\R,\vspace{3pt}\\
 \psi(t,x)\hbox{ is $\omega/R^2$-periodic in t},\eaa\right.
\end{equation} 
with
\[
\mu(t,x,y,R):=R^2\partial_u f(R^2t, Rx+y,0).
\]

Let us observe that, if we denote by 
 $\tilde{\lambda}_{1}(\mu)$ the principal eigenvalue of \eqref{changef}, then 
\[
\tilde\lambda_1(\mu)=\tilde \lambda_1(R^2\partial_u f(R^2t, Rx+y,0))=R^2\lambda_{1,R}^y,
\]
and if $\mu$ is replaced by a constant $\mu_0$, then
\[
\tilde\lambda_1(\mu_0)=\lambda^*_1-\mu_0,
\]
where $\lambda^*_1>0$ is the principal eigenvalue of the problem
$$
-d\varphi''=\lambda\varphi\,\hbox{ in } \,(-1,1);\,\,
\varphi(-1)=\varphi(1)=0.
$$

By the monotonicity of $\tilde\lambda_1(\mu)$ on $\mu$, we obtain
\begin{equation}\label{lamcom}
\lambda_1^*-R^2m^*=\tilde{\lambda}_1(R^2m^*)\leq\tilde{\lambda}_1(\mu)=
R^2\lambda_{1,R}^y\leq\tilde{\lambda}_1(R^2m_*)=\lambda^*_1-R^2m_*
\end{equation}
 where
\[
m_*:= \min_{ (t,x)\in\R^2}\partial_{u}f(t,x,0),\; m^*:=\max_{ (t,x)\in\R^2}\partial_{u}f(t,x,0).
\]
Therefore, for any finite closed interval $I\subset (0,\infty)$, $\tilde\lambda_1(\mu)=R^2  \lambda_{1,R}^y $ is bounded in 
$(y,R)\in\R\times I$. Furthermore, since the principal eigenvalue $\tilde{\lambda}_{1}(\mu)$ is unique, it then follows from standard parabolic estimates and a compactness argument that $\tilde{\lambda}_{1}(\mu)$ is uniformly continuous in $(y,R)\in\R\times I$. Thus, $\lambda_{1,R}^y$ is continuous in $(y,R)\in\R\times(0,\infty)$.

Since the function $\partial_{u}f(t,x+y,0)$ is $L$-periodic in $y$, by the uniqueness of the principal eigenvalue $\lambda_{1,R}^y$, it is obvious  that $\lambda_{1,R}^y$ is $L$-periodic in $y$.

Next, it follows from \cite[proposition 3.2]{na1} and \cite[Theorem 2.6]{na3} that $\lambda_{1,R}^y$ is nonincreasing in $R>0$ and 
converges to $\lambda_1(\mathcal{L})$ uniformly in $y\in\R$ as $R\to\infty$. Moreover, by similar arguments to those used in \cite[Lemma 3.5]{bhr1}, one concludes that $\lambda_{1,R}^y$ is strictly decreasing in $R>0$. 

Finally, we consider the convergence of  $\lambda_{1,R}^y$ as $R\to 0$. By \eqref{lamcom} we obtain
\[
\lim_{R\to 0} R^2\lambda_{1,R}^y=\lambda_1^*>0,
\]
which clearly implies 
 $\lim_{R\to0} \lambda_{1,R}^y=\infty$. The proof of Lemma~\ref{proprin} is thereby complete.
\end{proof}
 
In view of Lemma~\ref{proprin} and the assumption that $\lambda_1(\mathcal{L})<0$ in \eqref{monostable}, it follows that for any $y\in\R$, there exists a unique $R^*=R^*(y)$ such that 
\begin{equation}\label{lambda*}
\lambda_{1,R^*}^y=0\,\,\,\hbox{ and } \,\,\, \lambda_{1,R}^y<0 \, \hbox{ for } \,R>R^*,\quad\lambda_{1,R}^y>0\, \hbox{ for }R<R^*.
\end{equation}
Furthermore, the function $y\mapsto R^*(y)$ is continuous and $L$-periodic in $y\in\R$. 
Moreover, one has the following property.

\begin{lem}\label{localsta}
For any given $y\in\R$ and any given $R>R^*(y)$, the following problem 
\begin{equation}\label{localstaeq}
\left\{\baa{l}
\partial_tp-d\partial_{xx}p=f(t,x+y,p)\,\hbox{ in } \R\times(-R,R),\vspace{3pt}\\
p(t,-R)=p(t,R)=0 \,\,\hbox{ for  all } \,\,t\in\R,\eaa\right.
\end{equation}  
admits a unique positive time $\omega$-periodic solution $p_{R,y}\in C^{1,2}(\R\times[-R,R])$. Moreover, $p_{R,y}$ is globally asymptotically stable in the sense that for any nonnegative non-null function $\tilde{u}_0\in C([-R,R])$ with $\tilde{u}_0(-R)=\tilde{u}_0(R)=0$, there holds 
$$u_{R,y}(t+s,x;\tilde{u}_0) - p_{R,y}(t+s,x)\to 0 \,\,\hbox{ as }\,\, s\to\infty\,\,\hbox{ in }\,\, C^{1,2}_{loc}\big(\R\times[-R,R]\big),$$
where $u_{R,y}(t,x;\tilde{u}_0)$ is the unique solution of the following problem  
\begin{equation*}
\left\{\baa{l}
\partial_tu-d\partial_{xx}u=f(t,x+y,u)\,\hbox{ for }\, t>0,\,\,-R<x<R,\vspace{3pt}\\
u(t,-R)=u(t,R)=0 \,\,\hbox{ for  all } \,\,t>0,\vspace{3pt}\\
u(0,x)=\tilde{u}_0(x)\,\,\hbox{ for } \,\,-R\leq x\leq R.\eaa\right.
\end{equation*}
\end{lem}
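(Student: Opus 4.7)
The plan is to treat \eqref{localstaeq} as a classical monostable periodic–parabolic Dirichlet problem and to run the standard ``instability of zero $\Rightarrow$ persistence'' program (as in \cite{he, na1, na3}). The key input is that $R>R^*(y)$ is equivalent to $\lambda_{1,R}^y<0$ by \eqref{lambda*}, and Lemma~\ref{proprin} supplies a positive $\omega$-periodic principal eigenfunction $\psi=\psi_R^y$ solving \eqref{localeig}. This eigenfunction will produce a small subsolution, and hypothesis \eqref{hyp2} produces a large constant supersolution. I will combine these via monotone iteration of the period map, obtain existence, upgrade to uniqueness using the sub-homogeneity assumption \eqref{hyp1}, and finally squeeze any positive trajectory between two such iterations to conclude global stability.

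For existence, I would check that for $\varepsilon>0$ small $\underline p:=\varepsilon\psi$ is a strict subsolution: by \eqref{localeig},
\[
\partial_t\underline p - d\partial_{xx}\underline p - f(t,x+y,\underline p)
= \varepsilon\psi\Big(\partial_u f(t,x+y,0) + \lambda_{1,R}^y - \tfrac{f(t,x+y,\varepsilon\psi)}{\varepsilon\psi}\Big),
\]
and since $f(t,x+y,u)/u\to\partial_u f(t,x+y,0)$ as $u\to 0^+$ uniformly on $\R\times[-R,R]$ while $\lambda_{1,R}^y<0$, the right-hand side is negative for $\varepsilon$ small. For the supersolution I would take $\overline p\equiv M$, which works thanks to \eqref{hyp2} and the Dirichlet boundary condition. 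The Poincar\'e (time-$\omega$) map $T:v_0\mapsto u_{R,y}(\omega,\cdot;v_0)$ is order preserving by parabolic comparison and maps $[\underline p(0,\cdot),\overline p]$ into itself; iterating $T$ from $\underline p(0,\cdot)$ and from $\overline p$ yields, respectively, an increasing and a decreasing sequence converging (in $C^{1,2}$ by parabolic regularity) to minimal and maximal $\omega$-periodic solutions $\underline p_{R,y}\leq \overline p_{R,y}$ of \eqref{localstaeq}.

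Uniqueness follows from the Krasnoselskii-type sliding argument based on \eqref{hyp1}. If $p_1,p_2$ are two positive $\omega$-periodic solutions, set $\sigma^*:=\sup\{\sigma>0:\sigma p_1\le p_2\}$; one checks $\sigma^*>0$ via Hopf's lemma (both $p_i$ vanish on $\{\pm R\}$ with nonzero inward normal derivative, and are positive in the interior), and a standard contradiction using \eqref{hyp1} applied to $w:=p_2-\sigma^* p_1$ shows $\sigma^*\geq 1$; by symmetry $p_1\equiv p_2=:p_{R,y}$, so $\underline p_{R,y}=\overline p_{R,y}=p_{R,y}$. For the stability claim, given a nonnegative non-null $\tilde u_0\in C([-R,R])$ vanishing at $\pm R$, the strong maximum principle and Hopf's lemma applied to $u_{R,y}(\cdot,\cdot;\tilde u_0)$ at time $\omega$ produce $\varepsilon>0$ and $M$ large so that $\varepsilon\psi(\omega,\cdot)\leq u_{R,y}(\omega,\cdot;\tilde u_0)\leq M$. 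Parabolic comparison then sandwiches $u_{R,y}(t+s,x;\tilde u_0)$ between the two monotone iterates generated from $\varepsilon\psi$ and $M$, both of which converge to $p_{R,y}$; a standard parabolic compactness/bootstrap argument upgrades this to $C^{1,2}_{loc}(\R\times[-R,R])$ convergence.

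I expect the only delicate point to be the sliding step in the uniqueness argument: because the Dirichlet condition forces $p_1,p_2$ to vanish on the lateral boundary, the definition of $\sigma^*$ requires Hopf's lemma at the corners of $\R\times[-R,R]$ to ensure that the ratio $p_2/p_1$ extends continuously with a positive infimum; likewise, concluding $\sigma^*\geq 1$ from the strict version of \eqref{hyp1} requires showing that $w\not\equiv 0$ forces, via the periodic-parabolic strong maximum principle plus Hopf, an admissible enlargement of $\sigma^*$. Everything else is a routine adaptation of the arguments already invoked in \cite{dg1,dgp}.
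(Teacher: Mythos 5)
Your proof is correct and is precisely the argument behind \cite[Theorem 28.1]{he} that the paper invokes without detail: a small subsolution $\varepsilon\psi$ from the principal eigenfunction, the constant supersolution $M$ from \eqref{hyp2}, monotone iteration of the period map to obtain minimal and maximal periodic solutions, sub-homogeneity \eqref{hyp1} to force uniqueness, and sandwiching any orbit between the two iterations to get global stability. The one worry you flag is unnecessary: the domain $\R\times[-R,R]$ is a smooth cylinder with lateral boundary $\R\times\{-R,R\}$ and no corners, so the periodic-parabolic strong maximum principle and Hopf lemma apply there directly, making the sliding step routine.
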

\begin{proof}
Let $\lambda_{1,R}^y$ be the principal eigenvalue of problem \eqref{localeig}. Since $R>R^*(y)$, it 
is easy to see that $\lambda_{1,R}^y<0$. This together with the assumptions \eqref{hyp2} and \eqref{hyp1} 
imply all the conclusions of this lemma. The proof is almost identical to that of \cite[Theorem 28.1]{he}, so we omit the details. 
\end{proof} 

The above existence, uniqueness and stability results for problem \eqref{localstaeq} in a bounded domain can be extended to the following problem with an unbounded domain.

\begin{lem}\label{halfsta}
The  problem 
\begin{equation}\label{halfstaeq}
\left\{\baa{l}
\partial_t p_+ -d\partial_{xx} p_+=f(t,x,p_+)\,\hbox{ in } \R\times(-\infty,0),\vspace{3pt}\\
p_+(t,0)=0 \,\,\hbox{ for  all } \,\,t\in\R,\eaa\right.
\end{equation}  
admits a unique positive time $\omega$-periodic solution $p_+ \in C^{1,2}\big(\R\times(-\infty,0]\big)$. 
Moreover, $p_+$ is globally asymptotic stable in the sense that for any nonnegative non-null function $\bar{u}_0\in C\big((-\infty,0]\big)$ with $\bar{u}_0(0)=0$, there holds 
$$u_+(t+s,x;\bar{u}_0)\to p_+(t+s,x)\,\,\hbox{ as }\,\, s\to\infty\,\,\hbox{ in }\,\, C^{1,2}_{loc}\big(\R\times(-\infty,0]\big),$$
where $u_+(t,x;\bar{u}_0)$ is the unique solution of the following problem 
\begin{equation*}
\left\{\baa{l}
\partial_tu-d\partial_{xx}u=f(t,x,u)\,\hbox{ for }\, t>0,\,\,-\infty<x<0,\vspace{3pt}\\
u(t,0)=0 \,\,\hbox{ for  all } \,\,t>0,\vspace{3pt}\\
u(0,x)=\bar{u}_0(x)\,\,\hbox{ for } \,\,-\infty< x\leq 0.\eaa\right.
\end{equation*}
\end{lem}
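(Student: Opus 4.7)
The strategy is to obtain $p_+$ as the monotone increasing limit of $\omega$-periodic solutions $p_R$ of the truncated problem on $(-R,0)$ with zero Dirichlet data, to prove uniqueness via the sub-homogeneity in \eqref{hyp1}, and to establish global stability by sandwiching $u_+(t,\cdot;\bar u_0)$ between two families of bounded-domain solutions.

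For \emph{existence}, the $(-R,0)$ Dirichlet problem becomes, after the translation $\tilde x=x+R/2$, the setting of Lemma \ref{localsta} with $y=-R/2$ and half-length $R/2$; the hypothesis $R/2>R^*(-R/2)$ is satisfied for all $R$ large enough because Lemma \ref{proprin} yields $\lambda_{1,R/2}^{-R/2}\to\lambda_1(\mathcal{L})<0$, producing a unique positive $\omega$-periodic solution $p_R$. Comparing, on $(-R_2,0)$, the solution of the zero-Dirichlet problem from initial datum $p_{R_1}(0,\cdot)$ (extended by zero) with $p_{R_1}$ itself and letting $t\to\infty$ via Lemma \ref{localsta} yields the monotonicity $p_{R_1}\le p_{R_2}$ for $R_1<R_2$; the constant $M$ is a super-solution by \eqref{hyp2}, so $p_R\le M$ uniformly. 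Hence $p_R\nearrow p_+\le M$ pointwise, and parabolic interior regularity promotes this to $C^{1,2}_{loc}$ convergence, making $p_+$ a positive $\omega$-periodic classical solution of \eqref{halfstaeq}. For \emph{uniqueness}, given two bounded positive $\omega$-periodic solutions $p_1,p_2$, set $\kappa^*=\inf\{\kappa>0:\kappa p_2\ge p_1\}$; if $\kappa^*>1$ then $v=\kappa^* p_2-p_1\ge 0$ vanishes somewhere, and \eqref{hyp1} gives $\kappa^* f(t,x,p_2)>f(t,x,\kappa^* p_2)$, so $v_t-dv_{xx}>c(t,x)v$ in the interior for a bounded $c$; the strong maximum principle and Hopf's lemma preclude any touchdown, forcing $\kappa^*\le 1$, and symmetry gives $p_1\equiv p_2$.

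For \emph{stability}, assume without loss that $\bar u_0$ is bounded (since $\max(\|\bar u_0\|_\infty,M)$ is a super-solution of \eqref{halfstaeq}). The lower bound is direct: for each large $R$ pick a nontrivial continuous $v_R$ with $0\le v_R\le\bar u_0$ and $v_R(-R)=v_R(0)=0$; letting $u_R^-$ solve the $(-R,0)$ zero-Dirichlet problem from $v_R$, the inequality $u_R^-(t,-R)=0\le u_+(t,-R;\bar u_0)$ and comparison give $u_R^-\le u_+$ on $(-R,0)$, so Lemma \ref{localsta} yields $\liminf_{s\to\infty}u_+(t+s,x;\bar u_0)\ge p_R(t,x)$, and $R\to\infty$ gives $\liminf\ge p_+$. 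For the upper bound, I construct by a direct adaptation of Lemma \ref{localsta} a unique positive $\omega$-periodic attractor $q_R$ for the $(-R,0)$ problem with Dirichlet data $M$ at $x=-R$ and $0$ at $x=0$; since $u_+(t,-R;\bar u_0)\le M$, comparison on $(-R,0)$ gives $\limsup_{s\to\infty}u_+(t+s,\cdot;\bar u_0)\le q_R$ locally, and a second monotone limit $q_R\searrow p_+$ as $R\to\infty$ (by comparison with $p_+$ and the uniqueness just established) completes the squeeze.

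The cleanest ingredient is existence via the eigenvalue criterion, while the main obstacle is constructing $q_R$ and verifying $q_R\to p_+$: the nonhomogeneous boundary value $M$ at $x=-R$ precludes a verbatim application of Lemma \ref{localsta}, so its sub-homogeneity-based attractivity must be reworked---for example, by writing $u=w+\eta_R(x)$ for a harmonic corrector $\eta_R$ matching the Dirichlet data, or by direct monotone iteration between the super-solution $M$ and the sub-solution $p_+$. Once $q_R$ is in hand, the monotonicity $q_{R_1}\ge q_{R_2}$ for $R_1<R_2$ (viewing $q_{R_2}|_{(-R_1,0)}$ as a super-solution for the $q_{R_1}$-problem since $q_{R_2}(t,-R_1)\le M$) together with $q_R\ge p_+$ and uniqueness force $q_R\to p_+$ in $C^{1,2}_{loc}$, so the stability is proved in the asserted topology.
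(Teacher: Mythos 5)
Your existence argument is essentially the paper's: truncate to $(-R,0)$, invoke Lemma~\ref{localsta} for $R$ large (both your $\lambda^{-R/2}_{1,R/2}\to\lambda_1(\mathcal L)<0$ check and the paper's $R>2\overline R$ serve the same purpose), establish monotonicity $p_{R_1}\le p_{R_2}$ via the attractivity of $p_{R_1}$, bound above by $M$ using \eqref{hyp2}, and pass to the $C^{1,2}_{loc}$ limit by interior parabolic estimates. For uniqueness and global stability, the paper simply defers to the arguments of \cite{na3}, whereas you supply explicit sketches, so there is a genuine difference in emphasis.

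Your sliding argument for uniqueness has a gap that cannot be waved away on the unbounded domain $(-\infty,0]$. To define $\kappa^*=\inf\{\kappa>0:\kappa p_2\ge p_1\}$ you need the set to be nonempty, i.e.\ $p_1/p_2$ bounded, and to derive a contradiction from $\kappa^*>1$ you need the infimum of $v=\kappa^*p_2-p_1$ to be \emph{attained} at an interior point (or at $x=0$, where Hopf applies). Near $x=0$ the ratio is controlled by Hopf's lemma, but as $x\to-\infty$ you have no a priori lower bound $\inf_{t,\,x\le -1}p_2>0$, and the infimum of $v$ may drift to $-\infty$ without being attained. The standard fix --- and implicitly what \cite{na3} provides --- is a Liouville-type step: by parabolic estimates and the $L$-periodicity of $f$ in $x$, the translates $p_i(t,x-nL)$ converge along subsequences to an entire bounded positive $\omega$- and $L$-periodic solution, which by uniqueness of \eqref{psteady11} must be $p$; hence $p_i(t,x)\to p(t,x)$ as $x\to-\infty$, giving the needed uniform positivity and compactness for the sliding argument. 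Without stating this, the "vanishes somewhere" step is unjustified.

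For the upper bound in the stability proof, you honestly flag that building the attractor $q_R$ for the $(-R,0)$ problem with data $M$ at $x=-R$ requires a reworked sub-homogeneity argument. That is correct, but a shorter route avoids $q_R$ entirely: take the constant supersolution $\max\{\|\bar u_0\|_\infty,M\}$ as initial datum on $(-\infty,0]$ itself; the resulting solution is nonincreasing along $t\mapsto t+\omega$, hence converges (locally, via parabolic estimates) to a nonnegative $\omega$-periodic solution $\hat p\ge p_+$, and uniqueness (once repaired as above) forces $\hat p\equiv p_+$. Comparison then squeezes $u_+(t+s,\cdot;\bar u_0)$ between your lower family $p_R$ and this monotone iterate, giving the $C^{1,2}_{loc}$ convergence without any auxiliary bounded-domain attractor.
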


\begin{proof}
We only prove the existence of positive time periodic solution $p_+$ for problem \eqref{halfstaeq}, since the uniqueness and global asymptotic stability for $p_+$ follows from similar lines to those used in \cite{na3} for problem \eqref{psteady11}, due to the assumptions \eqref{hyp2}, \eqref{period}, \eqref{hyp1} and \eqref{monostable}.

For any $R>2\overline{R}$ where 
\begin{equation}\label{deoverr}
\overline{R}=\max_{y\in\R} R^*(y),
\end{equation}
it follows from Lemma~\ref{localsta} that the following problem
\begin{equation}\label{localeqex}
\left\{\baa{l}
\partial_tp_R-d\partial_{xx}p_R=f(t,x,p_R)\,\hbox{ in } \R\times(-R,0),\vspace{3pt}\\
p_R(t,x)\, \hbox{ is } \omega\hbox{-peirodic in } \,t\in\R,\vspace{3pt}\\ 
p_R(t,-R)=p_R(t,0)=0 \,\,\hbox{ for  all } \,\,t\in\R,\eaa\right.
\end{equation}  
has a unique positive solution $p_R(t,x)\in C^{1,2}\big(\R\times[-R,0]\big)$. Now we show that, for any $R_2>R_1>2\overline{R}$, there holds
\begin{equation}\label{comlocal}
p_{R_2}(t,x)\geq p_{R_1}(t,x)\,\hbox{ for all }\, (t,x)\in \R\times[-R_1,0].
\end{equation}
To do so, we choose a non-null nonnegative function $u_0\in C\big([-R_1,0]\big)$ with $u_0(0)=u_0(-R_1)=0$ such that $u_0(x)\leq p_{R_2}(0,x)$ for all $x\in[-R_1,0]$. Then one sees that $p_{R_2}(t,x)$ is a supersolution to the problem 
\begin{equation}\label{R1}
\left\{\baa{l}
\partial_tu_{R_1}-d\partial_{xx}u_{R_1}=f(t,x,u_{R_1})\,\hbox{ in }\, t>0,\,\,-R_1<x<0,\vspace{3pt}\\
u_{R_1}(t,-R_1)=u_{R_1}(t,0)=0 \,\,\hbox{ for  all } \,\,t>0,\vspace{3pt}\\
u_{R_1}(0,x)=u_0(x)\,\,\hbox{ in } \,\,-R_1\leq x\leq 0.\eaa\right.
\end{equation}
It follows from the parabolic maximum principle that 
$$u_{R_1}(t+n\omega,x)\leq p_{R_2}(t,x) \, \hbox{ for all } t>0,\, -R_1\leq x\leq 0,\,n\in\N.$$
Since $p_{R_1}$ is a globally asymptotically stable solution of problem \eqref{localeqex} with $R=R_1$, we have
$$\lim_{n\to\infty} u_{R_1}(t+n\omega,x)=p_{R_1}(t,x)\,\hbox{ for } t>0,\; -R_1\leq x\leq 0.$$
Hence \eqref{comlocal} holds.

Choose a sequence $\{R_i\}_{i\in\N} \subset [R_0,\infty)$ with $R_i\nearrow \infty$ as $i\to\infty$, and let $p_{R_i}(t,x)\in C^{1,2}\big(\R\times[-R_i,0]\big)$ be the positive solution to \eqref{localeqex} with $R=R_i$.
Since $f(t,x, M)\leq 0$, the positive constant $M$ is a super solution 
 to the equation satisfied by $p_{R_i}(t,x)$. Therefore by the above arguments we obtain
\[
p_{R_i}(t,x)\leq p_{R_{i+1}}(t,x)\leq M \mbox{ for } t\in\R, \; x\in [-R_i, 0],\; i\in\N.
\]
Hence we can define
\[
p_+(t,x):=\lim_{i\to\infty} p_{R_i}(t,x) \mbox{ for } t\in\R, x\in (-\infty, 0].
\]

For any $R_0>2\overline{R}$, by  parabolic estimates to problem \eqref{localeqex} with $R> R_0$ over the domain $[0,\omega]\times[-R_0,0]$, and a standard diagonal process, we see that
$$\lim_{i\to\infty}p_{R_i}(t,x)= p_{+}(t,x)\, \hbox{ in } \, C_{loc}^{1,2}\big(\R\times(-\infty,0]\big),$$
and hence  $p_{+}(t,x)$ is a positive solution to problem \eqref{halfstaeq}. The proof of Lemma~\ref{halfsta} is thereby complete. 
\end{proof}

Now we give the proof for Theorem~\ref{spva1}. In the sequel, $\big(u(t,x),g(t),h(t)\big)$ always denotes
 the unique solution of problem \eqref{eqf} with given initial datum $u_0\in\mathcal{H}(g_0,h_0)$, and $h_{\infty}$, $g_{\infty}$ are the limits of the functions $h(t)$ and $g(t)$ as $t\to\infty$, respectively.

\begin{lem}\label{vanishing}
If $h_{\infty}<\infty$ or $g_{\infty}>-\infty$, then both $h_{\infty}$ and $g_{\infty}$ are finite, and $h_{\infty}-g_{\infty}\leq 2R^*(y_\infty)$ where $y_\infty=(h_\infty+g_\infty)/2$. Moreover,
$$\lim_{t\to\infty} \max_{g(t)\leq x\leq h(t)} u(t,x)=0.$$
\end{lem}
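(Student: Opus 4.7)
The plan is a three-step contradiction argument built on parabolic compactness, the Hopf boundary-point lemma, and the time-periodic sub-solution $p_{R,y}$ from Lemma~\ref{localsta}. By the reflection $x\mapsto -x$, which preserves all the hypotheses on $f$, we may assume throughout that $h_\infty<\infty$. \textbf{Step 1} ($h'(t)\to 0$): since $h'>0$ and $h$ is bounded, $\int_1^\infty h'(t)\,dt<\infty$; exploiting the $\omega$-periodicity of $f$ and the uniform bound $u\leq M$ from \eqref{hyp2}, a time-shifted application of the local estimate \eqref{loces} near the right free boundary yields $\|h'\|_{C^{\alpha/2}([n,n+1])}\leq C$ with $C$ independent of $n\in\N$. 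If $h'(t_n)\geq\delta$ along some $t_n\to\infty$, this uniform Hölder bound forces $h'(t)\geq\delta/2$ on an interval of fixed positive length around each $t_n$, contradicting integrability. Hence $h'(t)\to 0$ and $u_x(t,h(t))=-h'(t)/\mu\to 0$.

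\textbf{Step 2} ($g_\infty>-\infty$). Suppose for contradiction that $g_\infty=-\infty$. Since $R^*(\cdot)$ is continuous and bounded (Lemma~\ref{proprin}), pick $y_0$ and $R$ with $R^*(y_0)<R<h_\infty-y_0$, so $[y_0-R,y_0+R]\subset(-\infty,h_\infty)$. For $t_0\in\omega\N$ large, $[y_0-R,y_0+R]\Subset(g(t_0),h(t_0))$ and $u(t_0,\cdot)\geq\delta>0$ there. As $g$ is decreasing and $h$ is increasing, $u$ is a super-solution on $[t_0,\infty)\times[y_0-R,y_0+R]$ of the associated Dirichlet problem, and the comparison principle combined with the attractivity in Lemma~\ref{localsta} yields
\[
\liminf_{t\to\infty}u(t,y_0)\geq \min_{s\in[0,\omega]}p_{R,y_0}(s,0)>0.
\]
On the other hand, fix any $t_n\to\infty$ and set $u_n(t,x):=u(t_n+t,x)$. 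Interior parabolic estimates together with the Schauder estimates near the right free boundary from \eqref{loces} give uniform $C^{1,2}$ bounds for $u_n$ on compact subsets of $\R\times(-\infty,h_\infty]$; after extraction, $u_n\to w$ in $C^{1,2}_{loc}$ with $w\geq 0$, $w_t-dw_{xx}=c(t,x)w$ for the bounded coefficient $c(t,x):=\int_0^1\partial_u f(t,x,sw)\,ds$, and (using $h(t_n+t)\to h_\infty$ and Step 1) $w(t,h_\infty)=w_x(t,h_\infty)=0$ for all $t\in\R$. The Hopf boundary-point lemma then forces $w\equiv 0$, so $u(t_n,y_0)\to w(0,y_0)=0$, contradicting the lower bound above. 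Therefore $g_\infty>-\infty$.

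\textbf{Step 3} (uniform decay and size bound). With both $g_\infty,h_\infty$ finite, the reasoning of Step 1 applied to $-g$ yields $g'(t)\to 0$. For any $t_n\to\infty$, the compactness argument of Step 2 produces a subsequential limit $w$ on $\R\times[g_\infty,h_\infty]$ with $w=w_x=0$ at both endpoints, and Hopf again forces $w\equiv 0$; in particular $u(t_n,x)\to 0$ for every $x\in(g_\infty,h_\infty)$. The uniform bound on $u_x$ from \eqref{loces}, combined with $u(t,g(t))=u(t,h(t))=0$, gives $u(t,x)\leq C\min\{x-g(t),h(t)-x\}$, upgrading pointwise decay to $\max_{[g(t),h(t)]}u(t,\cdot)\to 0$. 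Finally, if $h_\infty-g_\infty>2R^*(y_\infty)$, choose $R\in(R^*(y_\infty),(h_\infty-g_\infty)/2)$ so that $[y_\infty-R,y_\infty+R]\subset(g_\infty,h_\infty)$; the sub-solution argument of Step 2 at $y=y_\infty$ then produces $\liminf_{t\to\infty}u(t,y_\infty)>0$, contradicting the uniform decay just established. Hence $h_\infty-g_\infty\leq 2R^*(y_\infty)$. The main hurdle throughout is the Hopf step: one must first convert integrability of $h'$ into $h'(t)\to 0$ via a uniform-in-time Hölder estimate, and then secure enough regularity of the subsequential limit $w$ up to the now-stationary boundaries $\{x=h_\infty\}$ and $\{x=g_\infty\}$ for Hopf to apply and force $w\equiv 0$.
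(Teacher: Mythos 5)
Your proof is correct, but it takes a genuinely different route from the paper's. The paper sandwiches $u(t+n\omega,\cdot)$ between sub- and super-solutions built from the \emph{unique} time-periodic solutions $\bar w_T$ on the bounded strips and $\bar w$ on the half-line $(-\infty,h_\infty]$ (Lemma~\ref{halfsta}), concluding $u(t+n\omega,\cdot)\to\bar w$; it then applies the continuous-dependence theory for the free boundary problem to deduce that the limiting free boundary is frozen at $h_\infty$, so the Stefan condition yields $\partial_x\bar w(t,h_\infty)=0$, contradicting Hopf. You bypass the construction of $\bar w$ (and Lemma~\ref{halfsta}) entirely: you first derive $h'(t)\to 0$ directly from integrability of $h'$ plus a uniform-in-time H\"older bound, then use parabolic compactness to extract an arbitrary entire subsequential limit $w$ with $w=w_x=0$ on $\{x=h_\infty\}$, and let Hopf force $w\equiv 0$; the sub-solution $p_{R,y_0}$ then gives the contradiction. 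Your argument is more elementary and self-contained at the price of needing $h'(t)\to 0$ explicitly; the paper instead leans on the uniqueness/stability machinery of Lemma~\ref{halfsta}, which it has already developed and reuses elsewhere. You also reverse the order of the last two conclusions: you obtain uniform decay of $u$ first (via the gradient bound $u(t,x)\le C\min\{x-g(t),h(t)-x\}$) and deduce the bound $h_\infty-g_\infty\le 2R^*(y_\infty)$ from it, while the paper gets the size bound first and then uses the eigenvalue $\lambda_{1,(h_\infty-g_\infty)/2}^{y_\infty}\ge 0$ to conclude decay. One technical point you should tighten in Step 1: the constant $C$ in \eqref{loces} is stated to depend on the initial interval length, and when $g_\infty=-\infty$ the domain $h(t)-g(t)$ is unbounded, so a literal time-shifted application of \eqref{loces} does not immediately give a uniform $C^{\alpha/2}$ bound for $h'$; you should instead invoke a one-sided Schauder estimate in a fixed neighbourhood of the right free boundary (whose constant depends only on $\|u\|_\infty$ and $f$, not on the domain width), which is what the estimate is really being used for there.
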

\begin{proof}
Without loss of generality, we assume that $h_{\infty}<\infty$, and we proceed to show $h_{\infty}-g_{\infty}\leq 2R^*(y_{\infty})$. The 
proof for the case $g_{\infty}>-\infty$ is parallel. 

We first show that $g_{\infty}>-\infty$. Assume by contraction that $g_{\infty}=-\infty$. Let $T_0$ be the real number such that $h(t)-g(t)>2\overline{R}$ for all $t\geq T_0$, where $\overline{R}$ is given in \eqref{deoverr}.  It follows from Lemma~\ref{localsta} that, for any fixed $T>T_0$, the following problem 
\begin{equation*}
\left\{\baa{l}
\partial_t(w_T)-d\partial_{xx}w_T=f(t,x,w_T)\,\hbox{ in }\, t>0,\,\,g(T)<x<h(T),\vspace{3pt}\\
w_T(t,g(T))=w_T(t,h(T))=0 \,\,\hbox{ for  all } \,\,t>0,\vspace{3pt}\\
w_T(0,x)=u(T,x)\,\,\hbox{ in } \,\,g(T)\leq x\leq h(T),\eaa\right.
\end{equation*}
has a unique solution $w_T(t,x)\in C^{1,2}\big(\R\times[g(T),h(T)]\big)$, and
 $$w_T(t+s,x)- \bar{w}_T(t+s,x)\to 0 \,\hbox{ as } s\to\infty  \,\hbox{ in } C^{1,2}_{loc}\big(\R\times[g(T),h(T)]\big),$$
where $\bar{w}_T$ is the unique positive time $\omega$-periodic solution to the problem
\begin{equation*}
\left\{\baa{l}
\partial_t\bar{w}_T-d\partial_{xx}\bar{w}_T=f(t,x,\bar{w}_T)\,\hbox{ in } t>0,\, g(T)<x<h(T),\vspace{3pt}\\
\bar{w}_T(t,g(T))=\bar{w}_T(t,h(T))=0 \,\,\hbox{ for  all } \,\,t\in\R.\eaa\right.
\end{equation*}  
By the parabolic maximum principle, one has $u(t+T,x)\geq w_T(t,x)$ for all $t>0$, $g(T)\leq x\leq h(T)$, whence
$$\liminf_{n\to\infty}u(t+n\omega,x) \geq \bar{w}_T(t,x)  \,\hbox{ for all } t>0, \, g(T)\leq x\leq h(T).$$
One the other hand, let $\tilde{u}_0$ be the function in $C((-\infty, h_{\infty}])$ given by
 $\tilde{u}_0(x)=u(T,x)$ for $x\in[g(T),h(T)]$ and $\tilde{u}_0(x)=0$ for $x\in (-\infty,h_{\infty}])\setminus [g(T),h(T)]$. It follows from Lemma~\ref{halfsta} that the following problem 
\begin{equation*}
\left\{\baa{l}
\partial_tw-d\partial_{xx}w=f(t,x,w)\,\hbox{ in }\, t>0,\,\,-\infty<x<h_{\infty},\vspace{3pt}\\
w(t,h_{\infty})=0 \,\,\hbox{ for  all } \,\,t>0,\vspace{3pt}\\
w(0,x)=\tilde{u}_0(x)\,\,\hbox{ in } \,\,-\infty<x\leq h_{\infty},\eaa\right.
\end{equation*}
has a unique solution $w(t,x)\in C^{1,2}\big(\R\times (-\infty,h_{\infty}]\big)$, and 
$$w(t+s,x)- \bar{w}(t+s,x)\to 0 \,\hbox{ as } s\to\infty  \,\hbox{ in } C^{1,2}_{loc}\big(\R\times (-\infty,h_{\infty}]\big),$$
where $\bar{w}$ is the unique positive time $\omega$-periodic solution for problem
\begin{equation*}
\left\{\baa{l}
\partial_t\bar{w}-d\partial_{xx}\bar{w}=f(t,x,\bar{w})\,\hbox{ in } t>0,\, -\infty<x<h_{\infty},\vspace{3pt}\\
\bar{w}(t,h_{\infty})=0 \,\,\hbox{ for  all } \,\,t\in\R.\eaa\right.
\end{equation*}  
By the parabolic maximum principle, one has $u(t+T,x)\leq w(t,x)$ for all $t>0$, $-\infty<x<h_{\infty}$, whence
$$\limsup_{n\to\infty}u(t+n\omega,x) \leq \bar{w}(t,x)  \,\hbox{ for all } t>0, \, -\infty<x\leq h_{\infty}.$$
Furthermore, by simple modifications of  the proof of Lemma~\ref{halfsta}, one sees that $\bar{w}_T(t,x)$ (extended by 0 outside its supporting set) converges to $\bar{w}(t,x)$ as $T\to\infty$ locally uniformly in $\R\times(-\infty,h_\infty]$. Therefore, 
\begin{equation}\label{uconbw}
u(t+n\omega,x) \to \bar{w}(t,x)\,\hbox{ as }\,n\to\infty\,\hbox{ locally uniformly in }\,\R\times(-\infty,h_\infty],
\end{equation}
 which in particular implies that $u(n\omega,x)$ converges to $\bar{w}(0,x)$ as $n\to\infty$ locally uniformly in $(-\infty,h_\infty]$. Since $g(n\omega)\to-\infty$ and $h(n\omega)\to h_{\infty}$ as $n\to\infty$, and since $h_{\infty}<\infty$, it follows from the proof for the continuous dependence stated in Proposition~\ref{cdepend} that 
 \begin{equation}\label{uconbw1}
u(t+n\omega,x) \to \tilde{w}(t,x)\,\hbox{ as }\,n\to\infty\,\hbox{ locally uniformly in }\,t>0,\,-\infty<x\leq \tilde{h}(t),
\end{equation}
 where $(\tilde{w},\tilde{h})$ is the solution for the following free boundary problem
\begin{equation*}\left\{\baa{ll}
\tilde{w}_t=d\tilde{w}_{xx}+f(t,x,\tilde{w}),& -\infty<x<\tilde{h}(t),\quad t>0,\vspace{3pt}\\
\tilde{w}(t,h(t))=0,\,\,\, \tilde{h}'(t)=-\mu \tilde{w}_x(t,\tilde{h}(t)),&t>0, \vspace{3pt}\\
\tilde{h}(0)=h_{\infty},\quad \tilde{w}(0,x)=\bar{w}(0,x),& -\infty< x\leq h_{\infty}.\eaa\right.
\end{equation*} 
One then obtains from \eqref{uconbw} and \eqref{uconbw1} that $\tilde{h}(t)\equiv h_{\infty}$ and $\tilde{w}\equiv \bar{w}$. This implies that $\tilde{h}'(t)=0$ for all $t>0$, and hence $\partial_x\bar{w}(t,h_{\infty})=0$, which is a contradiction with the fact that $\partial_x\bar{w}(t,h_{\infty})<0$ by Hopf lemma.
Therefore, one gets that $g_{\infty}> -\infty$.

Once $g_{\infty}> -\infty$ is obtained, similar strategies used above would further imply that 
$h_{\infty}-g_{\infty}\leq 2R^*(y_{\infty})$ and the details will not be repeated here.
 Finally, we prove that $\lim_{t\to\infty} \max_{g(t)\leq x\leq h(t)} u(t,x)=0$. 
As a matter of fact, let $\bar{u}$ be the unique solution to the following problem 
\begin{equation*}\left\{\baa{ll}
\bar{u}_t=d\bar{u}_{xx}+f(t,x,\bar{u}),& t>0,\quad g_{\infty}<x<h_{\infty} ,\vspace{3pt}\\
\bar{u}(t,g_{\infty})=0,\,\,\, \bar{u}(t,h_{\infty})=0,&t>0, \vspace{3pt}\\
\bar{u}(0,x)=\bar{u}_0(x),&g_{\infty}\leq  x\leq h_{\infty},\eaa\right.
\end{equation*} 
where $\bar{u}_0(x)=u_0(x)$ for $x\in[g_0,h_0]$ and $\bar{u}_0(x)=0$ for $x\in [g_\infty,h_{\infty}]\setminus [g_0,h_0]$. 
It follows from the parabolic maximum principle that $0\leq u(t,x)\leq \bar{u}(t,x)$ for $t>0,\, x\in [g(t),h(t)]$. 
Furthermore, since $h_{\infty}-g_{\infty}\leq 2R^*(y_{\infty})$, the principal eigenvalue
 $\lambda_{1,(h_{\infty}-g_{\infty})/2}^{y_\infty}\geq 0$, and hence, $\lim_{t\to\infty} \bar{u}(t,x)=0$ 
uniformly in $x\in [g_{\infty},h_{\infty}]$ (see, e.g., \cite[Theorem 28.1]{he}). Therefore, 
 $\lim_{t\to\infty} \max_{g(t)\leq x\leq h(t)} u(t,x)=0$. The proof of Lemma~\ref{vanishing} is now complete.
\end{proof}

\begin{lem}\label{vspreading}
If $(g_\infty,h_\infty)=\R$, then 
$$\lim_{t\to\infty}\big|u(t,x)-p(t,x) \big|=0 \,\,\hbox{ locally uniformly in }\, x\in\R,$$
where $p(t,x)$ is the unique positive solution of problem \eqref{psteady11}.
\end{lem}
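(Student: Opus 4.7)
The plan is to sandwich $u(t,x)$ between functions that both tend to $p(t,x)$ locally uniformly in $x$ as $t\to\infty$. The upper bound will come from the Cauchy problem and its global stability property~\eqref{stablegene}; the lower bound will come from localized problems on large bounded intervals $[-R,R]$ (for which Lemma~\ref{localsta} provides a positive $\omega$-periodic attractor), followed by a passage to the limit $R\to\infty$ that identifies the resulting object with $p$.

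For the upper bound, let $V(t,x)$ denote the solution of the Cauchy problem~\eqref{cauchy} with the constant initial datum $V(0,\cdot)\equiv M^*:=\max\{M,\|u_0\|_{C([g_0,h_0])}\}$. Using that $V>0$ everywhere by the strong maximum principle and that $V(0,\cdot)\geq u_0$ on $[g_0,h_0]$, a standard linear parabolic comparison on the moving domain $\{g(t)<x<h(t)\}$ yields $u(t,x)\leq V(t,x)$. Property~\eqref{stablegene} then gives $V(t,x)-p(t,x)\to 0$ locally uniformly in $x$, hence $\limsup_{t\to\infty}[u(t,x)-p(t,x)]\leq 0$ locally uniformly.

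For the lower bound, fix $R>\overline{R}=\max_{y}R^*(y)$. Since $(g_\infty,h_\infty)=\R$, there is $T_R>0$ with $[-R,R]\subset(g(t),h(t))$ for all $t\geq T_R$, and by the strong maximum principle $u(T_R,\cdot)>0$ on $[-R,R]$. Choose $\phi\in C^2([-R,R])$ with $\phi(\pm R)=0$, $\phi>0$ on $(-R,R)$, and $\phi\leq u(T_R,\cdot)$, and let $w_R$ solve the fixed-boundary problem on $(T_R,\infty)\times(-R,R)$ with nonlinearity $f(t,x,w_R)$, zero Dirichlet data, and initial value $w_R(T_R,\cdot)=\phi$. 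Parabolic comparison (using $u(t,\pm R)>0=w_R(t,\pm R)$) gives $w_R\leq u$ on $[T_R,\infty)\times[-R,R]$. Since $R>R^*(0)$, Lemma~\ref{localsta} with $y=0$ gives $w_R(t,x)-p_{R,0}(t,x)\to 0$ uniformly in $x\in[-R,R]$ as $t\to\infty$, where $p_{R,0}$ is the unique positive $\omega$-periodic solution of~\eqref{localstaeq} with $y=0$. Hence $\liminf_{t\to\infty}[u(t,x)-p_{R,0}(t,x)]\geq 0$ uniformly on compact subsets of $(-R,R)$.

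To pass to the limit $R\to\infty$, I would argue as in Lemma~\ref{halfsta}: the family $\{p_{R,0}\}_{R>\overline{R}}$ is nondecreasing in $R$ (via comparison, using that $p_{R_2,0}$ is a positive supersolution for the problem defining $p_{R_1,0}$ when $R_2>R_1$) and bounded above by $M$, so interior parabolic estimates and a diagonal argument yield a monotone limit $p_\infty$ in $C^{1,2}_{\mathrm{loc}}(\R^2)$, which is a positive, bounded, $\omega$-periodic (in $t$) classical solution of $\partial_t p_\infty -d\partial_{xx}p_\infty=f(t,x,p_\infty)$ on $\R^2$. Viewing $p_\infty(t,\cdot)$ as the unique Cauchy solution of~\eqref{cauchy} with initial datum $p_\infty(0,\cdot)$, property~\eqref{stablegene} gives $p_\infty(t+s,x)-p(t+s,x)\to 0$ locally uniformly as $s\to\infty$; since $p_\infty-p$ is $\omega$-periodic in $t$, this forces $p_\infty\equiv p$. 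Dini's theorem upgrades the monotone convergence $p_{R,0}\to p$ to locally uniform in $(t,x)$, and combining with the lower bound already established yields $\liminf_{t\to\infty}[u(t,x)-p(t,x)]\geq 0$ locally uniformly in $x$, which together with the upper bound completes the proof. The most delicate step I expect is the identification $p_\infty\equiv p$: the limit $p_\infty$ has no a priori $L$-periodicity in $x$, so the uniqueness of bounded positive $\omega$-periodic solutions on all of $\R^2$ must be extracted from the global attractor property~\eqref{stablegene} rather than from the uniqueness within the periodic class of~\cite{na1,na3}.
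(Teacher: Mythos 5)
Your argument is correct and matches the approach the paper intends: the paper omits the proof and cites \cite[Theorem 3.4]{dgp}, whose strategy is exactly the sandwich you use—bound $u$ from above by a solution of the Cauchy problem \eqref{cauchy} converging to $p$ via \eqref{stablegene}, and from below by solutions of Dirichlet problems on $[-R,R]$ furnished by Lemma~\ref{localsta}, then let $R\to\infty$. Your resolution of the identification $p_\infty\equiv p$ by feeding $p_\infty(0,\cdot)$ into \eqref{stablegene} and exploiting $\omega$-periodicity in $t$ is the right way to close the argument, precisely because $p_\infty$ is not known a priori to be $L$-periodic in $x$.
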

\begin{proof}
The proof follows from similar arguments as those used in the proof of \cite[Theorem 3.4]{dgp}, so we omit the details.
\end{proof}

Theorem~\ref{spva1} clearly follows directly from Lemmas~\ref{vanishing}~and~\ref{vspreading}.

 \begin{lem}\label{h0hinfty}
If $h_0-g_0\geq  2R^*(y_0)$ with $y_0=(h_0+g_0)/2$, then $(g_{\infty},h_{\infty})=\R$ and spreading always occurs.
\end{lem}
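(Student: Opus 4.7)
The plan is to argue by contradiction using Theorem~\ref{spva1}: if spreading does not happen then vanishing must, and I will construct a fixed-boundary lower barrier on $[g(t_0),h(t_0)]$ (for some small $t_0\ge 0$) that stabilizes to a positive periodic profile, contradicting Lemma~\ref{vanishing}.

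First I would exploit the strict monotonicity of the free boundaries. By the Hopf lemma applied to the equation for $u$, $h'(t)>0$ and $g'(t)<0$ for every $t>0$, and $h, g$ are continuous at $t=0$. Thus for any $t_0>0$ we have $h(t_0)>h_0$, $g(t_0)<g_0$, and consequently
\[
h(t_0)-g(t_0)>h_0-g_0\ge 2R^*(y_0).
\]
Setting $R_1:=(h(t_0)-g(t_0))/2$ and $y_1:=(h(t_0)+g(t_0))/2$, continuity of $R^*$ from Lemma~\ref{proprin} together with $y_1\to y_0$ and $R_1\to(h_0-g_0)/2\ge R^*(y_0)$ as $t_0\to 0^+$ lets me choose $t_0>0$ small enough so that $R_1>R^*(y_1)$. (If $h_0-g_0>2R^*(y_0)$ strictly, one could just take $t_0=0$, but this unified choice handles the borderline case.)

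Next I introduce the auxiliary fixed-boundary problem on $[g(t_0),h(t_0)]$ for $t\ge t_0$ with Dirichlet zero boundary values and initial datum $v(t_0,x)=u(t_0,x)$. Because $g(t)\le g(t_0)$ and $h(t)\ge h(t_0)$ for all $t\ge t_0$, we have $u(t,g(t_0))\ge 0=v(t,g(t_0))$ and $u(t,h(t_0))\ge 0=v(t,h(t_0))$, so the parabolic comparison principle gives $u(t,x)\ge v(t,x)$ on $\{t\ge t_0,\ g(t_0)\le x\le h(t_0)\}$. Since $R_1>R^*(y_1)$, Lemma~\ref{localsta} (after the translation $x\mapsto x-y_1$) shows that the initial value $v(t_0,\cdot)$, which is nonnegative and nonzero, leads to
\[
v(t+s,x)-p_{R_1,y_1}(t+s,x-y_1)\to 0\quad\text{as }s\to\infty,
\]
in $C^{1,2}_{loc}$, where $p_{R_1,y_1}$ is the unique positive time $\omega$-periodic solution from Lemma~\ref{localsta}. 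In particular $\liminf_{t\to\infty}u(t,y_1)\ge \min_{s\in[0,\omega]}p_{R_1,y_1}(s,0)>0$, which contradicts the vanishing conclusion $\max_{g(t)\le x\le h(t)}u(t,x)\to 0$ in Lemma~\ref{vanishing}. Hence vanishing is impossible, and Theorem~\ref{spva1} forces $(g_\infty,h_\infty)=\mathbb{R}$.

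The only genuine difficulty I anticipate is the borderline case $(h_0-g_0)/2=R^*(y_0)$, in which $\lambda^{y_0}_{1,R_0}=0$ and Lemma~\ref{localsta} is not directly applicable at $t=0$. The device of shifting to a positive time $t_0>0$ and invoking the continuity of $R^*$ resolves this, and this continuity argument is really the one place where Lemma~\ref{proprin} is essential to the proof.
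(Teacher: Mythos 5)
Your overall strategy---contradict Lemma~\ref{vanishing} by bounding $u$ from below by the solution of a fixed-boundary problem on a sufficiently large interval and invoking Lemma~\ref{localsta}---is the same as the paper's, and your treatment of the case $h_0-g_0>2R^*(y_0)$ is correct. However, there is a genuine gap in how you handle the borderline case $h_0-g_0=2R^*(y_0)$.

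The step ``continuity of $R^*$ $\ldots$ lets me choose $t_0>0$ small enough so that $R_1>R^*(y_1)$'' does not follow. As $t_0\to 0^+$, both sides of the desired inequality tend to $R^*(y_0)$: you only know that $R_1(t_0)>R^*(y_0)$ for every $t_0>0$ and that $R^*(y_1(t_0))\to R^*(y_0)$, but there is no control on the sign of $R^*(y_1(t_0))-R^*(y_0)$ relative to the (possibly smaller) gap $R_1(t_0)-R^*(y_0)$. Writing $\alpha=h(t_0)-h_0>0$ and $\beta=g_0-g(t_0)>0$, one has $R_1-R^*(y_0)=(\alpha+\beta)/2$ and $y_1-y_0=(\alpha-\beta)/2$; since $|y_1-y_0|$ can be comparable to $(\alpha+\beta)/2$ and $R^*$ is only known to be continuous (no Lipschitz bound is available from Lemma~\ref{proprin}), the inequality $R_1>R^*(y_1)$ cannot be deduced for small $t_0$. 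In other words, the difficulty you flagged at the end as ``resolved'' is in fact the unresolved spot.

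The issue is that you recentred the comparison interval at $y_1$, which moves the value of $R^*$ away from the one you control. The paper instead keeps $y_0$ as the centre: after picking $t_0>0$ (the paper uses $t_0=1$), it selects a sub-interval $[\tilde g_0,\tilde h_0]\subset(g(t_0),h(t_0))$ with midpoint $y_0$ and half-width strictly larger than $R^*(y_0)$, then compares with a free boundary solution on that sub-interval and reduces to the strict case. You can repair your argument the same way: set $\rho=\min\{h(t_0)-h_0,\ g_0-g(t_0)\}>0$ and run your fixed-boundary comparison on $[g_0-\rho,\,h_0+\rho]\subset[g(t_0),h(t_0)]$, which has midpoint $y_0$ and half-width $(h_0-g_0)/2+\rho>R^*(y_0)$, so Lemma~\ref{localsta} applies with the known $R^*(y_0)$. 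With that modification the rest of your argument goes through.
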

\begin{proof} We first consider the case $h_0-g_0>  2R^*(y_0)$.
Assume by contradiction that $(g_{\infty},h_{\infty}) \subsetneqq \R$. It then follows from Lemma~\ref{vanishing} that 
both $h_{\infty}$ and $g_{\infty}$ are finite, and that $\lim_{t\to\infty} \max_{g(t)\leq x\leq h(t)} u(t,x)=0$. 
On the other hand, let $\tilde{u}(t,x)$ be the unique solution of the following problem 
\begin{equation*}
\left\{\baa{l}
\partial_t\tilde{u}-d\partial_{xx}\tilde{u}=f(t,x,\tilde{u})\,\hbox{ in }\, t>0,\,\,g_0<x<h_0,\vspace{3pt}\\
\tilde{u}(t,g_0)=\tilde{u}(t,h_0)=0 \,\,\hbox{ for  all } \,\,t>0,\vspace{3pt}\\
\tilde{u}(0,x)=u_0(x)\,\,\hbox{ in } \,\,g_0\leq x\leq h_0.\eaa\right.
\end{equation*}
Since $h_0-g_0> 2R^*(y_0)$, it follows from Lemma~\ref{localsta} that 
$\lim_{t\to\infty}\tilde{u}(t,x)>0$ for all  $x\in (g_0,h_0)$.
By the parabolic maximum principle, one has $u(t,x)\geq \tilde{u}(t,x)$ for all $t>0$, $g_0\leq x\leq h_0$.
One then obtains  $\liminf_{t\to\infty}u(t,x)>0$ for all $g_0<x<h_0$, which is a contradiction. Therefore, $(g_{\infty},h_{\infty})=\R$ and spreading always occurs.

Next we consider the remaining case $h_0-g_0=  2R^*(y_0)$. 
Let $(u,g,h)$ be the unique solution of \eqref{eqf} with initial data $(u_0, g_0, h_0)$. 
Then $h(1)>h_0>g_0>g(1)$. Therefore there exist
 $\tilde g_0\in (g(1), g_0)$ and $\tilde h_0\in (h_0, h(1))$ such that $y_0$ is 
the center of the interval $[\tilde g_0, \tilde h_0]$. We now choose $\tilde u_0(x)$ 
such that it is continuous in $[\tilde g_0, \tilde h_0]$, 
\[
\tilde u_0(\tilde g_0)=\tilde u_0(\tilde h_0)=0,\; 0<\tilde u_0(x)<u(1,x) \mbox{ for } x\in (\tilde g_0, \tilde h_0).
\]
Let $(\tilde u, \tilde g,\tilde h)$ be the unique solution of \eqref{eqf} with initial data $(\tilde u_0, \tilde g_0, \tilde h_0)$. Then by the comparison principle we have
\[
h(1+t)\geq \tilde h(t),\; g(1+t)\leq \tilde g(t),\; u(1+t, x)\geq \tilde u(t, x) \mbox{ for } t>0,\; x\in [\tilde g(t),\tilde h(t)].
\]
Since $\tilde h(0)-\tilde g(0)>2R^*(y_0)$, by what has been proved above, we have $\lim_{t\to\infty}[-\tilde g(t)]=\lim_{t\to\infty} \tilde h(t)=\infty$. It follows that $h_\infty=\infty$, $g_\infty=-\infty$, and hence spreading occurs.
\end{proof}

Lemma~\ref{h0hinfty} gives the first statement of Theorem~\ref{spva2}. Next, we turn to describe the strategy for the proof of the second one. As a matter of fact, by minor modifications of the proof for \cite[Lemma 2.8]{dg1} and \cite[Lemma 3.10]{dgp}, one concludes the following two properties.

\begin{lem}\label{criteria1}
Suppose that $h_0-g_0< 2R^*(y_0)$ with $y_0=(h_0+g_0)/2$. Then there exists $\mu^0>0$ depending on $u_0$ such that spreading occurs if $\mu\geq \mu^0$. 
\end{lem}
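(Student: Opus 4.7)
The plan is to show that, for sufficiently large $\mu$, the width $h(T_0)-g(T_0)$ will exceed $2\overline R:=2\max_{y\in\R}R^*(y)$ at some finite time $T_0>0$. The maximum $\overline R$ is finite by continuity and $L$-periodicity of $R^*$. Once this is achieved, Lemma~\ref{h0hinfty} applied to \eqref{eqf} restarted at $T_0$ with initial data $(u(T_0,\cdot), g(T_0), h(T_0))$ yields spreading, since then $h(T_0)-g(T_0)\geq 2\overline R\geq 2R^*((g(T_0)+h(T_0))/2)$.

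The width bound is produced by a lower solution built from the reaction-free one-phase Stefan problem. Since $f(t,x,0)=0$ and $f$ is $C^1$ in $u$, one can choose $K_0>0$ such that $f(t,x,u)\geq -K_0 u$ for all $(t,x)\in\R^2$ and $u\in[0,\|u_0\|_{L^\infty}]$. Fix any $T_0>0$ and, for each $\mu$, let $(\tilde v,\tilde g,\tilde h)$ denote the classical solution (furnished by Theorem~\ref{existence} with $f\equiv 0$) of
\begin{equation*}
\left\{\baa{l}
\tilde v_t=d\tilde v_{xx},\quad \tilde g(t)<x<\tilde h(t),\; 0<t\leq T_0,\\
\tilde v(t,\tilde g(t))=\tilde v(t,\tilde h(t))=0,\; 0<t\leq T_0,\\
\tilde g'(t)=-\tilde\mu\,\tilde v_x(t,\tilde g(t)),\quad \tilde h'(t)=-\tilde\mu\,\tilde v_x(t,\tilde h(t)),\\
\tilde g(0)=g_0,\; \tilde h(0)=h_0,\; \tilde v(0,x)=u_0(x),
\eaa\right.
\end{equation*}
with reduced Stefan coefficient $\tilde\mu:=\mu e^{-K_0T_0}$. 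Setting $\tilde u(t,x):=e^{-K_0 t}\tilde v(t,x)$, a direct computation gives $\tilde u_t-d\tilde u_{xx}=-K_0\tilde u\leq f(t,x,\tilde u)$; moreover, since $\tilde u_x=e^{-K_0t}\tilde v_x$ and $\tilde\mu e^{K_0t}\leq\mu$ on $[0,T_0]$, one also obtains $\tilde g'(t)\geq -\mu\tilde u_x(t,\tilde g(t))$ and $\tilde h'(t)\leq -\mu\tilde u_x(t,\tilde h(t))$. Thus $(\tilde u,\tilde g,\tilde h)$ is a lower solution for \eqref{eqf}, and Proposition~\ref{comparison}(ii) yields $g(t)\leq\tilde g(t)$ and $h(t)\geq\tilde h(t)$ on $[0,T_0]$. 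It therefore remains to show $\tilde h(T_0)-\tilde g(T_0)\to\infty$ as $\mu\to\infty$.

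For this, integrating $\tilde v_t=d\tilde v_{xx}$ over $[0,T_0]\times[\tilde g(t),\tilde h(t)]$ and converting the boundary fluxes via the Stefan condition gives the mass identity
\begin{equation*}
(\tilde h(T_0)-\tilde g(T_0))-(h_0-g_0)=\frac{\tilde\mu}{d}\Bigl[\int_{g_0}^{h_0}u_0(x)\,dx-\int_{\tilde g(T_0)}^{\tilde h(T_0)}\tilde v(T_0,x)\,dx\Bigr].
\end{equation*}
Arguing by contradiction, suppose there is a sequence $\mu_n\to\infty$ along which the widths $\tilde h_n(T_0)-\tilde g_n(T_0)$ stay bounded by some $M$. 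The identity above then forces $\int\tilde v_n(T_0,\cdot)\,dx\to\int u_0\,dx$ as $n\to\infty$. However, $\tilde v_n\leq\|u_0\|_{L^\infty}$ by the maximum principle, $\tilde g_n,\tilde h_n$ are monotone and uniformly bounded, and interior parabolic Schauder estimates for the heat equation are independent of $\tilde\mu_n$; so, up to a subsequence, $\tilde v_n\to\tilde v^*$ in $C^{1,2}_{\mathrm{loc}}$ of the interior limit region and $\tilde g_n\to\tilde g^*$, $\tilde h_n\to\tilde h^*$ pointwise, with $\tilde v^*$ solving the heat equation on $\{\tilde g^*(t)<x<\tilde h^*(t)\}$, Dirichlet-zero on the lateral boundary, and with initial data $u_0$. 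The strong maximum principle gives $\tilde v^*>0$ in the interior, so Hopf's lemma implies strict $L^1$-dissipation $\int\tilde v^*(T_0,\cdot)\,dx<\int u_0\,dx$. Combined with $\int\tilde v_n(T_0,\cdot)\to\int\tilde v^*(T_0,\cdot)$ (dominated convergence on the uniformly bounded supports), this contradicts $\int\tilde v_n(T_0,\cdot)\to\int u_0$.

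The main difficulty will be the compactness passage on the moving lateral boundary: because uniform $C^1$ control on $\tilde g_n,\tilde h_n$ near $t=0$ is unavailable, a boundary-barrier argument analogous to the one used in the proof of Lemma~\ref{estderi} is needed to ensure the limit $\tilde v^*$ vanishes on $\{x=\tilde h^*(t)\}\cup\{x=\tilde g^*(t)\}$ for $t>0$, so that Hopf's lemma may be invoked. Once that is handled, $h(T_0)-g(T_0)\to\infty$ as $\mu\to\infty$ follows, and in particular $h(T_0)-g(T_0)\geq 2\overline R$ for every $\mu\geq\mu^0$, completing the reduction to Lemma~\ref{h0hinfty}.
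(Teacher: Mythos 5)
Your overall strategy is sound and in the same spirit as the arguments behind the cited references [dg1, Lemma 2.8] and [dgp, Lemma 3.10]: build a lower solution from the pure-diffusion one-phase Stefan problem, extract a mass identity that ties the total front displacement to the $L^1$ loss, use it to show $h(T_0)-g(T_0)>2\overline{R}$ for $\mu$ large, and conclude by restarting at $T_0$ and invoking Lemma~\ref{h0hinfty}. The lower-solution construction (setting $\tilde u=e^{-K_0t}\tilde v$ with the Stefan coefficient reduced to $\mu e^{-K_0T_0}$) is verified correctly, and so is the mass identity.

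The gap you flag, however, is real and is not merely technical: to apply Hopf's lemma to the subsequential limit $\tilde v^*$ you need $\tilde v^*$ to vanish on a lateral boundary that you only know to be monotone and pointwise-limit continuous, and you further need enough regularity of $\tilde g^*,\tilde h^*$ to justify differentiating $\int\tilde v^*(t,\cdot)\,dx$ and to have an interior paraboloid at boundary points. None of this machinery is needed. Under the contradiction hypothesis that $\tilde h_n(T_0)-\tilde g_n(T_0)\le M'$, the moving domain lies in the fixed interval $I:=(h_0-M',\,g_0+M')$ for all $t\in[0,T_0]$. Let $w$ solve the heat equation on $I$ with homogeneous Dirichlet data and initial value $u_0$ extended by zero; $w$ is independent of $\mu$. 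Since each $\tilde v_n$ vanishes on its own lateral boundary, the comparison principle gives $\tilde v_n\le w$ on the moving domain, hence $\int\tilde v_n(T_0,\cdot)\,dx\le\int_I w(T_0,\cdot)\,dx$. The Dirichlet heat semigroup on $I$ is strictly $L^1$-dissipative, so $\epsilon_0:=\int u_0\,dx-\int_I w(T_0,\cdot)\,dx>0$ is a fixed constant. Plugging into your mass identity yields $\tilde h_n(T_0)-\tilde g_n(T_0)\ge(h_0-g_0)+\tilde\mu_n\epsilon_0/d\to\infty$, contradicting the assumed bound. This replaces the compactness and Hopf step by a single fixed-domain comparison, requires no limit passage, and makes $\mu^0$ explicit.
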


\begin{lem}\label{criteria2}
Suppose that $h_0-g_0<2R^*(y_0)$ with $y_0=(h_0+g_0)/2$. Then there exists $\mu_0>0$ depending on $u_0$ such that vanishing occurs if $\mu\leq \mu_0$. 
\end{lem}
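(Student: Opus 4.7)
The plan is to build, for sufficiently small $\mu$, an exponentially decaying upper barrier from the principal eigenfunction on a slightly enlarged interval, use a Stefan-identity integration trick to bound the total displacement of the free boundary, and thereby force $g_\infty$ and $h_\infty$ to remain finite; the conclusion then follows from Lemma~\ref{vanishing}.

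Since $h_0-g_0<2R^*(y_0)$, and $R\mapsto \lambda_{1,R}^{y_0}$ is continuous and strictly decreasing (Lemma~\ref{proprin}), I fix $R_1\in\big((h_0-g_0)/2,\,R^*(y_0)\big)$, so that $\lambda_1:=\lambda_{1,R_1}^{y_0}>0$. Let $\psi(t,x)$ be the corresponding positive $\omega$-periodic principal eigenfunction of \eqref{localeig} on $\R\times[-R_1,R_1]$, normalized by $\|\psi\|_\infty=1$. Since the compact set $[g_0,h_0]$ lies strictly inside $(y_0-R_1,y_0+R_1)$, $\psi(0,\cdot-y_0)$ is bounded below by a positive constant on $[g_0,h_0]$, so I can choose $M>0$ with $M\psi(0,x-y_0)\ge u_0(x)$ for $x\in[g_0,h_0]$, and pick $\gamma\in(0,\lambda_1)$. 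Then I define the barrier
\[
\bar u(t,x):=Me^{-\gamma t}\psi(t,x-y_0),\qquad (t,x)\in\R_+\times[y_0-R_1,y_0+R_1].
\]
Using the eigenvalue equation for $\psi$ and the inequality $f(t,x,u)\le \partial_u f(t,x,0)\,u$ for $u>0$ (a direct consequence of \eqref{hyp1} together with $f(t,x,0)=0$ and the $C^1$-in-$u$ hypothesis), one verifies $\bar u_t-d\bar u_{xx}-f(t,x,\bar u)\ge(\lambda_1-\gamma)\bar u\ge0$. Let $T^*:=\sup\{T>0:[g(t),h(t)]\subset(y_0-R_1,y_0+R_1)\ \forall\,t\in[0,T]\}>0$. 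On $G_{T^*}$, since $\bar u(0,\cdot)\ge u_0$ on $[g_0,h_0]$ and $\bar u\ge 0=u$ on the lateral boundary $\{x=g(t)\}\cup\{x=h(t)\}$, the standard parabolic comparison principle applied to $w:=u-\bar u$ yields
\[
u(t,x)\le \bar u(t,x)\le Me^{-\gamma t}\qquad\text{for all }(t,x)\in G_{T^*}.
\]

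Next, I integrate the equation $u_t=du_{xx}+f(t,x,u)$ in $x$ over $[g(t),h(t)]$. Using $u(t,g(t))=u(t,h(t))=0$ and the Stefan conditions $u_x(t,g(t))=-g'(t)/\mu$, $u_x(t,h(t))=-h'(t)/\mu$, Leibniz's rule yields
\[
\frac{d}{dt}\!\left(\int_{g(t)}^{h(t)}\!u\,dx+\frac{d}{\mu}\big(h(t)-g(t)\big)\right)=\int_{g(t)}^{h(t)}\!f(t,x,u)\,dx.
\]
Integrating from $0$ to $t\le T^*$ and using $u\le Me^{-\gamma s}$ together with $|f(s,x,u)|\le K'u$ (valid on bounded $u$-ranges, by $f(\cdot,\cdot,0)\equiv 0$ and the $C^1$ hypothesis), one obtains
\[
h(t)-g(t)\le (h_0-g_0)+\frac{\mu}{d}\,K_0,
\]
where $K_0:=(h_0-g_0)\|u_0\|_\infty+2R_1M+2R_1K'M/\gamma$ depends on $u_0,R_1,M,\gamma,K'$ but not on $\mu$ or $t$.

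Now I choose $\mu_0>0$ so small that $(\mu_0/d)K_0<R_1-(h_0-g_0)/2$. If $T^*<\infty$, then by continuity one of $h(T^*)=y_0+R_1$ or $g(T^*)=y_0-R_1$ holds; WLOG take the former. Since $g$ is decreasing we have $g(T^*)\le g_0$, whence $h(T^*)-g(T^*)\ge R_1+(h_0-g_0)/2$, which contradicts the integral bound for any $\mu\le\mu_0$. Hence $T^*=\infty$, so $y_0-R_1\le g_\infty$ and $h_\infty\le y_0+R_1$ are both finite, and Lemma~\ref{vanishing} concludes that vanishing occurs. The main technical delicacies are (i) the parabolic comparison between the moving-domain solution $u$ and the fixed-domain barrier $\bar u$ — which works cleanly precisely because $\bar u>0$ on its own (larger) boundary while $u$ vanishes on the moving boundary — and (ii) the integration identity above, in which the Stefan conditions convert the diffusion flux at the moving endpoints into the $(d/\mu)$-weighted length term; this quantitative $\mu$-dependence is exactly what makes the smallness threshold $\mu_0$ possible.
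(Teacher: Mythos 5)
Your argument is correct, and it takes a route genuinely different from the one the paper cites. The cited proofs (\cite[Lemma~2.8]{dg1}, \cite[Lemma~3.10]{dgp}) build a supersolution $(\bar u,\bar g,\bar h)$ of the \emph{free boundary problem itself}, with $\bar h(t)$ slowly increasing and the reverse Stefan inequalities $\bar h'(t)\geq -\mu\bar u_x(t,\bar h(t))$ (and its analogue at $\bar g$) holding once $\mu$ is small; Proposition~\ref{comparison}(i) then confines $(g,h)$. You instead decouple the two tasks. First, the $\omega$-periodic principal eigenfunction on the enlarged fixed interval $[y_0-R_1,y_0+R_1]$, decaying like $e^{-\gamma t}$ with $\gamma\in(0,\lambda_{1,R_1}^{y_0})$, dominates $u$ up to the exit time $T^*$ by the ordinary fixed-domain parabolic comparison (the Stefan inequalities are irrelevant here precisely because $u$ vanishes on its own moving boundary while the barrier remains positive there, and the inequality $f(t,x,u)\le\partial_uf(t,x,0)u$ from \eqref{hyp1} and \eqref{zero} makes the barrier a supersolution). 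Second, the conservation identity obtained by integrating $u_t=du_{xx}+f$ over $[g(t),h(t)]$ and invoking the Stefan conditions bounds $h(t)-g(t)-(h_0-g_0)$ by $\frac{\mu}{d}K_0$ with $K_0$ independent of $\mu$ and of $t\le T^*$, which rules out $T^*<\infty$ once $\mu$ is small; Lemma~\ref{vanishing} finishes. The gain is modularity: there is no need to design a moving-domain supersolution meeting the Stefan differential inequalities, and the $\mu$-smallness enters through the mass identity alone, where the Stefan conditions convert the boundary flux into the $(d/\mu)$-weighted interval length. Two minor remarks: the $2R_1M$ term in your $K_0$ is superfluous, since after integrating the identity the term $-\int_{g(t)}^{h(t)}u(t,x)\,dx$ on the left has a favorable sign and may be dropped, and the constant $K$ of \eqref{globalb} can serve in place of $K'$; and because $u_0$ is only continuous, the integral identity should be obtained by integrating from $\tau>0$ to $t$ and letting $\tau\to 0$ using \eqref{coninuousgh} and \eqref{continuousu} from Lemma~\ref{ini-cond}.
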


Based on the above two lemmas, the proof for part (ii) of Theorem~\ref{spva2} follows exactly the same arguments as those used in the proof of \cite[Theorem 2.10]{dg1}.

\end{document}